\numberwithin{equation}{section}
\def\C{{\mathbb C}}
\def\R{{\mathbb R}}
\def\N{{\mathbb N}}
\def\le{\leqslant}
\def\ge{\geqslant}
\newcommand{\re}{\mathrm{Re}}
\newcommand{\im}{\mathrm{Im}}
\newcommand{\conj}[1]{\overline{#1}}
\newcommand{\pare}[1]{\left(#1\right)}
\theoremstyle{plain}
\newtheorem{theorem}{Theorem}[section]
\newtheorem*{theorem*}{Theorem}
\newtheorem{lemma}[theorem]{Lemma}
\newtheorem{corollary}[theorem]{Corollary}
\newtheorem{proposition}[theorem]{Proposition}
\theoremstyle{definition}
\newtheorem{remark}[theorem]{Remark}
\newtheorem*{remark*}{Remark}
\begin{document}
\title[Optimal bilinear control of Gross--Pitaevskii equations]
{Optimal bilinear control of Gross--Pitaevskii equations}
\author[M.\ Hinterm\"uller]{Michael Hinterm\"uller}
\address[M.\ Hinterm\"uller]{Department of Mathematics, 
Humboldt--Universit\"at zu Berlin,
Unter den Linden 6,
D--10099 Berlin, Germany}
\email{hint@mathematik.hu-berlin.de}
\author[D.\ Marahrens]{Daniel Marahrens}
\address[D.\ Marahrens]{Department of Applied Mathematics and Theoretical
Physics\\
CMS, Wilberforce Road\\ Cambridge CB3 0WA\\ England}
\email{d.o.j.marahrens@damtp.cam.ac.uk}
\author[P.\ A.\ Markowich]{Peter A.\ Markowich}
\address[P.\ A.\ Markowich]{Department of Applied Mathematics and Theoretical
Physics\\
CMS, Wilberforce Road\\ Cambridge CB3 0WA\\ England}
\email{p.a.markowich@damtp.cam.ac.uk}
\author[C.\ Sparber]{Christof Sparber}
\address[C.\ Sparber]{Department of Mathematics, Statistics, and Computer Science,
University of Illinois at Chicago,
851 South Morgan Street
Chicago, Illinois 60607, USA}
\email{sparber@uic.edu}

\begin{abstract}
A mathematical framework for optimal bilinear control of nonlinear Schr\"odinger equations of Gross--Pitaevskii type arising in the description of Bose--Einstein condensates is presented. The obtained results generalize earlier efforts found in the literature in several aspects. In particular, the cost induced by the physical work load over the control process is taken into account rather then often used $L^2$- or $H^1$-norms for the cost of the control action. 
Well-posedness of the problem and existence of an optimal control is proven. In addition, the first order optimality system is rigorously derived. 
Also a numerical solution method is proposed, which is based on a Newton type iteration, and used to solve several coherent quantum control problems.
\end{abstract}

\date{\today}

\subjclass[2010]{49J20, 81Q93, 49J50}
\keywords{quantum control, bilinear optimal control problem, nonlinear Schr\"odinger equation, Bose Einstein condensate, Newton's method, MINRES algorithm, work induced by control}

\thanks{This publication is based on work supported by Award No.\ KUK-I1-007-43, funded by the King Abdullah University of Science and Technology (KAUST). M.H. also acknowledges support through the START-Project Y305 ''Interfaces and Free Boundaries'' administered by the Austrian Science Fund FWF, the German Science Fund DFG under SPP1253 "Optimization with PDE Constraints'' and the DFG-Research Center {\sc MATHEON} under projects C28 and C31.}
\maketitle

\section{Introduction}\label{sec:intro}

\subsection{Physics background} Ever since the first experimental realization of \emph{Bose--Einstein condensates} (BECs) in 1995, the possibility to store, manipulate, and 
measure a single quantum system with extremely high precision has provided great stimulus in many fields of physical and mathematical research, 
among them {\it quantum control theory}. In the regime of dilute gases, a BEC, consisting of $N$ particles, can be modeled by the 
 \emph{Gross--Pitaevskii equation} \cite{PiSt}, i.e.\ a cubically nonlinear Schr\"odinger equation (NLS) of the form
\[
 i \hbar \partial_t \psi = -\frac{\hbar^2}{2m}\Delta \psi + U(x)\psi + N g |\psi|^{2}\psi + W(t, x) \psi, \quad x\in \R^3, t\in \R,
\]
with $m$ denoting the mass of the particles, $\hbar$ Planck's constant, $g = 4\pi \hbar^2 a_{\rm sc}/m$, and $a_{\rm sc}\in \R$ their characteristic scattering length, describing the inter-particle collisions. 
The function $U(x)$ describes an external trapping potential which is necessary for the experimental realization of a BEC. Typically, $U(x)$ is assumed to be a {\it harmonic confinement}. 
In situations where $U(x)$ is strongly anisotropic, one experimentally obtains a quasi one-dimensional (``cigar-shaped"), or quasi two-dimensional (``pancake shaped") BEC, see for instance \cite{KNSQ}. 
In the following, we shall assume $U(x)$ to be {\it fixed}. The condensate is consequently manipulated via a time-dependent \emph{control potential} ${W} (t,x)$, which 
we shall assume to be of the following form: $${W}(t,x) = \alpha(t) V(x),$$
Here, $\alpha(t)$ denotes the {\it control parameter} (typically, a switching function acting within a certain time-interval $[0,T]$) and $V(x)$ is a given potential. 
In our context, the potential $V(x)$ models the spatial profile of a laser field used to manipulate the BEC and $\alpha(t)$ its intensity.

The problem of quantum control, i.e. the coherent manipulation of quantum systems (in particular Bose--Einstein condensates) 
via external potentials $W(t,x)$, has attracted considerable interest in the physics literature, cf.\ \cite{BVR, CB, schm, Ho, PDR, RKP, ZR}.
From the mathematical point of view, quantum control problems are a specific example of \emph{bilinear control systems} \cite{Cor}. 
It is known that linear or nonlinear Schr\"odinger--type equations are in general \emph{not exactly controllable} in, say, $L^2(\R^3)$, cf.\ \cite{Tu}. 
Similarly, approximate controllability is known to hold for only some specific systems, such as \cite{MR}. 
More recently, however, sufficient conditions for approximate controllability of linear Schr\"odinger equations with purely discrete spectrum have been derived in \cite{CMSB}. 
In \cite{MS} these conditions have been shown to be generically satisfied, but, to the best of our knowledge, a generalization to the case of nonlinear Schr\"odinger equations is still lacking. 

The goal of the current paper is to consider quantum control systems within the 
framework of \emph{optimal control}, cf.\ \cite{WG} for a general introduction, from a partial differential equation constrained point of view. 
The objective of the control process is thereby quantified through an objective functional $J=J(\psi, \alpha)$, which is minimized subject to the condition that the time-evolution of the 
quantum state is governed by the Gross--Pitaevskii equation. Such objective functionals $J(\psi, \alpha)$ usually consist of two parts, one being the desired physical quantity 
(observable) to be minimized, the other one describing the cost it takes to obtain the desired outcome through the control process. In quantum mechanics, the wave function 
$\psi (t,\cdot)$ itself is not a physical observable. Rather, one considers self-adjoint linear operators $A$ acting on $\psi(t,\cdot)$ and 
aims for a prescribed \emph{expectation value} of $A$ at time $t=T>0$, the final time of the control process. 
Such expectation values are computed by taking the $L^2$--inner product $\langle \psi(T,\cdot), A \psi(T,\cdot) \rangle_{L^2(\R^d)}$. 
Note that this implies that the corresponding $\psi(t,\cdot)$ is only determined up to a constant phase. This fact makes quantum control less  ``rigid" when 
compared to classical control problems in which one usually aims to optimize for a prescribed target state.

There are many possible ways of modeling the cost it takes to reach a certain prescribed expectation value. The corresponding cost terms within $J(\psi, \alpha)$ are often given 
by the norm of the control $\alpha(t)$ in some function space. Typical choices are $L^2(0,T)$ or $H^1(0,T)$. 
However, these choices of function spaces for $\alpha(t)$ often lack a clear physical interpretation. In addition, cost terms based on, say, the $L^2$--norm of $\alpha$ tend to yield 
highly oscillatory optimal controls due to the oscillatory nature of the underlying (nonlinear) Schr\"odinger equation. The same is true for quantum control via so-called Lyapunov tracking methods, see, e.g., \cite{CGLT}.
In the present work we shall present a novel choice for the cost term, which is based on the corresponding 
physical work performed throughout the control process. 

We continue this introductory section by describing the mathematical setting in more detail.

\subsection{Mathematical setting}
We consider a quantum mechanical system described by a wave function $\psi(t,\cdot) \in L^2(\R^d)$ within $d=1,2,3$ spatial dimensions. The case $d=1,2$ models 
the effective dynamics within strongly anisotropic potentials (resulting in a quasi one or two-dimensional BEC).
The time-evolution of $\psi(t,\cdot)$ is governed by the following generalized Gross--Pitaevskii equation (rescaled into dimensionless form):
\begin{equation}\label{eq:schroed}
 i \partial_t \psi = -\frac{1}{2}\Delta \psi  + U(x) \psi+ \lambda |\psi|^{2\sigma}\psi + \alpha(t) V(x) \psi, \quad x\in \R^d, t \in \R,
\end{equation}
with $\lambda \ge 0$, $\sigma<2/(d-2)$, and subject to initial data
\[
 \psi(0,\cdot) = \psi_0\in L^2(\R^d), \quad \alpha(0) = \alpha_0\in \R.
\]
For physical reasons we normalize $\| \psi_0 \|_{L^2(\R^d)} = 1$, which is henceforth preserved by the time-evolution of \eqref{eq:schroed}. 
In addition, the control potential is assumed to be $V\in W^{1,\infty}(\R^d)$, whereas for $U(x)$ we require
\[
 U\in C^\infty(\R^d) \text{ such that } \partial^k U \in L^\infty(\R^d) \text{ for all multi-indices $k$ with } |k|\ge 2.
\]
In other words, the external potential is assumed to be smooth and \emph{subquadratic}.  One of the most important examples is the harmonic oscillator $U(x) = \frac{1}{2}{|x|}^2$.
Due to the presence of a subquadratic potential, we restrict ourselves to initial data $\psi_0$ in the \emph{energy space}
\begin{equation}
\label{eq:en_space}
 \Sigma: = \left\{ \psi\in H^1(\R^d) \ : \ x\psi \in L^2(\R^d) \right\}.
\end{equation}
In particular, this definition guarantees that the quantum mechanical energy functional
\begin{equation}\label{eq:energy}
 E(t) = \int_{\R^d} \frac{1}{2} |\nabla \psi(t,x)|^2 + \frac{\lambda}{\sigma+1} |\psi(t,x)|^{2\sigma+2} + (\alpha(t) V(x) + U(x)) |\psi(t,x)|^2 dx
\end{equation}
associated to~\eqref{eq:schroed}  is well defined.

\begin{remark} Note that $\sigma<2/(d-2)$ allows for general power law nonlinearities in dimensions $d=1,2$, whereas in $d=3$ the nonlinearity is 
assumed to be less than quintic. From the physics point of view a cubic nonlinearity $\sigma =1$ is the most natural choice, but higher order nonlinearities also 
arise in systems with more complicated inter-particle interactions, in particular in lower dimensions; compare \cite{KNSQ}. 
From the mathematical point of view, it is well known that the restriction $\sigma<2/(d-2)$ guarantees well-posedness of the initial value problem in the energy space $\Sigma$; see \cite{car09, caze}. 
In addition, the condition $\lambda\ge0$ (defocusing nonlinearity) guarantees the existence of global in-time solutions to \eqref{eq:schroed}; see \cite{car09}. Hence, we do not encounter the problem of 
finite-time blow-up in our work.
\end{remark}

Although \eqref{eq:schroed} conserves mass, i.e.\ $\|\psi(t,\cdot)\|_{L^2(\R^d)} = \|\psi_0\|_{L^2(\R^d)}$ for all $t\in \R$, the energy $E(t)$ is not conserved. This is in contrast to the case of time-independent potentials. In our case, rather one finds that 
\begin{equation}\label{eq:Ederiv}
 \frac{d}{dt} E(t) = \dot{\alpha}(t) \int_{\R^d} V(x) |\psi(t,x)|^2 dx.
\end{equation}
The \emph{physical work} performed by the system within a given time-interval $[0,T]$ is therefore equal to
\begin{equation}\label{eq:work}
E(T) - E(0) = \int_0^T \dot{\alpha}(t) \int_{\R^d} V(x) |\psi(t,x)|^2 dx \, dt.
\end{equation}
Thus a control $\alpha(t)$ acting for $t\in [0,T]$ upon a system described by \eqref{eq:schroed} requires a certain amount of energy, which is given by \eqref{eq:work}. 
It, thus, seems natural to include such a term in the cost functional of our problem in order to quantify the control action. 

Indeed, for any given final control time $T>0$, and parameters $\gamma_1\ge0, \gamma_2>0$, we define the following \emph{objective functional}:
\begin{equation}\label{eq:J1}
 J(\psi,\alpha) :=  \langle \psi(T,\cdot), A \psi(T,\cdot) \rangle_{L^2(\R^d)}^2 + \gamma_1 \int_0^T (\dot{E}(t))^2 \;dt + \gamma_2 \int_0^T (\dot{\alpha}(t))^2 \;dt,
\end{equation}
where $A: \Sigma \to L^2(\R^d)$ is a bounded linear operator which is assumed to be essentially self-adjoint on $L^2(\R^d)$. In other words, $A$ represents a physical observable with 
$\text{spec} \,(A)\subseteq \R$. A typical choice for $A$ would be $A = A' - a$ where $a\in \R$ is some prescribed expectation value for the observable $A'$ in the state $\psi(T,x)$. 
For example, if $a\in \text{spec}\, (A')$ is chosen to be an eigenvalue of $A'$, the first term in $J(\psi, \alpha)$ is zero as soon as the target state $\psi(T,\cdot)$ is, up to a phase factor, given by 
an associated eigenfunction of $A'$. 
However, one may consider choosing $a\in \R$ such that it ``forces" the functional to equidistribute between, say, two eigenfunctions. 
\begin{remark} We also remark that in the case $A=P_\varphi - 1$, where $P_\varphi$ denotes the orthogonal projection onto a given \emph{target state} $\varphi\in L^2(\R^d)$, the first term on the right hand side of \eqref{eq:J1} reads 
\begin{equation}\label{eq:AP}  \langle \psi(T,\cdot), A \psi(T,\cdot) \rangle_{L^2(\R^d)} =  \big|\langle \psi(T,\cdot), \varphi(\cdot)\rangle_{L^2(\R^d)} \big|^2 - 1 ,
\end{equation}
using the fact that $\| \psi(T,\cdot) \|_{L^2(\R^d)} = 1$. Expression \eqref{eq:AP} is the same as used in recent works in the physics literature; see \cite{schm}. \end{remark}

Using \eqref{eq:Ederiv}, we find that the objective functional $J(\psi, \alpha)$ explicitly reads
\begin{align}\label{eq:J2}
\begin{split}
 J(\psi,\alpha) := &\ \langle \psi(T,\cdot), A \psi(T,\cdot) \rangle_{L^2(\R^d)}^2 \\
 & + \gamma_1 \int_0^T (\dot{\alpha}(t))^2 \left (\int_{\R^d} V(x) |\psi(t,x)|^2 \;dx \right)^2\; dt + \gamma_2 \int_0^T (\dot{\alpha}(t))^2 \;dt.
\end{split}
\end{align}
Here, the second line on the right hand side displays two cost (or penalization) terms for the control: 
The first one, involving $\gamma_1\ge 0$, is given by the square of the physical work, i.e.\ the right hand side of \eqref{eq:Ederiv}. 
The second is a classical cost term as used in \cite{schm}. In our case, the second term is required as a mathematical \emph{regularization} 
of the optimal control problem, since for general (sign changing) potentials $V\in L^\infty(\R^d)$ the weight factor 
\begin{equation}\label{eq:weight} \omega(t):= \int_{\R^d} V(x) |\psi(t,x)|^2 dx
\end{equation} might vanish for some $t\in \R$. 
In such a situation, the boundedness of variations of $\alpha(t)$ is in jeopardy and the optimal control problem lacks well-posedness. Hence, we require $\gamma_2 >0$ for our mathematical analysis, but typically take $\gamma_2 \ll  \gamma_1$ in our numerics
in Section \ref{sec:num} to keep its influence small. Note, however, that in the case where 
the control potential satisfies the positivity condition $$V(x)\ge \delta >0 \;\; \forall x \in \R^d,$$ 
we may choose $\gamma_2=0$ and all of our results remain valid.

\begin{remark}
In situations where the above positivity condition on $V(x)$ does not hold, one might think of performing a time-dependent \emph{gauge transform} of $\psi$, i.e.\
\[
\tilde{\psi}(t,x) = \exp{\left ( - i \kappa \int_0^t \alpha(s) \;ds \right)} \psi(t,x), 
\]
with a constant $\kappa>\text{min}_{x\in \R^d} V(x)$, assuming that the minimum exists. This yields a Gross--Pitaevskii equation for the wave function $\tilde \psi$ with 
modified control potential $\tilde V(x)= (\kappa +V(x))  > 0 $ for all $x \in \R^d$.
Note, however, that this gauge transform leaves the expression \eqref{eq:J2} unchanged and hence does not improve the stuation.
Only if one also changes the potential $V(x)$ within $J(\psi, \alpha)$ into $\tilde V(x)$, the problem does not require any regularization term (proportional to $\gamma_2$). Note, however, that such a modification yields a 
control system which is no longer (mathematically) equivalent to the original problem. In fact, replacing $V(x)$ by $\tilde V(x)$ in the objective functional $J(\psi,\alpha)$ corresponds to increasing the parameter $\gamma_2$ by $\kappa$.
\end{remark}

\subsection{Relation to other works and organization of the paper}

The mathematical research field of optimal bilinear control of systems governed by partial differential equations is by now classical, cf.\ \cite{F, Li} for a general overview. 
Surprisingly, rigorous mathematical work on optimal (bilinear) control of quantum systems appears very limited, despite the physical significance of the involved applications (cf.\ the references given above). Results on simplified situations, as, e.g., for finite dimensional quantum systems, can be found in \cite{BCG} (see also the references therein). More recently, optimal control problems for linear Schr\"odinger equations have been studied in \cite{BKP, BSS, IK}. In addition, numerical questions related to quantum control are studied in \cite{BS, YKY}. 
Among these papers, the work in \cite{IK} appears closest to our effort. Indeed, in \cite{IK}, the authors provide a framework for bilinear optimal control of 
abstract (linear) Schr\"odinger equations. The considered objective functional involves a cost term proportional to the $L^2$--norm of the control parameter $\alpha(t)$. 
The present work goes beyond the results obtained in \cite{IK} in several repects: First, we generalize the cost functional 
to account for oscillations in $\alpha(t)$ and in particular for the physical work load performed throughout the control process. In addition, we allow for observables $A$ which are unbounded operators on $L^2$.
Second, we consider nonlinear Schr\"odinger equations of Gross--Pitaevskii type, including unbounded (subquadratic) potentials, which are highly significant in the quantum control of BECs.
This type of equation makes the study of the associated control problem considerably more involved from a mathematical point of view.

The rest of this work is organized as follows. 
In section~\ref{sec:existence} we clarify existence of a minimizer for our control problem. In particular, we prove that the corresponding optimal solution $\psi_*(t,x)$ is indeed a mild (and not only a weak) solution of \eqref{eq:schroed}, depending continuously 
on the initial data $\psi_0$. Then, in section~\ref{sec:adjoint} the adjoint equation is derived and analyzed with respect to existence and uniqueness of a solution. It is our primary tool for the description of the derivative of the objective function reduced onto the control space through considering the solution of the Gross-Pitaevskii equation as a function of the control variable $\alpha$. The results of section~\ref{sec:adjoint} are paramount for the derivation of the first order optimality system in section~\ref{sec:derivative}. In section~\ref{sec:num} a gradient- and a Newton-type descent method are defined, respectively, and then used for computing numerical solutions for several illustrative quantum control problems. In particular, we consider the optimal shifting of a linear wave package, splitting of a linear wave package and splitting of a BEC. The paper ends with conclusions on our findings in section~\ref{sec:concl}.

{\bf Notation.} Throughout this work we shall denote strong convergence of a sequence $(x_n)_{n\in\N}$ by $x_n\rightarrow x$ and weak convergence by $x_n\rightharpoonup x$. 
For simplicity, we shall often write $\psi(t)\equiv \psi(t, \cdot)$ and also use the shorthand notation 
$L^p_t L^q_x$ instead of $L^p(0,T;L^q(\R^d))$. Similarly, $H^{1}_t$ stands for $H^1(0,T)$, with dual $H^{-1}_t \equiv (H^1(0,T))^*$.

\section{Existence of minimizers}\label{sec:existence}

We start by specifying the basic functional analytic framework. For any given $T>0$, we consider $ H^1(0,T)$ as the real vector space of control parameters $\alpha(t)\in \R$.
It is known \cite{caze} that for every $\alpha\in H^1(0,T)$, there exists a unique {\it mild solution} $\psi \in C([0,T]; \Sigma)$ of the Gross-Pitaevskii equation. More precisely, $\psi$ solves
\begin{equation*}
 \psi(t,x) = S(t)\psi_0(x) - i \int_0^t S(t-s) \left( \lambda |\psi(s,\cdot)|^{2\sigma}\psi(s,\cdot) + \alpha(s) V \psi(s,\cdot) \right)(x) ds,
\end{equation*}
where from now on we denote by
\begin{equation}\label{eq:schg} 
S(t) =  e^{ -itH} , \quad H = -\frac{1}{2} \Delta + U(x),
\end{equation}
the group of unitary operators $\{S(t)\}_{t\in \R}$ generated by the Hamiltonian $H$. In other words, $S(t)$ describes the time-evolution of the linear, uncontrolled system. 
Next, we define
\begin{equation}
\label{eq:W_0(T)}
\Upsilon(0,T) := L^2(0,T;\Sigma)\cap H^1(0,T;\Sigma^*),
\end{equation}
where $\Sigma^*$ is the dual of the energy space $\Sigma$. Then the appropriate space for our minimization problem is
\[
\Lambda(0,T):= \{ (\psi, \alpha) \in \Upsilon(0,T) \times H^1(0,T) \ \mbox{: $\psi$ is a mild solution of  \eqref{eq:schroed} }\}.
\]
Since the control $\alpha$ is real-valued, it is natural to consider $\Lambda(0,T)$ as a \emph{real vector space} 
and we shall henceforth equip $L^2(\R^d)$ with the scalar product
\begin{equation}
\label{eq:inner_prod}
 \langle \xi, \psi \rangle_{L^2(\R^d)} = \re \int_{\R^d} \xi(x) \conj{\psi(x)} \, dx,
\end{equation}
which is subsequently inherited by all $L^2$-based Sobolev spaces. (Note that 
this choice is also used in \cite{caze}.) From what is said above, we infer that the space $\Lambda(0,T)$ is indeed nonempty. 

With these definitions at hand, the optimal control problem under investigation is to find
\begin{equation}\label{eq:minprob}
 J_* = \inf_{(\psi,\alpha)\in \Lambda(0,T)} J(\psi,\alpha).
 \end{equation}
We are now in the position to state the first main result of this work.
\begin{theorem}
\label{thm:ex_min}
Let $\lambda\ge 0$, $0<\sigma<2/(d-2)$, $V\in W^{1,\infty}(\R^d)$, and $U\in C^\infty(\R^d)$ be subquadratic. Then, for any $T>0$, any initial data $\psi_0 \in \Sigma$, $\alpha_0 \in \R$ and any choice of parameters $\gamma_1\ge0$, $\gamma_2>0$ 
the optimal control problem~\eqref{eq:minprob} has a minimizer $(\psi_*, \alpha_*)\in \Lambda(0,T)$.
\end{theorem}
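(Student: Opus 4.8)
The plan is to argue by the direct method of the calculus of variations. Since $J(\psi,\alpha)\ge 0$ for every admissible pair, the infimum $J_*$ in \eqref{eq:minprob} is finite, and I may fix a minimizing sequence $(\psi_n,\alpha_n)\in\Lambda(0,T)$ with $J(\psi_n,\alpha_n)\to J_*$. The first step is to extract uniform a priori bounds on the controls. Because $\gamma_2>0$, the last term in \eqref{eq:J2} forces $(\dot\alpha_n)$ to be bounded in $L^2(0,T)$; together with the fixed initial value $\alpha_n(0)=\alpha_0$ this yields a uniform bound on $(\alpha_n)$ in $H^1(0,T)$. Passing to a subsequence (not relabeled), I obtain $\alpha_n\rightharpoonup\alpha_*$ weakly in $H^1(0,T)$, and by the compact Rellich embedding $H^1(0,T)\hookrightarrow\hookrightarrow C([0,T])$ the convergence $\alpha_n\to\alpha_*$ is uniform on $[0,T]$; in particular $\alpha_*(0)=\alpha_0$ and $\sup_n\|\alpha_n\|_{L^\infty(0,T)}<\infty$.

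Next I would obtain compactness for the states. Using mass conservation $\|\psi_n(t)\|_{L^2(\R^d)}=1$, the uniform $L^\infty$-bound on $\alpha_n$, and the energy identity \eqref{eq:Ederiv}, the well-posedness and energy estimates for subquadratic potentials give a uniform bound for $(\psi_n)$ in $L^\infty(0,T;\Sigma)$. Reading \eqref{eq:schroed} in the dual space $\Sigma^*$ and estimating the nonlinearity by Gagliardo--Nirenberg (admissible since $\sigma<2/(d-2)$) then bounds $(\partial_t\psi_n)$ uniformly in $L^\infty(0,T;\Sigma^*)$. The decisive structural fact here is the \emph{compact} embedding $\Sigma\hookrightarrow\hookrightarrow L^2(\R^d)$, which holds precisely because the confinement $x\psi\in L^2$ in \eqref{eq:en_space} forces decay at infinity. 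With $\Sigma\hookrightarrow\hookrightarrow L^2(\R^d)\hookrightarrow\Sigma^*$, the Aubin--Lions--Simon lemma produces a further subsequence with $\psi_n\to\psi_*$ strongly in $C([0,T];L^2(\R^d))$, and interpolating this against the uniform $\Sigma$-bound upgrades it to strong convergence in $L^2(0,T;L^{2\sigma+2}(\R^d))$.

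Armed with this strong convergence I would pass to the limit in the Duhamel formulation. The strong $L^2_tL^{2\sigma+2}_x$ convergence handles the nonlinear term $\lambda|\psi_n|^{2\sigma}\psi_n$, while the uniform convergence $\alpha_n\to\alpha_*$ together with the strong $L^2_tL^2_x$ convergence of $\psi_n$ handles the linear term $\alpha_n V\psi_n$; continuity of the unitary group $S(t)$ from \eqref{eq:schg} then shows that $\psi_*$ satisfies the integral equation defining a mild solution for the control $\alpha_*$. By the uniqueness part of the well-posedness theory, $\psi_*$ is \emph{the} mild solution associated with $\alpha_*$, so $(\psi_*,\alpha_*)\in\Lambda(0,T)$ is admissible. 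This is the point at which it is essential to have strong, rather than merely weak, convergence.

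It remains to establish lower semicontinuity of $J$ along the sequence. For the observable term, the $C([0,T];L^2)$-convergence gives $\psi_n(T)\to\psi_*(T)$ in $L^2(\R^d)$ and, by the uniform $\Sigma$-bound, $\psi_n(T)\rightharpoonup\psi_*(T)$ weakly in $\Sigma$; since $A\colon\Sigma\to L^2(\R^d)$ is bounded, hence weakly continuous, $A\psi_n(T)\rightharpoonup A\psi_*(T)$ in $L^2$, and the weak--strong pairing yields $\langle\psi_n(T),A\psi_n(T)\rangle\to\langle\psi_*(T),A\psi_*(T)\rangle$, so the squared term passes to the limit. For the two cost terms I would use weak lower semicontinuity of the $L^2(0,T)$-norm: the $\gamma_2$-term is immediate from $\dot\alpha_n\rightharpoonup\dot\alpha_*$. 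For the $\gamma_1$-term, the weight $\omega_n(t)=\int_{\R^d}V|\psi_n(t)|^2\,dx$ from \eqref{eq:weight} converges to $\omega_*$ uniformly on $[0,T]$ (using $V\in L^\infty$ and the $C([0,T];L^2)$-convergence), so $\omega_n\dot\alpha_n\rightharpoonup\omega_*\dot\alpha_*$ weakly in $L^2(0,T)$ by a weak--strong argument, and weak lower semicontinuity of the norm gives $\liminf_n\int_0^T(\dot\alpha_n\omega_n)^2\,dt\ge\int_0^T(\dot\alpha_*\omega_*)^2\,dt$. Combining these, $J(\psi_*,\alpha_*)\le\liminf_n J(\psi_n,\alpha_n)=J_*$, and since $(\psi_*,\alpha_*)$ is admissible this forces equality, proving it to be a minimizer. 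I expect the main obstacle to lie in the compactness argument of the second paragraph: securing the uniform $\Sigma$-bound from the energy estimates for a subquadratic (unbounded) potential, and exploiting the compact embedding $\Sigma\hookrightarrow\hookrightarrow L^2(\R^d)$ to upgrade the weak bounds into the strong convergence needed both to identify $\psi_*$ as a genuine mild solution and to pass to the limit in the work-type cost term.
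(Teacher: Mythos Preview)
Your argument is correct and follows the same direct-method strategy as the paper: extract uniform $H^1(0,T)$-bounds on the controls from the $\gamma_2$-term, parlay these via the energy identity into uniform $L^\infty(0,T;\Sigma)$-bounds on the states, use Aubin--Lions compactness (with the key compact embedding $\Sigma\hookrightarrow\hookrightarrow L^2(\R^d)$), pass to the limit in Duhamel's formula, and conclude by weak lower semicontinuity of $J$.

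There are two noteworthy differences in execution. First, you invoke Aubin--Lions--Simon to obtain $\psi_n\to\psi_*$ directly in $C([0,T];L^2(\R^d))$, whereas the paper only extracts $L^2((0,T)\times\R^d)$-convergence and pointwise-a.e.\ (in $t$) $L^2_x$-convergence; the paper then has to revisit the endpoint $t=T$ after establishing that $\psi_*$ is a genuine mild solution. Your route is a little cleaner. Second, and relatedly, your treatment of the $\gamma_1$-term is more direct: because you have uniform convergence $\omega_n\to\omega_*$ on $[0,T]$, the weak--strong product argument $\omega_n\dot\alpha_n\rightharpoonup\omega_*\dot\alpha_*$ in $L^2(0,T)$ is available, and weak lower semicontinuity of the norm finishes it. The paper, lacking the $C_tL^2_x$-convergence at that stage, instead splits the integrand and appeals to Fatou's lemma. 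Conversely, the paper is more careful than your sketch about passing to the limit in the nonlinear Duhamel term: it pairs against a test function $\chi$, transfers $S(t-s)$ onto $\chi$, and proves a separate lemma bounding $\tilde\chi(s)=S(s-t)\chi$ in $L^\infty_t L^{2\sigma+2}_x$; your phrase ``continuity of the unitary group'' glosses over the fact that $|\psi_n|^{2\sigma}\psi_n$ need not lie in $L^2_x$ for the full range $\sigma<2/(d-2)$, so some such device (or Strichartz estimates) is needed there.
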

The proof of this theorem will be split into three steps: In subsection \ref{sec:conv} we shall first prove a convergence result for minimizing, or more precisely, infimizing sequences. We consequently deduce in subsection~\ref{sec:mild} 
that the obtained limit $\psi_*$ is indeed a mild solution of \eqref{eq:schroed}. Finally, we shall prove lower semicontinuity of $J(\psi, \alpha)$ with respect to the convergence obtained before.

\subsection{Convergence of infimizing sequences}\label{sec:conv}

First note that there exists at least one infimizing sequence with an infimum $-\infty \le J_* < +\infty$, since $\Lambda(0,T)\neq \emptyset$ and $J:\Lambda(0,T)\rightarrow \R$. Then we have the following result for any infimizing sequence.
\begin{proposition}
\label{prop:min_limit}
Let $(\psi_n,\alpha_n)_{n\in\N}$ be an infimizing sequence of the optimal control problem given by~\eqref{eq:J1}. Then under the assumptions of Theorem~\ref{thm:ex_min} there exist a subsequence, still denoted by $(\psi_n,\alpha_n)_{n\in\N}$, and functions $\alpha_* \in H^1(0,T)$, $\psi_*\in L^\infty(0,T;\Sigma)$, such that
\begin{align*}
 \alpha_n &\rightharpoonup \alpha_* \text{ in } H^1(0,T), \text{and } \alpha_n \rightarrow \alpha_* \text{ in } L^2(0,T) ,\\
 \psi_n &\rightharpoonup \psi_* \text{ in } L^2(0,T;\Sigma),\\
 \psi_n &\rightarrow \psi_* \text{ in } L^2(0,T;L^2(\R^d)) \cap L^2(0,T;L^{2\sigma+2}(\R^d)),
\end{align*}
as $n \to +\infty$.
Furthermore it holds that
\begin{equation}
\label{eq:loc_sigma_conv}
 \psi_n(t) \rightarrow \psi_*(t) \text{ in } L^2(\R^d), \text{ and } \psi_n(t) \rightharpoonup \psi_*(t) \text{ in } \Sigma
\end{equation}
for almost all $t\in[0,T]$.
\end{proposition}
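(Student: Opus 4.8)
The plan is to extract convergent subsequences from the uniform bounds furnished by finiteness of $J_*$ together with the a priori estimates for \eqref{eq:schroed}. Since every term in \eqref{eq:J2} is nonnegative and the sequence is infimizing, we have $J(\psi_n,\alpha_n)\le J_*+1$ for all large $n$; as $\gamma_2>0$, this forces a uniform bound on $\|\dot\alpha_n\|_{L^2(0,T)}$. Combined with the fixed initial value $\alpha_n(0)=\alpha_0$ and the fundamental theorem of calculus, this yields a uniform bound on $\|\alpha_n\|_{H^1(0,T)}$. Reflexivity of $H^1(0,T)$ then gives a weakly convergent subsequence $\alpha_n\rightharpoonup\alpha_*$, while the compact embedding $H^1(0,T)\hookrightarrow C([0,T])$ upgrades this to uniform convergence $\alpha_n\to\alpha_*$, hence to convergence in $L^2(0,T)$; in particular $(\alpha_n)$ is uniformly bounded in $L^\infty(0,T)$.

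The second and crucial step is a uniform bound $\sup_n\|\psi_n\|_{L^\infty(0,T;\Sigma)}<\infty$. Mass conservation gives $\|\psi_n(t)\|_{L^2}=1$ for all $t$. For the energy \eqref{eq:energy}, the work identity \eqref{eq:Ederiv}, the estimate $|\omega_n(t)|\le\|V\|_{L^\infty}$ coming from \eqref{eq:weight} and mass conservation, and the $L^2$-bound on $\dot\alpha_n$ together show that $E_n(t)$ stays uniformly bounded on $[0,T]$. Since $\lambda\ge0$, the kinetic term $\|\nabla\psi_n(t)\|_{L^2}^2$ is then controlled up to the potential contribution $\int(\alpha_n V+U)|\psi_n|^2\,dx$. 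Because $U$ is merely subquadratic, and possibly unbounded below, this contribution cannot be discarded directly; I would instead close the estimate by differentiating the second moment $\|x\psi_n(t)\|_{L^2}^2$ and the kinetic energy in time and running a Gronwall argument, using that $\nabla U$ grows at most linearly and that $V,\nabla V\in L^\infty$ with $\alpha_n$ uniformly bounded. This produces the desired uniform $\Sigma$-bound. Weak-$*$ compactness in $L^\infty(0,T;\Sigma)$, equivalently weak compactness in the Hilbert space $L^2(0,T;\Sigma)$, then yields $\psi_n\rightharpoonup\psi_*$ with $\psi_*\in L^\infty(0,T;\Sigma)$.

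For the strong convergences I would invoke an Aubin--Lions--Simon argument. Using \eqref{eq:schroed} I estimate $\partial_t\psi_n$ in $\Sigma^*$ term by term: the Laplacian and the moment term are bounded by $\|\psi_n\|_\Sigma$, the potential term $\alpha_n V\psi_n$ by $\|\alpha_n\|_{L^\infty}\|V\|_{L^\infty}$, and the nonlinearity by $\|\psi_n\|_\Sigma^{2\sigma+1}$ via the continuous embedding $\Sigma\hookrightarrow H^1\hookrightarrow L^{2\sigma+2}(\R^d)$, where $\sigma<2/(d-2)$ is used. Hence $\partial_t\psi_n$ is bounded in $L^2(0,T;\Sigma^*)$. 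Since $\Sigma$ is compactly embedded in $L^2(\R^d)$ and $L^2(\R^d)\hookrightarrow\Sigma^*$ continuously, Aubin--Lions gives $\psi_n\to\psi_*$ strongly in $L^2(0,T;L^2(\R^d))$. Convergence in $L^2(0,T;L^{2\sigma+2}(\R^d))$ then follows either from the compact embedding $\Sigma\hookrightarrow L^{2\sigma+2}(\R^d)$, whose tightness at infinity is supplied by the weight $x$, or by Gagliardo--Nirenberg interpolation between the strong $L^2_tL^2_x$ convergence and the uniform $L^\infty_tH^1_x$ bound. Finally, passing to a further subsequence, the strong $L^2_tL^2_x$ convergence gives $\psi_n(t)\to\psi_*(t)$ in $L^2(\R^d)$ for almost every $t$; for such $t$ the sequence $(\psi_n(t))$ is bounded in the Hilbert space $\Sigma$, so any weakly convergent sub-subsequence has a limit that must coincide with the $L^2$-limit $\psi_*(t)$, whence $\psi_n(t)\rightharpoonup\psi_*(t)$ in $\Sigma$, which is \eqref{eq:loc_sigma_conv}.

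I expect the main obstacle to be the uniform $\Sigma$-bound of the second step: because the subquadratic potential $U$ need not be bounded below, the energy does not by itself dominate $\|x\psi_n\|_{L^2}^2$, and one must genuinely exploit the subquadratic structure through a moment and Gronwall estimate rather than through a single conservation law. The remaining ingredients, namely weak and weak-$*$ compactness, the Aubin--Lions lemma, and interpolation, are standard once that bound is in hand.
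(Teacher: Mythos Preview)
Your proposal is correct and follows essentially the same route as the paper: bound $\alpha_n$ in $H^1(0,T)$ from the $\gamma_2$-term, use the work identity to bound $E_n$ uniformly, close the $\Sigma$-estimate via the virial/second-moment computation and Gronwall, then apply Aubin--Lions (with $\partial_t\psi_n$ bounded in $\Sigma^*$ and $\Sigma\hookrightarrow L^2$ compact) and Gagliardo--Nirenberg interpolation. The only cosmetic difference is that the paper does not differentiate the kinetic energy separately but instead reads off $\|\nabla\psi_n(t)\|_{L^2}^2 \le C + C\|x\psi_n(t)\|_{L^2}^2$ directly from the energy bound and runs Gronwall on the second moment alone; your coupled Gronwall on both quantities works equally well.
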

\begin{proof}  By definition, $J \ge 0$ and thus it is bounded from below. For an infimizing sequence $(\psi_n, \alpha_n)_{n\in \N}$ the sequence of objective functional values $(J(\psi_n,\alpha_n))_{n\in\N}$ 
converges and is bounded on $\R$. Hence, it holds that $J(\psi_n,\alpha_n) \le C < +\infty$ for all $n\in\N$. Since $\gamma_2>0$ it follows that
\[
 \int_0^T (\dot{\alpha}_n(t))^2 \;dt \le C <+\infty.
\]
For smooth $\alpha_n:[0,T]\rightarrow \R$ we compute
\[
 \alpha_n(t) = \alpha_n(0) + \int_0^t \dot{\alpha}_n(s) \;ds \le \alpha_n(0) + \left(T \int_0^T (\dot{\alpha}_n(s))^2 \;ds\right)^{1/2} <+\infty,
\]
and thus $\alpha_n$ is bounded in $L^\infty(0,T)$. By approximation (using the fact that $\alpha_n(0) = \alpha_0$ is fixed), the sequence $(\alpha_n)_{n\in\N}$ is 
uniformly bounded in $L^\infty(0,T)$, which in turn implies a uniform bound in $L^2(0,T)$ and thus in $H^1(0,T)$. Hence, 
there exists a subsequence, still denoted $(\alpha_n)_{n\in\N}$, and $\alpha_* \in H^1(0,T)$, such that
\[
 \alpha_n \rightharpoonup \alpha_* \in H^1(0,T).
\]
Moreover, since $H^1(0,T)$ is compactly embedded into $L^2(0,T)$, we deduce that $\alpha_n \rightarrow \alpha_*$ in $L^2(0,T)$. Next, we recall that 
\[
 \frac{d}{dt} E_n(t) = \dot{\alpha}_n(t) \int_{\R^d} V(x) |\psi_n(t,x)|^2 \;dx
\]
and hence
\[
\| \dot E_n \|_{L^2_t} \le \|\dot{\alpha}_n \|_{L^2_t} \| V\|_{L^\infty_x} \| \psi_0 \|_{L^2_x}^2,
\]
in view of mass conservation $\| \psi_n(t) \|_{L^2_x} = \| \psi_0 \|_{L^2_x}$. Since $E_n(0)=E_0$ depends only on $\psi_0$ and $\alpha_0$ (and is thus independent of $n\in \N$), 
the same argument as before yields $\|E_n\|_{L^\infty_t} \le C$. Recalling the definition of the energy \eqref{eq:energy} and the fact that $\lambda \ge 0$, we obtain
\begin{equation}
 \label{eq:nabla_bound}
 \frac{1}{2}\|\nabla \psi_n(t)\|_{L^2_x}^2 \le \|E_n\|_{L^\infty_t} + c \|\alpha_n\|_{L^\infty_t} \|\psi_0\|_{L^2_x}^2 + C\|x\psi_n(t)\|_{L^2_x}^2,
\end{equation}
again using conservation of mass $\|\psi_n(t)\|_{L^2_x} = \|\psi_0\|_{L^2_x}$. Furthermore, it holds that
\begin{align*}
 \frac{d}{dt} \int_{\R^d} |x|^2 |\psi|^2 \; dx &= 2\, \re \int_{\R^d} i |x|^2 \conj{\psi} \left( \frac{1}{2}\Delta \psi - \lambda |\psi|^{2\sigma}\psi - \lambda \alpha V \psi - U\psi \right) dx\\
 &= 2\, \im \int_{\R^d} x\conj{\psi} \nabla\psi \;dx \le \|x\psi(t)\|_{L^2_x}^2 + \|\nabla \psi(t)\|_{L^2_x}^2,
\end{align*}
which, in view of the bound~\eqref{eq:nabla_bound} and Gronwall's inequality, yields
\[
 \|x\psi(t)\|_{L^2_x}^2 \le C\left(\|E_n\|_{L^\infty_t} + \|\alpha_n\|_{L^\infty_t} \|\psi_0\|_{L^2_x}^2\right),
\]
for all $t\in [0,T]$.
In summary we have shown
\begin{equation} \label{eq:H1_bounds}
 \|\psi_n(t)\|_{\Sigma}^2 = \|\psi_n(t)\|_{H^1_x}^2 + \|x\psi_n(t)\|_{L^2_x}^2 \le C,
\end{equation}
where $C>0$ is independent of $n\in\N$ and $t\in[0,T]$. Hence, $\psi_n$ is uniformly bounded in $L^\infty(0,T;\Sigma)$ and in particular in $L^2(0,T; \Sigma)$. 
By reflexivity of $L^2(0,T;\Sigma)$, we consequently infer the existence of a subsequence (denoted by the same symbol) such that
\[
 \psi_n \rightharpoonup \psi_* \text{ in } L^2(0,T;\Sigma)\quad \text{as $n\to + \infty$}.
\]
To obtain the strong convergence announced above, we first note that \eqref{eq:schroed} implies $\partial_t \psi_n \in L^\infty(0,T;\Sigma^*)$. On the other hand, $\Sigma$ is compactly embedded in $L^2(\R^d)$. Thus, we can apply the Aubin--Lions Lemma to deduce 
\[
 \psi_n \xrightarrow{n \to \infty} \psi_* \text{ in } L^2((0,T)\times\R^d).
\]\color{black}{}
In particular, there exists yet another subsequence (still denoted by the same symbol), such that
\[
 \psi_n(t)  \xrightarrow{n \to \infty}  \psi_*(t) \text{ in $L^2(\R^d)$, for almost all $t\in[0,T]$.}
\]
In order to obtain weak convergence in the energy space, i.e. $\psi_n(t) \rightharpoonup \psi_*(t) \text{ in } \Sigma$, we 
fix $t\in[0,T]$ such that $\psi_n(t) \to \psi_*(t)$ in $L^2(\R^d)$. In view of~\eqref{eq:H1_bounds}, every subsequence of $\psi_n(t)$ has yet another subsequence 
such that $\psi_n(t)$ converges weakly in $\Sigma$ to some limit. On the other hand, this limit is necessarily given by $\psi_*(t)$, 
since $\psi_n(t) \to \psi_*(t)$ in $L^2(\R^d)$. Hence the whole sequence converges weakly in $\Sigma$ to $\psi_*(t)$. 
By lower-semicontinuity of the $\Sigma$--norm we can deduce $\|\psi_n(t)\|_{\Sigma}\le C$ and thus $\psi_* \in L^\infty(0,T;\Sigma)$. 

Finally, the announced convergence in $L^2(0,T;L^{2\sigma+2}(\R^d)$) is obtained by invoking the Gagliardo--Nirenberg inequality, i.e.
\begin{equation}
\label{eq:GN}
 \| \xi \|_{L^r_x} \le C \| \xi \|_{L^2_x}^{1-\delta(r)} \| \nabla\xi \|_{L^2_x}^{\delta(r)},
\end{equation}
where $2\le r < \frac{2d}{d-2}$ and $\delta(r) = d (\frac{1}{2}-\frac{1}{r})$. This concludes the proof of Proposition~\ref{prop:min_limit}.
\end{proof}

\subsection{Minimizers as mild solutions}\label{sec:mild}

Next we prove that the limit $\psi_*$ obtained in the previous subsection is indeed a mild solution of \eqref{eq:schroed} with corresponding control $\alpha_*$. 
From the physical point of view, this is important since it implies continuous (in time) dependence of $\psi_*$ upon a given initial data $\psi_0$. To this end, one should also 
note that $H^1(0,T)\hookrightarrow C(0,T)$ (using Sobolev imbeddings), and hence the obtained optimal control parameter $\alpha_*(t)$ is indeed a continuous function on $[0,T]$.

\begin{proposition}
\label{prop:NLS_sol}
Let $(\psi_*,\alpha_*)\in \Upsilon(0,T)\times H^1(0,T)$ be the limit obtained in Proposition~\ref{prop:min_limit}. Then $\psi_*$ is a mild solution of \eqref{eq:schroed} with control $\alpha_*$ and 
\[\psi_*\in C([0,T];\Sigma)\cap C^1([0,T];\Sigma^*).\]
In particular, this implies that the convergence result \eqref{eq:loc_sigma_conv} holds for all $t\in[0,T]$.
\end{proposition}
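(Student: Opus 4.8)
The plan is to pass to the limit $n\to\infty$ in the mild (Duhamel) formulation satisfied by each $\psi_n$, namely
\[
\psi_n(t) = S(t)\psi_0 - i\int_0^t S(t-s)\bigl(\lambda|\psi_n(s)|^{2\sigma}\psi_n(s) + \alpha_n(s)V\psi_n(s)\bigr)\,ds,
\]
and to show that the right-hand side converges, for each fixed $t\in[0,T]$, to the analogous expression built from $(\psi_*,\alpha_*)$. Since $S(t)\psi_0$ is independent of $n$, the task reduces to passing to the limit in the two contributions to the Duhamel integral. Throughout I would rely on the convergences collected in Proposition~\ref{prop:min_limit}, on the uniform bound \eqref{eq:H1_bounds}, i.e.\ $\sup_n\|\psi_n\|_{L^\infty(0,T;\Sigma)}\le C$, and on the (local in time) Strichartz estimates for the propagator $S(t)=e^{-itH}$ with subquadratic $H$, which underlie the well-posedness theory of \cite{caze, car09}.

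The bilinear term is the easier one. Writing $\alpha_n V\psi_n - \alpha_* V\psi_* = (\alpha_n-\alpha_*)V\psi_n + \alpha_* V(\psi_n-\psi_*)$ and using $\alpha_n\to\alpha_*$ in $L^\infty(0,T)$ (a consequence of the compact embedding $H^1(0,T)\hookrightarrow C([0,T])$ together with the weak $H^1$ convergence), mass conservation $\|\psi_n(t)\|_{L^2_x}=1$, the boundedness of $V$, and the strong convergence $\psi_n\to\psi_*$ in $L^2(0,T;L^2(\R^d))$, one obtains $\alpha_n V\psi_n\to\alpha_* V\psi_*$ in $L^2(0,T;L^2(\R^d))$. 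Since $S(t-s)$ is unitary on $L^2(\R^d)$, the corresponding Duhamel contribution then converges in $C([0,T];L^2(\R^d))$.

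The nonlinear term is the crux. Using the elementary pointwise bound $\bigl||z|^{2\sigma}z - |w|^{2\sigma}w\bigr|\le C(|z|^{2\sigma}+|w|^{2\sigma})|z-w|$ followed by H\"older's inequality on $\R^d$ with the exponents $\tfrac{2\sigma+2}{2\sigma}$ and $2\sigma+2$, I would estimate, for a.e.\ $t$,
\[
\bigl\||\psi_n(t)|^{2\sigma}\psi_n(t) - |\psi_*(t)|^{2\sigma}\psi_*(t)\bigr\|_{L^{r'}_x} \le C\bigl(\|\psi_n(t)\|_{L^{2\sigma+2}_x}^{2\sigma}+\|\psi_*(t)\|_{L^{2\sigma+2}_x}^{2\sigma}\bigr)\|\psi_n(t)-\psi_*(t)\|_{L^{2\sigma+2}_x},
\]
where $r'=\tfrac{2\sigma+2}{2\sigma+1}$. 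The restriction $\sigma<2/(d-2)$ gives $2\sigma+2<\tfrac{2d}{d-2}$, hence $\Sigma\hookrightarrow H^1(\R^d)\hookrightarrow L^{2\sigma+2}(\R^d)$, so \eqref{eq:H1_bounds} bounds the prefactor uniformly in $n$ and $t$. Integrating in time and invoking $\psi_n\to\psi_*$ in $L^2(0,T;L^{2\sigma+2}(\R^d))$ then yields $|\psi_n|^{2\sigma}\psi_n\to|\psi_*|^{2\sigma}\psi_*$ strongly in $L^{q'}(0,T;L^{r'}(\R^d))$, where $(q,r)=\bigl(\tfrac{4(\sigma+1)}{d\sigma},2\sigma+2\bigr)$ is the admissible pair associated with $r$; note that $q\ge2$ is precisely equivalent to $\sigma\le2/(d-2)$, and $q'\le2$ makes $L^2_t\hookrightarrow L^{q'}_t$ on the bounded interval $[0,T]$ applicable. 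Applying the dual (local in time) Strichartz estimate for $S$ then shows that this Duhamel contribution converges in $C([0,T];L^2(\R^d))\cap L^q(0,T;L^r(\R^d))$. Combining the two contributions, $\psi_*$ satisfies the mild equation with control $\alpha_*$. This is the main obstacle of the proof, as it is the only place where the full strength of the dispersive (Strichartz) estimates for the subquadratic propagator is required.

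It remains to establish the regularity and to upgrade the convergence. Since $\alpha_*\in H^1(0,T)\hookrightarrow C([0,T])$ and $\psi_*$ is a mild solution, the well-posedness theory in the energy space (\cite{caze, car09}) yields $\psi_*\in C([0,T];\Sigma)$. Reading off $\partial_t\psi_* = -iH\psi_* - i\lambda|\psi_*|^{2\sigma}\psi_* - i\alpha_* V\psi_*$ from \eqref{eq:schroed}, each term lies in $C([0,T];\Sigma^*)$: indeed $H=-\tfrac12\Delta+U$ maps $\Sigma$ boundedly into $\Sigma^*$ (the subquadratic growth of $U$ being controlled by $\|x\psi\|_{L^2_x}$), the nonlinearity lies in $L^{r'}(\R^d)=(L^{2\sigma+2}(\R^d))^*\hookrightarrow\Sigma^*$, and $V\psi_*\in\Sigma\hookrightarrow\Sigma^*$ since $V\in W^{1,\infty}(\R^d)$, with continuity in time inherited from $\psi_*\in C([0,T];\Sigma)$. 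Hence $\psi_*\in C^1([0,T];\Sigma^*)$. Finally, the convergence of the Duhamel integral obtained above takes place in $C([0,T];L^2(\R^d))$, so $\psi_n(t)\to\psi_*(t)$ in $L^2(\R^d)$ for \emph{every} $t\in[0,T]$; combined with the uniform bound \eqref{eq:H1_bounds} and the subsequence argument already used in Proposition~\ref{prop:min_limit}, this promotes $\psi_n(t)\rightharpoonup\psi_*(t)$ in $\Sigma$ to hold for every $t\in[0,T]$, which is exactly the assertion that \eqref{eq:loc_sigma_conv} is valid for all $t$.
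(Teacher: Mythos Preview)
Your argument is correct, but it proceeds along a different route than the paper's. The paper avoids Strichartz estimates at the convergence stage: it pairs the Duhamel formula against a test function $\chi\in C_0^\infty(\R^d)$, transfers the propagator onto $\chi$ via $\langle S(t-s)f,\chi\rangle=\langle f,S(s-t)\chi\rangle$, and proves an auxiliary lemma showing that $\tilde\chi(s)=S(s-t)\chi$ stays bounded in $L^\infty(0,t;\Sigma)\hookrightarrow L^\infty(0,t;L^{2\sigma+2})$; the nonlinear term is then handled by a three-factor H\"older inequality entirely in $L^{2\sigma+2}_x$. This yields the mild formulation only for a.e.\ $t$, so the paper subsequently identifies $\psi_*$ as a weak $\Sigma$--solution and invokes the uniqueness and regularity theory from \cite{caze} to upgrade to $C([0,T];\Sigma)\cap C^1([0,T];\Sigma^*)$. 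Your approach, by contrast, applies the (local in time, iterated over $[0,T]$) inhomogeneous Strichartz estimate directly to the difference of the Duhamel integrals, which immediately produces convergence in $C([0,T];L^2(\R^d))$ and hence the validity of \eqref{eq:loc_sigma_conv} for \emph{all} $t$ without passing through the weak--solution framework. The paper's route is more self-contained (it only needs $\Sigma$--mapping properties of $S(t)$, proved from scratch), while yours is more streamlined once the dispersive machinery is taken for granted; both ultimately appeal to the $\Sigma$--well-posedness theory of \cite{caze,car09} for the final regularity statement.
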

\begin{proof}
First we note that, by construction, each $\psi_n$ satisfies
\[
 \psi_n(t) = S(t)\psi_0 - i \int_0^t S(t-s) \left( \lambda |\psi_n(s)|^{2\sigma}\psi_n(s) + \alpha_n(s) V \psi_n(s) \right)ds
\]
for all $t\in [0,T]$.
Here and in the following we shall suppress the $x$--dependence of $\psi$ for notational convenience.
In order to prove that $\psi_*$ is a mild solution corresponding to the control $\alpha_*$, we take the $L^2$--scalar product of the above equation with a test function 
$\chi\in C^\infty_0(\R^d)$. This yields
\begin{equation}\label{eq:mild_NLS} 
\begin{split}
 \langle \psi_n(t), \chi \rangle_{L^2_x}= & \ \langle S(t)\psi_0 , \chi \rangle_{L^2_x} - i \lambda \int_0^t \big\langle S(t-s)  |\psi_n(s)|^{2\sigma}\psi_n(s), \chi \big\rangle_{L^2_x} \, ds\\
&\  -i\int_0^t \big \langle S(t-s)  \alpha_n(s)   V \psi_n(s), \chi \big \rangle_{L^2_x} \;ds.
\end{split}
\end{equation}
In view of Proposition~\ref{prop:min_limit}, the term on the left hand side of this identity converges to the desired expression for almost all $t\in [0,T]$, i.e.
\[
\lim_{n\to \infty} \langle \psi_n(t), \chi \rangle_{L^2_x} = \langle \psi_*(t), \chi\rangle_{L^2_x}.
 \]
In order to proceed further, we note that for any $f\in \mathcal D'(\R^d)$ it holds that
\begin{equation}\label{eq:ident}
 \langle S(t-s) f(s), \chi \rangle_{L^2_x} = \langle f(s), S(s-t) \chi \rangle_{L^2_x},
\end{equation}
and we therefore define
\begin{equation}
\label{eq:chi}
\tilde{\chi}:[0,t]\times \R^d \rightarrow\C, \quad \chi \mapsto \tilde \chi(\cdot, x):=S(\cdot - t)\chi(x),
\end{equation}
for which we can prove the following regularity properties.
\begin{lemma}
 \label{lem:tilde_omega}
There exists a constant $C=C(T)>0$ such that for all $t\in [0,T]$ it holds that
\begin{align*}
\sup_{s\in [0,t]} \left ( \|x \tilde{\chi}(s)\|_{L^2_x} + \|\nabla \tilde{\chi}(s)\|_{L^2_x} \right ) & \le C(T) < +\infty,
\end{align*}
where the function $\tilde{\chi}$ is defined in~\eqref{eq:chi}. In particular, the function $\tilde{\chi}$ is bounded in $L^\infty(0,t;L^{2\sigma+2}(\R^d))$.
\end{lemma}
\begin{proof}[Proof of Lemma \ref{lem:tilde_omega}.]
The norm $\|\tilde{\chi}(s)\|_{L^2_x} = \|S(s-t)\chi\|_{L^2_x}$ is conserved since $S(t)$ is a unitary operator on $L^2(\R^d)$. Furthermore, it holds that
\[
 i\partial_t [\nabla, S(t)] = H [\nabla, S(t)] + [\nabla, H] S(t) = H [\nabla, S(t)] + \nabla U S(t),
\]
and hence 
\[
 [\nabla, S(t)] = -i \int_0^t S(t-s) \nabla U\ S(s) \;ds.
\]
We can thus estimate
\begin{equation}
\label{eq:nabla_chi}
\begin{split}
 \|\nabla \tilde{\chi}(s)\|_{L^2_x} &= \|\nabla S(t-s) \chi\|_{L^2_x}\\
 & \le \|S(t-s) \nabla \chi\|_{L^2_x} + \Big \|\int_0^{t-s} S(t-s-\tau)\nabla U \tilde{\chi}(\tau) \;d\tau \Big \|_{L^2_x}\\
 &\le \|\nabla\chi\|_{L^2_x} + C \int_0^{t-s} \|x\tilde{\chi}(\tau)\|_{L^2_x}\;d\tau,
\end{split}
\end{equation}
since $U$ is subquadratic, i.e.\ $|\nabla U(x)|\le C|x|$. Likewise, we deduce
\[
 [x, S(t)] = -i \int_0^t S(t-s) \nabla S(s) \;ds
\]
and hence
\begin{equation}
 \label{eq:x_chi}
 \|x\tilde{\chi}(s)\|_{L^2_x} \le \|x\chi\|_{L^2_x} + \int_0^{t-s} \|\nabla \tilde{\chi}(\tau)\|_{L^2_x}\;d\tau.
\end{equation}
Combining the estimates~\eqref{eq:nabla_chi} and~\eqref{eq:x_chi} and applying Gronwall's inequality yields
\[
 \|x \tilde{\chi}(s)\|_{L^2_x} + \|\nabla \tilde{\chi}(s)\|_{L^2_x} \le C \left(\|x \chi\|_{L^2_x} + \|\nabla \chi\|_{L^2_x}\right) < +\infty,
\]
where $C>0$. The bound in $L^\infty(0,t;L^{2\sigma+2}(\R^d))$ then 
follows from the uniform-in-time bound in $H^1(\R^d)$ and the Gagliardo--Nirenberg inequality~\eqref{eq:GN}.
\end{proof}

With the result of Lemma \ref{lem:tilde_omega} at hand, we consider the second term on the right hand side of \eqref{eq:mild_NLS}. Rewriting it using \eqref{eq:ident}, we 
estimate
\begin{align*}
& \left|\int_0^t \big \langle |\psi_n(s)|^{2\sigma}\psi_n(s) - |\psi_*(s)|^{2\sigma}\psi_*(s), \tilde{\chi}(s)\big \rangle_{L^2_x} \, ds \right| \\
& \le  \int_0^t \int_{\R^d} \left| |\psi_n(s,x)|^{2\sigma}\psi_n(s,x) - |\psi_*(s,x)|^{2\sigma}\psi_*(s,x) \right| |\tilde{\chi}(s,x)| \;dx \, ds\\
& \le \  C  \int_0^t \int_{\R^d} \left( |\psi_n(s,x)|^{2\sigma} + |\psi_*(s,x)|^{2\sigma} \right) |\psi_n(s,x)-\psi_*(s,x)|  |\tilde{\chi}(s,x)| \, dx \, ds.
 \end{align*}
By H\"older's inequality, it holds that
\begin{align*}
& \int_0^t \int_{\R^d} \left| |\psi_n(s,x)|^{2\sigma} + |\psi_*(s,x)|^{2\sigma} \right| |\psi_n(s,x)-\psi_*(s,x)|  |\tilde{\chi}(s)| \, dx \, ds\\
& \le \ \sqrt{T} \left( \|\psi_n\|_{L^\infty_t L^{2\sigma+2}_x}^{2\sigma} + \|\psi_*\|_{L^\infty_t L^{2\sigma+2}_x}^{2\sigma} \right) \|\psi_n-\psi_*\|_{L^2_t L^{2\sigma+2}_x}  \|\tilde{\chi}\|_{L^\infty_t L^{2\sigma+2}_x},
\end{align*}
where, in view of Lemma~\ref{lem:tilde_omega}, we have  $\|\tilde{\chi}\|_{L^\infty_t L^{2\sigma+2}_x} <+ \infty$. In addition, Proposition~\ref{prop:min_limit} implies that 
the factor inside the parentheses is bounded and that $$\lim_{n \to \infty} \|\psi_n-\psi_*\|_{L^2_t L^{2\sigma+2}_x} =0.$$ 
Thus, we have shown that the second term on the right hand side of \eqref{eq:mild_NLS} vanishes in the limit $n\to\infty$. 

It remains to treat the last term on the right hand side of \eqref{eq:mild_NLS}, rewritten via \eqref{eq:ident}. We first estimate 
\begin{align*}
 &\left|\int_0^t \big \langle \alpha_n (s) V \psi_n (s) - \alpha_*(s) V \psi_*(s), \tilde{\chi}(s) \big\rangle_{L^2_x} \, ds \, \right|\\
 &\le \int_0^t\int_{\R^d} |\alpha_n(s)| \, |V(x)| \, |\psi_n(s,x) - \psi_*(s,x)| \, |\tilde{\chi}(s,x)| \;dx\, ds\\
 & \quad + \int_0^t\int_{\R^d} |\alpha_n(s)-\alpha_*(s)|\, |V(x)|\, |\psi_*(s,x)| |\tilde{\chi}(s,x)| \;dx\, ds.
\end{align*}
Here, the last term on the right hand side can be bounded by
\begin{align*}
& \int_0^t\int_{\R^d} |\alpha_n(s)-\alpha_*(s)| |V(x)| |\psi_*(s,x)| |\tilde{\chi}(s,x)| \;dx\, ds \\
& \le \|\alpha_n - \alpha_* \|_{L^2_t} \|V\|_{L^\infty_x} \|\psi\|_{L^2_tL^2_x} \|\tilde{\chi}\|_{L^\infty_tL^2_x} \xrightarrow{n \to \infty} 0,
\end{align*}
in view of the convergence of $\alpha_n\to \alpha_*$ 
in $L^2(0,T)$. For the remaining term we use the fact that $V\in L^\infty(\R^d)$ and H\"older's inequality to obtain that
\begin{align*}
 &\int_0^t\int_{\R^d} |\alpha_n(s)| \, |V(x)| \, |\psi_n(s,x) - \psi_*(s,x)| |\tilde{\chi}(s,x)| \;dx \, ds  \\ 
  &\le \ \|\alpha_n\|_{L^2_t} \|V\|_{L^\infty_x} \|\psi_n - \psi_*\|_{L^2_t L^2_x} \|\tilde{\chi}\|_{L^\infty_t L^2_x}  \xrightarrow{n \to \infty} 0,
\end{align*} 
due to the results of Proposition~\ref{prop:min_limit} and Lemma~\ref{lem:tilde_omega}.

In summary this proves that $\psi_* \in \Upsilon(0,T)$ satisfies, for \emph{almost all} $t\in[0,T]$,
\[
 \psi_*(t) = S(t)\psi_0 - i \int_0^t S(t-s) \left( \lambda |\psi_*(s)|^{2\sigma}\psi_*(s) + \alpha_*(s) V \psi_*(s) \right)ds,
\]
 i.e.\ $\psi_*$ is a \emph{weak $\Sigma$--solution} in the terminology of \cite[Definition 3.1.1]{caze} (where the analogous notion of weak $H^1$--solutions is introduced). 
 In order to obtain that $\psi_*$ is indeed a mild solution we note that
\[
 \psi_* \in \Upsilon(0,T) \hookrightarrow C([0,T];L^2(\R^d)) \cap C([0,T];L^{2\sigma+2}(\R^d))
\]
by interpolation and the Gagliardo--Nirenberg inequality~\eqref{eq:GN}. Classical arguments based on Strichartz estimates then yield uniqueness of the weak $\Sigma$--solution $\psi_*$. 
Arguing as in the proof of \cite[Theorem 3.3.9]{caze}, we infer that $\psi_*$ is indeed a mild solution to~\eqref{eq:schroed}, satisfying $\psi_*\in C([0,T];\Sigma)\cap C^1([0,T];\Sigma^*)$.
\end{proof}

\subsection{Lower semicontinuity of objective functional}\label{sec:lsc_J}
In order to conclude that the pair $(\psi_*,\alpha_*)\in \Lambda(0,T)$ is indeed a minimizer of our optimal control problem, it remains to show 
lower semicontinuity of the functional $J(\psi, \alpha)$ with respect to the convergence results established in Proposition~\ref{prop:min_limit}. 
\begin{lemma}
\label{lem:lsc}
For the sequence constructed in Proposition~\ref{prop:min_limit}, it holds that
\[
 J_* = \liminf_{n\rightarrow\infty}J(\psi_n,\alpha_n) \ge J(\psi_*,\alpha_*).
\]
\end{lemma}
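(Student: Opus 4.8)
The plan is to prove lower semicontinuity by treating the three terms of $J(\psi,\alpha)$ in \eqref{eq:J2} separately, since $\liminf$ of a sum is bounded below by the sum of $\liminf$s (once each term is controlled). The regularization term $\gamma_2 \int_0^T (\dot\alpha_n)^2\,dt$ is the cleanest: since $\alpha_n \rightharpoonup \alpha_*$ in $H^1(0,T)$, the $L^2$-norm of the derivative is weakly lower semicontinuous, so $\liminf_n \int_0^T (\dot\alpha_n)^2\,dt \ge \int_0^T (\dot\alpha_*)^2\,dt$. The observable term $\langle \psi_n(T), A\psi_n(T)\rangle^2$ requires showing that $\langle \psi_n(T), A\psi_n(T)\rangle \to \langle \psi_*(T), A\psi_*(T)\rangle$; here I would invoke Proposition~\ref{prop:NLS_sol}, which upgrades the pointwise convergence \eqref{eq:loc_sigma_conv} to hold \emph{for all} $t\in[0,T]$, in particular at $t=T$. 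Thus $\psi_n(T)\to\psi_*(T)$ in $L^2(\R^d)$ and $\psi_n(T)\rightharpoonup\psi_*(T)$ in $\Sigma$. Since $A:\Sigma\to L^2(\R^d)$ is bounded and self-adjoint, I would write $\langle \psi_n(T), A\psi_n(T)\rangle - \langle \psi_*(T), A\psi_*(T)\rangle = \langle \psi_n(T)-\psi_*(T), A\psi_n(T)\rangle + \langle \psi_*(T), A(\psi_n(T)-\psi_*(T))\rangle$ and use weak$\Sigma$/strong$L^2$ convergence to pass to the limit in each piece; squaring then preserves convergence, so this term is in fact continuous.

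The genuinely delicate term is the physical-work penalization $\gamma_1 \int_0^T (\dot\alpha_n)^2\,\omega_n^2\,dt$, where $\omega_n(t) = \int_{\R^d} V(x)|\psi_n(t,x)|^2\,dx$ as in \eqref{eq:weight}. This is a product of two sequences: $\dot\alpha_n$ converges only weakly in $L^2(0,T)$, while $\omega_n$ depends quadratically on $\psi_n$. The strategy I would pursue is first to establish that $\omega_n \to \omega_*$ strongly in some sense, and then to combine this with weak lower semicontinuity of a weighted $L^2$-norm of $\dot\alpha_n$. For the pointwise convergence $\omega_n(t)\to\omega_*(t)$, note that $|\omega_n(t)-\omega_*(t)| \le \|V\|_{L^\infty_x}\,\||\psi_n(t)|^2 - |\psi_*(t)|^2\|_{L^1_x}$, and the right-hand side is controlled by $\|\psi_n(t)-\psi_*(t)\|_{L^2_x}(\|\psi_n(t)\|_{L^2_x}+\|\psi_*(t)\|_{L^2_x})$; by \eqref{eq:loc_sigma_conv} and mass conservation this $\to 0$ for almost every $t$. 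Since $|\omega_n(t)|\le \|V\|_{L^\infty_x}\|\psi_0\|_{L^2_x}^2$ is uniformly bounded, dominated convergence gives $\omega_n\to\omega_*$ in, say, $L^p(0,T)$ for any finite $p$, and in particular a.e.\ after passing to a subsequence.

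To handle the product, I would rewrite $\int_0^T (\dot\alpha_n)^2\omega_n^2\,dt$ and exploit the structure: the map $\dot\alpha \mapsto \int_0^T (\dot\alpha)^2 \omega_*^2\,dt$ is convex and strongly continuous on $L^2(0,T)$ (as $\omega_*$ is bounded), hence weakly lower semicontinuous, giving $\liminf_n \int_0^T (\dot\alpha_n)^2\omega_*^2\,dt \ge \int_0^T (\dot\alpha_*)^2\omega_*^2\,dt$. It then remains to show that replacing $\omega_*^2$ by $\omega_n^2$ does not destroy this bound, i.e.\ that $\int_0^T (\dot\alpha_n)^2(\omega_n^2-\omega_*^2)\,dt \to 0$. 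This error is bounded by $\|\dot\alpha_n\|_{L^2_t}^2 \,\|\omega_n^2-\omega_*^2\|_{L^\infty_t}$ if one has uniform convergence, or more carefully by $\|\dot\alpha_n\|_{L^2_t}^2\,\|\omega_n+\omega_*\|_{L^\infty_t}\|\omega_n-\omega_*\|_{L^\infty_t}$; since $\|\dot\alpha_n\|_{L^2_t}^2$ is uniformly bounded and $\|\omega_n-\omega_*\|$ is small, the error vanishes provided the convergence $\omega_n\to\omega_*$ can be taken in $L^\infty(0,T)$. \textbf{The main obstacle} is precisely upgrading $\omega_n\to\omega_*$ from a.e./$L^p$ convergence to $L^\infty(0,T)$ convergence, or else circumventing this by a more refined argument (e.g.\ splitting off a subsequence realizing the $\liminf$, extracting a.e.\ convergence of $\omega_n$, and applying a weak-strong product lemma). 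Once $\omega_n$ is shown to converge strongly enough that the product term $\int_0^T(\dot\alpha_n)^2\omega_n^2\,dt$ is weakly lower semicontinuous as a functional of the pair, combining the three term-wise estimates yields $\liminf_n J(\psi_n,\alpha_n) \ge J(\psi_*,\alpha_*)$, completing the proof.
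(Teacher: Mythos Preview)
Your treatment of the observable term and the $\gamma_2$ term is essentially identical to the paper's: the same splitting of $\langle\psi_n(T),A\psi_n(T)\rangle-\langle\psi_*(T),A\psi_*(T)\rangle$, the same use of Proposition~\ref{prop:NLS_sol} to get strong $L^2$ and weak $\Sigma$ convergence at $t=T$, and the same weak lower semicontinuity of the squared $L^2$-norm for $\dot\alpha_n$. For the $\gamma_1$ term you also reach the same decomposition as the paper---freeze the weight to $\omega_*^2$ and use convexity/weak lsc on $\int(\dot\alpha_n)^2\omega_*^2\,dt$, then control the remainder $\int(\dot\alpha_n)^2(\omega_n^2-\omega_*^2)\,dt$.

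The place where you diverge is in handling this remainder. You aim to prove it tends to \emph{zero}, which leads you to want $\omega_n\to\omega_*$ in $L^\infty(0,T)$; you correctly flag this as an obstacle, because that uniform convergence is not available from the information in Proposition~\ref{prop:min_limit}. But you are asking for more than is needed: for lower semicontinuity it suffices that the $\liminf$ of the remainder be nonnegative. The paper exploits precisely this, invoking Fatou's lemma together with $\liminf_n\omega_n(t)\ge\omega_*(t)\ge 0$ (the latter tacitly using $\omega_n\ge 0$). Your own parenthetical suggestion---a weak--strong product argument---is in fact the cleanest fix and avoids any sign hypothesis: since $\omega_n\to\omega_*$ a.e.\ and boundedly, dominated convergence gives $g\omega_n\to g\omega_*$ in $L^2(0,T)$ for every $g\in L^2(0,T)$; combined with $\dot\alpha_n\rightharpoonup\dot\alpha_*$ in $L^2(0,T)$ this yields $\dot\alpha_n\omega_n\rightharpoonup\dot\alpha_*\omega_*$ in $L^2(0,T)$, and weak lsc of the norm gives the desired inequality directly, without ever splitting off the remainder. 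So drop the $L^\infty$ route, commit to the weak--strong product argument you already mentioned, and the proof closes.
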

\begin{proof}
Since $A\in\mathcal{L}(\Sigma, L^2(\R^d))$ by assumption, the sequence $(A\psi_n(T))_{n\in\N}$ converges weakly to $A\psi(T)$ in $L^2(\R^d)$. In addition 
$\psi_n(T) \to \psi_*(T)$ in $L^2(\R^d)$ as $n\to\infty$ by Proposition~\ref{prop:NLS_sol}, and hence the estimate
\begin{align*}
 &\left| \langle \psi_n(T), A\psi_n(T) \rangle_{L^2_x} - \langle \psi_*(T), A \psi_*(T) \rangle_{L^2_x} \right|\\ &\quad\le \left| \langle \psi_n(T) - \psi_*(T), A\psi_n(T) \rangle_{L^2_x} \right| + \left| \langle \psi_*(T), A (\psi_n(T) - \psi_*(T)) \rangle_{L^2_x} \right|.
\end{align*}
yields convergence of the corresponding term in the objective functional~\eqref{eq:J2}.
Next, we consider the cost term involving $\gamma_1$. In view of \eqref{eq:weight}, we define 
\[
\omega_n(t):=\int_{\R^d} V(x)|\psi_n(t,x)|^2 dx, \qquad \omega_*(t) := \int_{\R^d} V(x)|\psi_*(t,x)|^2 dx,
\]
and estimate
\begin{equation}
\begin{split}
\label{eq:lsc}
 &\liminf_{n\rightarrow\infty} \int_0^T (\dot{\alpha}_n(t))^2 \omega^2_n(t) dt \ge \\
& \liminf_{n\rightarrow\infty} \int_0^T (\dot{\alpha}_n(t))^2 \omega_*^2(t) \;dt + \liminf_{n\rightarrow\infty} \int_0^T (\dot{\alpha}_n(t))^2 \left(\omega^2_n(t) - \omega_*^2(t) \right) dt.
\end{split}
\end{equation}
Note that $0\le \omega_n(t) \le \|V\|_{L^\infty_x}\|\psi_0\|_{L^2_x}^2$ independently of $n\in\N$ and $t\in[0,T]$ and that the same holds for $\omega_*(t)$. The first term on the right hand side of \eqref{eq:lsc} is convex in $\alpha_n$ and thus satisfies
\begin{equation}
\label{eq:lsc_alpha}
 \liminf_{n\rightarrow\infty}\int_0^T (\dot{\alpha}_n(t))^2 \omega_*^2(t) \;dt \ge \int_0^T (\dot{\alpha_*}(t))^2 \omega_*^2(t) \;dt, 
\end{equation}
since any convex and lower semicontinuous functional is weakly lower semicontinuous.
On the other hand, Proposition~\ref{prop:min_limit} implies
\begin{equation}
\label{eq:lsc_g}
 \liminf_{n\rightarrow\infty} \omega_n(t) \ge \omega_*(t) \ge 0 \quad \text{for all $t\in[0,T]$.}
\end{equation}
Thus, using \eqref{eq:lsc_alpha} and~\eqref{eq:lsc_g} together with Fatou's Lemma yields
\begin{align*}
 &\liminf_{n\rightarrow\infty} \int_0^T (\dot{\alpha}_n(t))^2 \omega^2_n(t) \;dt \\
 &\ge  \int_0^T (\dot{\alpha_*}(t))^2 \omega_*^2(t) \;dt  + \int_0^T \liminf_{n\rightarrow\infty} (\dot{\alpha}_n(t))^2 \liminf_{n\rightarrow\infty} \left( \omega^2_n(t) - \omega_*^2(t) \right) dt  \\
 & \ge \int_0^T (\dot{\alpha_*}(t))^2 \omega_*^2(t) \;dt.
\end{align*}
Finally the cost term involving $\gamma_2$ is lower semicontinuous by convexity and weak convergence of $\alpha_n$ in $H^1(0,T)$.
\end{proof}
In summary, we have shown that $J_*=\liminf_{n\rightarrow\infty}J(\psi_n,\alpha_n) \ge J(\psi_*,\alpha_*)$ and thus indeed $J_* = J(\psi_*,\alpha_*)$. In other words, $(\psi_*, \alpha_*)\in \Lambda(0,T) $ 
solves the optimization problem.

\begin{remark} 
Note that the bound on $x\psi_n (t, \cdot)$ in $L^2(\R^d)$, obtained in Proposition \ref{prop:min_limit}, is indeed crucial for proving the weak lower-semicontinuity of $J(\psi, \alpha)$. 
Without such a bound on the second moment, we would only have $$\psi_n(t) \xrightarrow{n \to \infty} \psi (t) \quad \text{in $L^2_{\rm loc}(\R^d)$,}$$ 
due to the lack of compactness of $H^1(\R^d)  \hookrightarrow L^2(\R^d)$. 
In this case, the lower semi-continuity of the term $\langle \psi(T), A\psi(T) \rangle_{L^2_x}$ is not guaranteed. 
A possible way to circumvent this problem would be to assume that $A$ is {\it positive definite}, which, however, is not true for general observables of the form $A= A'-a$, with $a\in \R$. 
A second possibility would be to assume that $A$ is {\it localizing}, i.e. for all $\psi \in H^1(\R^d)$: $\text{supp}_{x\in \R^d}(A\psi(x)) \subseteq B(R)$, for some $R<+\infty$.  
\end{remark}

\section{Derivation and analysis of the adjoint equation}\label{sec:adjoint}

In order to give a characterization of a minimizer $(\psi_*, \alpha_*) \in \Lambda(0,T)$, we need to derive the 
first order optimality conditions for our optimal control problem~\eqref{eq:minprob}. 
For this purpose, we shall first formally compute the derivative of the objective functional $J(\psi, \alpha)$ in the next subsection and consequently analyze the resulting adjoint problem. 
A rigorous justification for the derivative will be given in Section \ref{sec:derivative}. 

\subsection{Identification of the derivative of $J(\psi, \alpha)$} \label{sec:ident}
The mild solution of the nonlinear Schr\"odinger equation \eqref{eq:schroed}, corresponding to the control $\alpha\in H^1(0,T)$, induces a map
\[
\psi : H^1(0,T) \rightarrow \Upsilon(0,T): \quad \alpha \mapsto \psi(\alpha).
\]
Using this map we introduce the unconstrained or \emph{reduced functional}
\[
 \mathcal J : H^1(0,T) \rightarrow \R, \quad \alpha \mapsto \mathcal J(\alpha) := J(\psi(\alpha), \alpha).
\]
For the characterization of critical points, we need to compute the derivative of $\mathcal{J}$. For this calculation let 
$\delta_\alpha \in H^1(0,T)$ with $\delta_\alpha(0)=0$ be a feasible control perturbation. (Recall that $H^1(0,T) \hookrightarrow C(0,T)$ and hence it makes sense 
to evaluate $\delta_\alpha(t)$ at $t=0$.) Then the chain rule yields
\begin{equation}
 \label{eq:J_prime2}
 \begin{split}
 \langle \mathcal J' (\alpha) , \delta_\alpha \rangle_{H^{-1}_t, H^1_t}=  &\ \langle \partial_\psi J(\psi(\alpha), \alpha), \psi'(\alpha) \delta_\alpha \rangle_{\Upsilon^\ast, \Upsilon}   \\
 & \ + \langle \partial_\alpha J(\psi(\alpha),\alpha), \delta_\alpha \rangle_{H^{-1}_t, H_t^1}
\end{split}
\end{equation}
where $\Upsilon^*$ denotes the dual space of $\Upsilon\equiv \Upsilon(0,T)$ for any given $T>0$. 
The main difficulty lies in computing $\psi'(\alpha)$ since $\psi$ is given only implicitly through the nonlinear Schr\"odinger equation~\eqref{eq:schroed}. 

In the following, we shall write the (nonlinear) partial differential equation \eqref{eq:schroed} in a more abstract form, i.e.
\begin{equation} \label{eq:equation}
 P(\psi,\alpha) :=  i \partial_t \psi - H \psi- \alpha(t) V(x) \psi - \lambda |\psi|^{2\sigma} \psi =0,
\end{equation}
where $H = -\frac{1}{2}\Delta +U(x)$ denotes the linear, uncontrolled Hamiltonian operator. Setting $\psi = \psi(\alpha)$ and differentiating with respect to $\alpha$ formally yields
\[
\frac{d}{d\alpha} P(\psi(\alpha),\alpha)= \partial_\psi P(\psi(\alpha),\alpha) \psi'(\alpha) + \partial_\alpha P(\psi(\alpha),\alpha) = 0.
\]
Next, assuming that $\partial_\psi P$ is invertible, we solve for $\psi'(\alpha)$ via
\[
 \psi'(\alpha) = - \partial_\psi P(\psi,\alpha)^{-1} \partial_\alpha P(\psi(\alpha),\alpha).
\]
Thus it holds that
\begin{align*}
&  \left\langle \partial_\psi J(\psi(\alpha),\alpha), \psi'(\alpha) \delta_\alpha \right\rangle_{\Upsilon^\ast,\Upsilon}\\ 
&\ = \left\langle - \partial_\psi J(\psi(\alpha), \alpha), \partial_\psi P(\psi(\alpha),\alpha)^{-1} \partial_\alpha P(\psi(\alpha),\alpha) \delta_\alpha \right\rangle_{\Upsilon^\ast,\Upsilon},
\end{align*}
which can be rewritten as
\begin{equation}
\label{eq:J_psi_prime}
\begin{split}
& \ \left\langle \partial_\psi J(\psi(\alpha),\alpha), \psi'(\alpha) \delta_\alpha \right\rangle_{\Upsilon^\ast,\Upsilon}\\
&\ = \left\langle - \partial_\alpha P(\psi(\alpha),\alpha)^\ast \partial_\psi P(\psi(\alpha),\alpha)^{-\ast} \partial_\psi J(\psi(\alpha), \alpha), \delta_\alpha \right\rangle_{H_t^{-1},H^1_t}.
\end{split}
\end{equation}
Here we abbreviate $$\partial_\psi P(\psi(\alpha),\alpha)^{-\ast}:=(\partial_\psi P(\psi(\alpha),\alpha)^\ast)^{-1} = (\partial_\psi P(\psi(\alpha),\alpha)^{-1})^\ast.$$ 
Substituting \eqref{eq:J_psi_prime} into equation~\eqref{eq:J_prime2}, we see that critical points of~\eqref{eq:minprob} satisfy
\begin{equation}
 \label{eq:stat_point}
\begin{split}
 0=& \ \langle \mathcal{J}'(\alpha), \delta_\alpha \rangle_{H^{-1}_t, H^1_t} = \langle \partial_\alpha J(\psi(\alpha),\alpha), \delta_\alpha \rangle_{H^{-1}_t, H^1_t} +\\ 
& \ \langle -\partial_\alpha P(\psi(\alpha),\alpha)^* \partial_\psi P(\psi(\alpha),\alpha)^{-\ast} \partial_\psi J(\psi(\alpha), \alpha), \delta_\alpha \rangle_{H^{-1}_t,H^1_t}
\end{split}
\end{equation}
for all $\delta_\alpha \in H^1(0,T)$ such that $\delta_\alpha(0) = 0$. In order to obtain \eqref{eq:stat_point} in a more explicit form, we (formally) compute the derivative
\begin{align}\label{eq:linNLS}
 \partial_\psi P(\psi,\alpha) \xi = i \partial_t \xi -H \xi - \alpha(t) V (x) \xi - \lambda (\sigma+1) |\psi|^{2\sigma} \xi - \lambda \sigma |\psi|^{2\sigma-2} \psi^2 \conj{\xi},
 \end{align}
acting on $\xi \in L^2(\R^d) \subset \Sigma^*$. Analogously, we find
\begin{align*}
 \partial_\alpha P(\psi,\alpha) = - V(x) \psi.
\end{align*}
Next, we define 
\begin{equation}\label{defphi}
 \varphi := \partial_\psi P(\psi(\alpha),\alpha)^{-\ast} \partial_\psi J(\psi(\alpha), \alpha),
\end{equation}
which, in view of \eqref{eq:stat_point}, allows us to express $\mathcal J'(\alpha)\in(H^1(0,T))^*$ in the following form:
\begin{equation}
\label{eq:grad_weak}
 \mathcal{J}'(\alpha) = \partial_\alpha J(\psi(\alpha),\alpha) - \partial_\alpha P(\psi(\alpha),\alpha)^* \varphi.
\end{equation}
We consequently obtain $ \mathcal{J}'(\alpha)$ by explicitly calculating the right hand side of this equation (given in \eqref{eq:J_prime} below), provided we can determine $\varphi$. 

In order to perform this calculation, we recall that the duality pairing between $\xi \in L^2(\R^d) \subset \Sigma^*$ and $\psi \in \Sigma$ can be expressed 
by the inner product defined in~\eqref{eq:inner_prod}. Thus, \eqref{defphi} implies
\begin{equation} \label{eq:weak_adjoint}
 \langle \varphi, \partial_\psi P(\psi(\alpha),\alpha) \delta_\psi \rangle_{L^2_tL^2_x} = \langle \partial_\psi J(\psi(\alpha),\alpha), \delta_\psi \rangle_{L^2_tL^2_x},
\end{equation}
for all test functions $\delta_\psi\in \Upsilon(0,T)$ such that $\delta_\psi(0) = 0$. This is the correct ``tangent space" for $\psi$ in view of the Cauchy data
\[
\psi(0) + \delta_\psi(0) = \psi_0 \text{ and } \psi(0) = \psi_0.
\]
By virtue of the symmetry of the {\it linearized operator} $\partial_\psi P(\psi(\alpha),\alpha)$, 
equation~\eqref{eq:weak_adjoint} corresponds to the weak formulation of the following \emph{adjoint equation}: 
\begin{equation}\label{eq:adjoint1}
\left \{
\begin{aligned}
& i \partial_t \varphi - H \varphi - \alpha(t) V (x)\varphi - \lambda (\sigma+1) |\psi|^{2\sigma} \varphi - \lambda \sigma |\psi|^{2\sigma-2} \psi^2 \conj {\varphi} = \frac{\delta J(\psi,\alpha)}{\delta\psi(t)},\\
& \text{for all $t\in[0,T]$ and with data: } \varphi(T) = i \frac{\delta J(\psi,\alpha)}{\delta\psi(T)}.
\end{aligned}
\right.
\end{equation}
Here, $ \frac{\delta J(\psi,\alpha)}{\delta\psi(t)}$ denotes the first variation of $J(\psi, \alpha)$ 
with respect to the value of $\psi(t)\in H^1(\R^d)$, where $\psi$ is the solution of \eqref{eq:schroed} with control $\alpha$. 
Likewise, $ \frac{\delta J(\psi,\alpha)}{\delta\psi(T)}$ denotes the first variation with respect to solutions of \eqref{eq:schroed} evaluated at the final time $t=T$.
Explicitly, these derivatives are given by
\begin{equation}\label{eq:J_psi1}
\begin{split}
 \frac{\delta J(\psi,\alpha)}{\delta\psi(t)} = & \ 4 (\dot{\alpha}(t))^2  \left( \int_{\R^d} V (x)|\psi(t,x)|^2 dx \right) V(x)\psi (t, x)\\
\equiv  & \  4 (\dot{\alpha}(t))^2 \omega(t) V(x)\psi (t, x),
\end{split}
 \end{equation}
in view of the definition \eqref{eq:weight}, and 
\begin{align}
\label{eq:J_psi2}
 \frac{\delta J(\psi,\alpha)}{\delta\psi(T)} &= 4 \langle \psi(T, \cdot), A \psi(T, \cdot) \rangle_{L^2_x} A \psi(T, x).
\end{align}
The system \eqref{eq:adjoint1} consequently defines a Cauchy problem for $\varphi$ with data given at $t=T$, the final time. 
Thus, one needs to solve \eqref{eq:adjoint1} backwards in time, a common feature of adjoint systems for time-dependent phenomena.
\begin{remark}\label{rem:lag}
In fact, $\varphi$ can also be seen as a \emph{Lagrange multiplier} within the Lagrangian formulation of the optimal control problem. In oder to see this, one 
defines the Lagrangian
\[
L(\psi,\alpha,\varphi) = J(\psi,\alpha) - \langle \varphi, P(\psi,\alpha) \rangle_{L^2_t L^2_x},
\]
where $P(\psi, \alpha)$ is the nonlinear Schr\"odinger equation given in \eqref{eq:equation}.
Formally, the Euler--Lagrange equations associated to $L(\psi,\alpha,p)$ yield~\eqref{eq:grad_weak} and~\eqref{eq:adjoint1}. In Section~\ref{sec:num} we shall use the Lagrangian formulation 
to formally compute the Hessian of the reduced objective functional $\mathcal{J}(\alpha)$.\end{remark}

In the next subsection, we shall set up an existence theory for \eqref{eq:adjoint1}, which in turn will be used to rigorously justify the above derivation in Section~\ref{sec:derivative} below.

\subsection{Existence of solutions to the adjoint equation}\label{ssec:adjoint}

In order to obtain existence of solutions to \eqref{eq:adjoint1}, we need sufficiently high regularity of $\psi$, the solution of the Gross--Pitaevskii equation \eqref{eq:schroed}. 
For this purpose, for every $m\in \N$ we define
\begin{align*}
 \Sigma^m :=& \left\{ \psi \in L^2(\R^d)\ : \ x^{j} \nabla^{k} \psi \in L^2(\R^d) \text{ for all multi-indices $j$ and $k$ with }\right.\\&\qquad\qquad \left. |j| + |k| \le m  \right\},
\end{align*}
equipped with the norm (note that $\Sigma^1 \equiv \Sigma$): 
\[
 \| \psi \|_{\Sigma^m} := \sum_{|j| + |k| \le m} \left\| x^{j} \nabla^{k} \psi \right\|_{L^2_x}.
\]
\begin{remark} If the external potential $U(x)$ were in $L^\infty (\R^d)$, it would be enough to work in the space $H^m(\R^d)$ instead of $\Sigma^m$. 
In the presence of an external subquadratic potential, however, we also require control of higher moments of the wave function $\psi$ with respect to $x$.
\end{remark}

\begin{lemma} \label{lem:lin_Sigma^m} Let $S(t)$ be given by \eqref{eq:schg} with $U\in C^\infty(\R^d;\R)$ and subquadratic. Then, there exists a constant $c > 0$ such that
\[
 \|S(t) \psi_0 \|_{\Sigma^m} \le e^{ct }\|\psi_0\|_{\Sigma^m},
\]
for all $t\in [0,\infty)$ and $\psi_0 \in \Sigma^m$.
\end{lemma}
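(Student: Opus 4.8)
The plan is to derive an a priori differential inequality for the full $\Sigma^m$-norm of $\psi(t):=S(t)\psi_0$, which solves the linear equation $i\partial_t\psi = H\psi$, and then to conclude by Gronwall's inequality. The starting point is the observation, already exploited for $m=1$ in the proof of Lemma~\ref{lem:tilde_omega}, that $S(t)$ is unitary on $L^2(\R^d)$ but fails to commute with the weighted derivatives $x^j\nabla^k$; the growth of $\|S(t)\psi_0\|_{\Sigma^m}$ is therefore governed entirely by the commutators of these operators with $H$.

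Concretely, for every pair of multi-indices $(j,k)$ with $|j|+|k|\le m$ I set $u_{j,k}(t):=x^j\nabla^k\psi(t)$ and compute, using $i\partial_t\psi=H\psi$,
\[
 i\partial_t u_{j,k} = x^j\nabla^k H\psi = H u_{j,k} + [x^j\nabla^k,H]\psi .
\]
Testing against $u_{j,k}$ in the real $L^2$-inner product \eqref{eq:inner_prod} and using that $H$ is self-adjoint, the contribution of $Hu_{j,k}$ vanishes, leaving (after regularizing to handle the zeros of $\|u_{j,k}\|_{L^2_x}$)
\[
 \tfrac{d}{dt}\|u_{j,k}(t)\|_{L^2_x} \le \| [x^j\nabla^k,H]\psi(t)\|_{L^2_x}.
\]
Summing over all admissible $(j,k)$ yields $\tfrac{d}{dt}\|\psi(t)\|_{\Sigma^m}\le c\,\|\psi(t)\|_{\Sigma^m}$ as soon as the right-hand side is controlled by the $\Sigma^m$-norm, and Gronwall's inequality then gives the claimed bound $\|S(t)\psi_0\|_{\Sigma^m}\le e^{ct}\|\psi_0\|_{\Sigma^m}$.

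The key step, which I regard as the main obstacle, is the commutator estimate $\|[x^j\nabla^k,H]\psi\|_{L^2_x}\le C\,\|\psi\|_{\Sigma^m}$ for all $|j|+|k|\le m$, i.e.\ the fact that the system of monomials of total order $\le m$ is \emph{closed} under commutation with $H=-\tfrac12\Delta+U$. Writing $[x^j\nabla^k,H]=-\tfrac12[x^j,\Delta]\nabla^k+x^j[\nabla^k,U]$, one checks that $[x^j,\Delta]\nabla^k$ is a differential operator all of whose monomials $x^{a}\nabla^{b}$ satisfy $|a|+|b|\le|j|+|k|$, since the loss of one power of $x$ in $[x^j,\Delta]$ is exactly compensated by a gain of one derivative. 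For the potential term the Leibniz rule gives $x^j[\nabla^k,U]=\sum_{0\neq\beta\le k}\binom{k}{\beta}x^j(\nabla^\beta U)\nabla^{k-\beta}$; here the subquadraticity of $U$ is decisive. For $|\beta|\ge2$ the factor $\nabla^\beta U$ is bounded and the differential order drops, while the borderline case $|\beta|=1$ produces a coefficient $\partial_l U$ obeying the linear growth bound $|\partial_l U(x)|\le C(1+|x|)$, so that $\|x^j(\partial_l U)\nabla^{k-1}\psi\|_{L^2_x}$ is dominated by weighted norms $\|x^{j+e_i}\nabla^{k-1}\psi\|_{L^2_x}$ of total order $\le m$. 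Thus no monomial of order exceeding $m$ is generated and every term is bounded by $\|\psi\|_{\Sigma^m}$.

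Finally, to make these computations rigorous one should first establish the estimate for sufficiently smooth and rapidly decaying data — e.g.\ $\psi_0\in\mathcal S(\R^d)$, which is invariant under $S(t)$ for subquadratic $U$ — so that the integrations by parts and the differentiation of $t\mapsto\|u_{j,k}(t)\|_{L^2_x}$ are justified, and then extend the inequality to arbitrary $\psi_0\in\Sigma^m$ by density, using that the bound already exhibits $S(t)$ as a uniformly bounded operator from a dense subspace of $\Sigma^m$ into $\Sigma^m$. Alternatively, one may avoid differentiating the norm and instead iterate the Duhamel-type commutator identities for $[\nabla,S(t)]$ and $[x,S(t)]$ derived in Lemma~\ref{lem:tilde_omega}, which already furnish the case $m=1$ and set the pattern for an induction on the total order.
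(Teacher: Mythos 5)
Your proposal is correct and follows exactly the route the paper indicates for this lemma (the paper only sketches it): differentiate the $\Sigma^m$-norm in time, control the commutators $[x^j\nabla^k,H]$ using the subquadraticity of $U$ so that the system of weighted derivatives of total order $\le m$ closes, and conclude by Gronwall's inequality. The additional care you take with the regularization of the norm, the density argument for Schwartz data, and the alternative induction on the Duhamel commutator identities of Lemma~\ref{lem:tilde_omega} supplies precisely the details the paper omits.
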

The proof of this lemma can be deduced by differentiating the $\Sigma^m$--norm with respect to time and applying Gronwall's inequality. It consequently implies 
the following regularity result for solutions to \eqref{eq:schroed}:
\begin{lemma}
\label{lem:Hm_solution}
Let $\lambda \ge 0$, $\sigma\in \N$ with $\sigma<2/(d-2)$, and $U \in C^\infty(\R^d)$ be subquadratic. For $m>d/2$, let $\psi_0 \in \Sigma^m$, and $V\in W^{m, \infty}(\R^d)$. 
Then the mild solution of \eqref{eq:schroed} satisfies $\psi\in L^\infty(0,T;\Sigma^m)$.
\end{lemma}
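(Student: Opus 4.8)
The plan is to propagate the $\Sigma^m$--regularity of the initial datum through the Duhamel representation of the mild solution, using the linear growth estimate of Lemma~\ref{lem:lin_Sigma^m} for the free propagator $S(t)$ and treating the nonlinearity and the control term as perturbations measured in $\Sigma^m$. Concretely, I would apply the $\Sigma^m$--norm to
\[
\psi(t)=S(t)\psi_0-i\int_0^t S(t-s)\bigl(\lambda|\psi(s)|^{2\sigma}\psi(s)+\alpha(s)V\psi(s)\bigr)\,ds,
\]
(equivalently, differentiate $t\mapsto\|\psi(t)\|_{\Sigma^m}$ along the flow, exactly as in the hint for Lemma~\ref{lem:lin_Sigma^m}), so that Lemma~\ref{lem:lin_Sigma^m} bounds each free evolution by $e^{c(t-s)}$ times the $\Sigma^m$--norm of its argument. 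The whole matter then reduces to estimating $\||\psi|^{2\sigma}\psi\|_{\Sigma^m}$ and $\|V\psi\|_{\Sigma^m}$ in terms of $\|\psi\|_{\Sigma^m}$.

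The two multiplicative facts I would need are that, for $m>d/2$, the space $\Sigma^m$ is closed under products with the estimate $\|fg\|_{\Sigma^m}\le C\|f\|_{\Sigma^m}\|g\|_{\Sigma^m}$ (a Banach algebra), and that multiplication by $V\in W^{m,\infty}(\R^d)$ is bounded on $\Sigma^m$. Both follow by expanding $x^{j}\nabla^{k}(\,\cdot\,)$ with the Leibniz rule: since every resulting summand has total order $|j|+|k_1|+|k_2|\le m$, one can keep the full weight $x^{j}$ together with one factor so that it lies in a defining seminorm of $\Sigma^m$, hence in $L^2(\R^d)$, while the complementary, lower--order factor lies in $L^\infty(\R^d)$ by the embedding $H^m(\R^d)\hookrightarrow L^\infty(\R^d)$ (valid precisely because $m>d/2$), the intermediate derivatives being redistributed via the Gagliardo--Nirenberg inequality~\eqref{eq:GN}. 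The hypothesis $\sigma\in\N$ enters here essentially: it makes $|\psi|^{2\sigma}\psi=\psi^{\sigma+1}\conj{\psi}^{\,\sigma}$ a genuine polynomial in $\psi$ and $\conj\psi$, so the product (Moser--type) estimate applies verbatim and no difficulty from differentiating a non--smooth power arises. This yields the tame bounds
\[
\bigl\||\psi|^{2\sigma}\psi\bigr\|_{\Sigma^m}\le C\|\psi\|_{L^\infty_x}^{2\sigma}\|\psi\|_{\Sigma^m},\qquad \|V\psi\|_{\Sigma^m}\le C\|V\|_{W^{m,\infty}}\|\psi\|_{\Sigma^m},
\]
and, inserted above, the integral inequality
\[
\|\psi(t)\|_{\Sigma^m}\le e^{cT}\|\psi_0\|_{\Sigma^m}+C\int_0^t\bigl(\|\psi(s)\|_{L^\infty_x}^{2\sigma}+|\alpha(s)|\bigr)\|\psi(s)\|_{\Sigma^m}\,ds.
\]

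The main obstacle is that the coefficient $\|\psi(s)\|_{L^\infty_x}^{2\sigma}$ is itself controlled only by $\|\psi\|_{\Sigma^m}$, so the inequality is superlinear in the top norm and Gronwall does not close unless $\int_0^T\|\psi(s)\|_{L^\infty_x}^{2\sigma}\,ds$ is known to be finite a priori. I would resolve this by a local--existence plus continuation argument: a contraction fixed point for the Duhamel map in $C([0,\tau];\Sigma^m)$, with $\tau$ depending only on $\|\psi_0\|_{\Sigma^m}$ and $\|\alpha\|_{L^\infty_t}$ (both finite, the latter since $H^1(0,T)\hookrightarrow C([0,T])$), produces a local $\Sigma^m$--solution which, by the uniqueness already available in the energy space, coincides with the given mild solution and persists as long as $\|\psi(t)\|_{\Sigma^m}$ stays bounded. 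It then suffices to bound $\|\psi\|_{L^\infty_x}$ on $[0,T]$ using only the lower--order theory: as soon as $\psi\in L^\infty(0,T;\Sigma^{m_0})$ for some $m_0>d/2$ one has $\Sigma^{m_0}\hookrightarrow L^\infty(\R^d)$, the integrand becomes bounded, and Gronwall closes the bound for every $m\ge m_0$. Securing this first exponent $m_0$ is immediate for $d=1$ (where $\Sigma\hookrightarrow L^\infty$ and the global energy--space bound of~\cite{caze} applies) and, for $d=2,3$, follows from a short bootstrap resting on the same Strichartz estimates already invoked in Proposition~\ref{prop:NLS_sol}. Once $\|\psi\|_{L^\infty_tL^\infty_x}<+\infty$ is secured, Gronwall applied to the displayed inequality gives a bound on $\|\psi(t)\|_{\Sigma^m}$ uniform on $[0,T]$, that is $\psi\in L^\infty(0,T;\Sigma^m)$, as claimed.
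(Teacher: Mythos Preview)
Your proposal is correct and follows essentially the same approach indicated by the paper, which simply states that the result ``can be proved by following the same arguments as in \cite[Theorem 5.5.1]{caze}'' once Lemma~\ref{lem:lin_Sigma^m} is available. Your sketch is precisely an adaptation of that Cazenave argument to the $\Sigma^m$ setting: the Moser-type tame estimate for the polynomial nonlinearity (using $\sigma\in\N$ and $m>d/2$), the bounded multiplication by $V\in W^{m,\infty}$, local existence via a fixed point in $C([0,\tau];\Sigma^m)$, and the continuation via Gronwall once $\|\psi\|_{L^\infty_tL^\infty_x}$ is controlled from the lower-order theory.
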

In view of Lemma \ref{lem:lin_Sigma^m}, this result can be proved by following the same arguments as in \cite[Theorem 5.5.1]{caze}. 
Having obtained $\psi \in L^\infty(0,T;\Sigma^m)$, we infer $\psi \in L^\infty((0,T)\times\R^d)$ by the Sobolev embedding $H^m(\R^d)\hookrightarrow L^\infty(\R^d)$ 
whenever $m>d/2$. Thus, all the $\psi$--dependent coefficients appearing in adjoint equation \eqref{eq:adjoint1} are indeed in $L^\infty$.
\begin{remark}
Note that Lemma \ref{lem:Hm_solution} requires us to impose $\sigma\in\N$, which together with the condition $\sigma<2/(d-2)$ necessarily implies $d\le 3$. 
The reason is that for general $\sigma>0$ (not necessarily an integer) the nonlinearity $| \psi |^{2\sigma}\psi$ is not locally Lipschitz in $\Sigma^m$ (cf. Lemma \ref{lem:tech_nonlin}) and 
the life-span of solution $\psi(t,\cdot) \in \Sigma^m$ is in general not known, see \cite{caze} for more details. \end{remark}

From now on, we shall always assume that $V\in W^{m, \infty}(\R^d)$ for $m>d/2$ and $U \in C^\infty(\R^d)$ subquadratic.
With the above regularity result at hand, classical semigroup theory \cite{pazy} allows us to construct a solution to the adjoint problem.

\begin{proposition}\label{prop:soladjoint} Let $\lambda \ge 0$, $\sigma\in \N$ with $\sigma<2/(d-2)$, and $U \in C^\infty(\R^d)$ be subquadratic. 
For $m>d/2$, let $\psi_0 \in \Sigma^m$, $V\in W^{m, \infty}(\R^d)$. Then, \eqref{eq:adjoint1} admits a unique mild solution
\[
 \varphi\in C([0,T];L^2(\R^d)).
\]
\end{proposition}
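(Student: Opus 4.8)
The plan is to apply the standard machinery for linear inhomogeneous Schrödinger-type evolution equations, treating \eqref{eq:adjoint1} as a linear (albeit nonautonomous and non-complex-linear) Cauchy problem for $\varphi$ with the coefficients supplied by the already-fixed solution $\psi$ of \eqref{eq:schroed}. By Lemma~\ref{lem:Hm_solution}, under the stated hypotheses $\psi\in L^\infty(0,T;\Sigma^m)$ with $m>d/2$, so the Sobolev embedding $H^m(\R^d)\hookrightarrow L^\infty(\R^d)$ gives $\psi\in L^\infty((0,T)\times\R^d)$. Consequently all the multiplicative coefficients $\lambda(\sigma+1)|\psi|^{2\sigma}$ and $\lambda\sigma|\psi|^{2\sigma-2}\psi^2$, as well as $\alpha(t)V(x)$, are bounded (the potential term via $V\in W^{m,\infty}\subset L^\infty$ and $\alpha\in H^1(0,T)\hookrightarrow C([0,T])$), and the source terms \eqref{eq:J_psi1}, \eqref{eq:J_psi2} and the terminal datum $\varphi(T)=i\,\delta J/\delta\psi(T)$ lie in $L^2(\R^d)$ for a.e.\ $t$.

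The concrete steps I would carry out are as follows. First, reverse time by setting $\tilde\varphi(t):=\varphi(T-t)$ to convert the backward problem into a forward Cauchy problem with initial datum at $t=0$; this is the ``solve backwards in time'' remark made after \eqref{eq:adjoint1}. Second, invoke classical semigroup theory \cite{pazy}: the free propagator $S(t)=e^{-itH}$ generated by $H=-\tfrac12\Delta+U$ is a unitary $C_0$-group on $L^2(\R^d)$, and I would write the mild (Duhamel) formulation
\begin{equation*}
 \varphi(t) = S(t-T)\,\varphi(T) + i\int_t^T S(t-s)\,\big(B(s)\varphi(s) - F(s)\big)\,ds,
\end{equation*}
where $B(s)$ collects the bounded multiplication operators acting on $\varphi$ and its conjugate $\conj\varphi$, and $F(s)=\delta J/\delta\psi(s)$. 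The nonstandard feature is the presence of the term $\lambda\sigma|\psi|^{2\sigma-2}\psi^2\conj\varphi$, which is only $\R$-linear, not $\C$-linear, in $\varphi$. This is handled exactly as in the paper's convention: working over the real Hilbert space $L^2(\R^d)$ with the real inner product \eqref{eq:inner_prod}, the map $\varphi\mapsto B(s)\varphi$ is a bounded $\R$-linear operator with operator norm uniformly bounded on $[0,T]$ by a constant depending on $\|\psi\|_{L^\infty_tL^\infty_x}$, $\|V\|_{L^\infty_x}$ and $\|\alpha\|_{L^\infty_t}$.

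Third, existence and uniqueness of the mild solution $\varphi\in C([0,T];L^2(\R^d))$ then follow from a standard contraction-mapping argument: the Duhamel map is a contraction on $C([0,\tau];L^2(\R^d))$ for $\tau$ small (because $S(t)$ is an isometry on $L^2$ and $B(s)$ is uniformly bounded, so the integral operator has norm $\le \tau\sup_s\|B(s)\|$), and since the bound on $\|B(s)\|$ is uniform in $t$, one iterates over finitely many subintervals of length $\tau$ to cover $[0,T]$, or equivalently applies Gronwall's inequality directly to obtain a global a priori bound. I expect the main (though still routine) obstacle to be purely bookkeeping: verifying that the inhomogeneity $F$ is genuinely in, say, $L^1(0,T;L^2(\R^d))$ so that the Duhamel integral is well defined and continuous in $t$. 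For the terminal-time source \eqref{eq:J_psi2} this is immediate since it is a fixed $L^2$ function; for the running source \eqref{eq:J_psi1} one uses $\dot\alpha\in L^2(0,T)$ together with the uniform bound $|\omega(t)|\le\|V\|_{L^\infty_x}\|\psi_0\|_{L^2_x}^2$ and $\psi\in L^\infty_tL^\infty_x$, which gives $F\in L^2(0,T;L^2(\R^d))\subset L^1(0,T;L^2(\R^d))$. With these integrability facts in place, the cited semigroup result \cite{pazy} applies verbatim and yields the claimed unique mild solution.
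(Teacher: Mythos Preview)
Your proposal is correct and takes essentially the same approach as the paper's proof: both exploit Lemma~\ref{lem:Hm_solution} and the Sobolev embedding to obtain bounded coefficients, handle the $\conj\varphi$-term by regarding the equation as $\R$-linear on $L^2(\R^d)$ with the inner product~\eqref{eq:inner_prod}, and then apply semigroup/Duhamel theory from \cite{pazy} after checking that the source lies in $L^1(0,T;L^2(\R^d))$. The only cosmetic difference is that the paper first builds a two-parameter propagator $F(t,s)$ for the homogeneous linearized equation (via Proposition~1.2, Chapter~3 of \cite{pazy}) and then writes Duhamel's formula with $F(t,s)$, whereas you run the contraction directly against the free group $S(t)$ and iterate on subintervals; the two routes are equivalent.
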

\begin{proof} First, we study the homogenous equation $ \partial_\psi P(\psi(\alpha),\alpha)\xi=0$, associated to \eqref{eq:adjoint1}. It can be written as
 \[
 \partial_t \xi =  -iH\xi + B(t)\xi,
 \] 
where
 \begin{align*}
 B(t)\xi :=  -i\left(\lambda (\sigma+1) |\psi|^{2\sigma}\xi  + \lambda \sigma |\psi|^{2\sigma-2} \psi^2 \conj {\xi} + \alpha(t) V (x)\xi\right).
\end{align*}
The operator $-iH: \Sigma^2\rightarrow L^2(\R^d)$ is simply the generator of the Schr\"odinger group $S(t)=e^{-iHt}$. On the other hand, for any $t\in[0,T]$, 
$B(t)$ is a linear operator on the real vector space $L^2(\R^d)$, equipped with the inner product \eqref{eq:inner_prod} (the same would not be true if we would consider $L^2(\R^d)$ as a complex 
vector space). In addition, $B(t)^*=B(t)$ is symmetric with respect to this inner product and the same is true for $iB(t)$. 
Since $V\in W^{m,\infty}(\R^d)$, $\alpha\in L^\infty(0,T)$ by assumption and $\psi\in L^\infty((0,T)\times\R^d)$ in view of Lemma \ref{lem:Hm_solution}, we infer $B \in L^\infty(0,T; \mathcal{L}(L^2(\R^d))).$ The operator 
$B(t)$ may therefore be considered as a (time-dependent) perturbation of the generator $-iH$.

Following the construction given in Proposition~1.2, Chapter 3 of \cite{pazy}, we obtain the existence of a propagator $F(t,s)$, i.e. 
a family of bounded operators
\[
 \{ F(t,s): L^2(\R^d) \rightarrow L^2(\R^d)\}_{s,t\in[0,T]}
\]
which are strongly continuous in time and satisfy $ F(t,s)=F(t,r)F(r,s)$. 
This propagator $F(t,s)$ is implicitly given by
\[
 F(t,s) = e^{-iH (t-s)} + \int_s^t e^{-iH (t-\tau)}B(\tau) F(\tau,s)\;d\tau
\]
and solves the homogeneous linearized equation in the sense that
\[
 \frac{d}{dt}F(t,s)\xi = \pare{-iH  +B(t)}F(t,s)\xi
\]
weakly in $(\Sigma^2)^*$ for every $\xi \in L^2(\R^d)$ and almost every $t\in[0,T]$. Clearly, it provides a unique mild solution $\xi (t) = F(t,s) \varphi(s)$ of the homogenous equation. 
Duhamel's formula applied to the adjoint problem~\eqref{eq:adjoint1} consequently yields
\begin{equation}
\label{eq:lin_sol}
 \varphi(t) = iF(t,T)\frac{\delta J(\psi,\alpha)}{\delta\psi(T)} + i \int_t^T F(t,s) \frac{\delta J(\psi,\alpha)}{\delta\psi(s)} \;ds.
\end{equation}
Under our assumptions on $\psi$ and $A$ we have that 
\[
 \frac{\delta J(\psi,\alpha)}{\delta\psi(t)}\in L^1(0,T;L^2(\R^d)), \quad  \frac{\delta J(\psi,\alpha)}{\delta\psi(T)}  \in L^2(\R^d),
\]
which in view of Duhamel's formula~\eqref{eq:lin_sol} implies the existence of a mild solution $\varphi \in C([0,T]; L^2(\R^d))$. Uniqueness follows from linearity and the uniqueness of the homogeneous equation.
\end{proof}

\section{Rigorous characterization of critical points}\label{sec:derivative}

A classical approach for making the derivation of the adjoint system rigorous is based on the implicit function theorem. The latter is used to show that $\partial_\psi P(\psi(\alpha), \alpha)$ is indeed invertible, but it 
requires the identification of a linear function space $X$ such that
\[
 P : \Upsilon(0,T) \times H^1(0,T) \rightarrow X \ ; (\psi, \alpha) \mapsto P(\psi, \alpha) ,
\]
and 
\[
 \partial_\psi P(\psi,\alpha)^{-1} :X \rightarrow \Upsilon(0,T).
\]
In other words, we require the solution of \eqref{eq:adjoint1} with a right hand side in $X$ to be in $\Upsilon(0,T)$. 
It seems, however, that the linearized operator $\partial_\psi P(\psi,\alpha)^{-1}$ is not sufficiently regularizing to allow for an easy identification of $X$. 
Therefore we shall not invoke the implicit function theorem but rather calculate the G\^{a}teaux-derivative $\mathcal J'(\alpha)$ directly. (We do not prove Fr\'{e}chet-differentiability; see Remark~\ref{rem:gateaux} below.) 
To this end, we shall first show that the solution $\psi=\psi(\alpha)$ to \eqref{eq:schroed} depends Lipschitz-continuously on the control parameter $\alpha$. 
This will henceforth be used to estimate the error terms appearing in the derivative of $\mathcal{J}(\alpha)$. 

\subsection{Lipschitz continuity with respect to the control}
\label{sec:lipschitz}

As a first step towards full Lipschitz continuity, we prove local-in-time Lipschitz continuity of $\psi=\psi(\alpha)$ with respect to the control parameter $\alpha$. 
\begin{proposition}
\label{prop:loc_Lipschitz}
 Let $\lambda\ge0$, $\sigma\in \N$ with $\sigma<2/(d-2)$, and $U\in C^\infty(\R^d)$ be subquadratic. For $m>d/2$, let $V\in W^{m,\infty}(\R^d)$ and 
 $\tilde{\psi}, \psi \in L^\infty(0,T; \Sigma^m)$ be two mild solutions to \eqref{eq:schroed}, corresponding to initial data 
 $\tilde{\psi}_0, \psi_0\in \Sigma^m$ and control parameters $\tilde{\alpha}, \alpha \in H^1(0,T)$, respectively. 
 Assume that
 \[\|\tilde{\alpha}\|_{H^1_t}, \|\alpha\|_{H^1_t}, \|\tilde{\psi}(t, \cdot)\|_{\Sigma^m}, \|\psi(t, \cdot)\|_{\Sigma^m}\le M\]
 for some given $M\ge 0$.
 Then there exist $\tau=\tau(M)>0$ and a constant $C=C(M) < +\infty$, such that 
 \begin{equation}
\label{eq:loc_Lipschitz}
  \| \tilde{\psi} - \psi \|_{L^\infty(I_t; \Sigma^m)} \le C \left( \| \tilde{\psi}(t) - \psi(t) \|_{\Sigma^m} + \| \tilde{\alpha} - \alpha \|_{H^1_t} \right),
 \end{equation}
where $I_t := [t,t+\tau]\cap [0,T]$. In particular, the mapping $\alpha \mapsto \psi(\alpha)\in \Upsilon (0,T)$ is continuous with respect to $\alpha\in H^1(0,T)$.
\end{proposition}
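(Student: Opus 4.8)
The plan is to run a Gronwall/absorption argument directly on the difference $w := \tilde{\psi} - \psi$, exploiting that both $\tilde{\psi}$ and $\psi$ are \emph{given} mild solutions (so no fixed-point construction is needed) together with the $\Sigma^m$-regularity from Lemma~\ref{lem:Hm_solution} and the propagator estimate of Lemma~\ref{lem:lin_Sigma^m}. First I would restart Duhamel's formula at the left endpoint $t$ of $I_t$: for $t'\in I_t$,
\[
 w(t') = S(t'-t)\,w(t) - i\int_t^{t'} S(t'-s)\Big( \lambda\big(|\tilde{\psi}|^{2\sigma}\tilde{\psi} - |\psi|^{2\sigma}\psi\big) + \big(\tilde{\alpha} V\tilde{\psi} - \alpha V\psi\big)\Big)(s)\,ds.
\]
Taking the $\Sigma^m$-norm and invoking Lemma~\ref{lem:lin_Sigma^m} to bound the propagator by $e^{c\tau}$ on a window of length $\tau$ reduces the whole estimate to controlling the two source terms in $\Sigma^m$.

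For the control term I would split $\tilde{\alpha} V\tilde{\psi} - \alpha V\psi = \tilde{\alpha} V w + (\tilde{\alpha} - \alpha)V\psi$. Multiplication by $V\in W^{m,\infty}(\R^d)$ is bounded on $\Sigma^m$ (by the Leibniz rule, derivatives of $V$ up to order $m$ land in $L^\infty$, while the weight $x^{j}$ and the remaining derivatives act on the other factor), so the two pieces are controlled by $\|\tilde{\alpha}\|_{L^\infty_t}\|V\|_{W^{m,\infty}}\|w(s)\|_{\Sigma^m}$ and $|\tilde{\alpha}(s)-\alpha(s)|\,\|V\|_{W^{m,\infty}}\|\psi(s)\|_{\Sigma^m}$ respectively. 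Using the embedding $H^1(0,T)\hookrightarrow L^\infty(0,T)$ together with the uniform bound $M$, the first is $\le C(M)\|w(s)\|_{\Sigma^m}$, while integrating the second over $I_t$ and applying Cauchy--Schwarz gives $\le C(M)\sqrt{\tau}\,\|\tilde{\alpha}-\alpha\|_{H^1_t}$.

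The genuinely delicate ingredient is the nonlinear term, where I need the local Lipschitz bound $\big\||\tilde{\psi}|^{2\sigma}\tilde{\psi} - |\psi|^{2\sigma}\psi\big\|_{\Sigma^m} \le C(M)\|w\|_{\Sigma^m}$ on the ball $\{\|\cdot\|_{\Sigma^m}\le M\}$. Writing $F(z)=|z|^{2\sigma}z$ and using the fundamental theorem of calculus, $F(\tilde{\psi})-F(\psi)$ is a sum of products of $w$ (or $\conj w$) with $2\sigma$ factors drawn from $\{\psi,\tilde{\psi},\conj{\psi},\conj{\tilde{\psi}}\}$, in accordance with~\eqref{eq:linNLS}. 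Controlling such products in $\Sigma^m$ requires Moser-type (tame) product estimates adapted to the weights $x^{j}$, distributing derivatives so that at most one factor carries the top-order derivative while the others are placed in $L^\infty$ via $\Sigma^m\hookrightarrow H^m\hookrightarrow L^\infty$ (here $m>d/2$). This is exactly the point where $\sigma\in\N$ is used, so that $F$ is a genuine polynomial in $\psi$ and $\conj{\psi}$; I regard this product estimate as the main obstacle, and it is isolated in Lemma~\ref{lem:tech_nonlin}.

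Collecting the estimates yields, for all $t'\in I_t$,
\[
 \|w(t')\|_{\Sigma^m} \le e^{c\tau}\|w(t)\|_{\Sigma^m} + C(M)\int_t^{t'}\|w(s)\|_{\Sigma^m}\,ds + C(M)\sqrt{\tau}\,\|\tilde{\alpha}-\alpha\|_{H^1_t}.
\]
Taking the supremum over $t'\in I_t$ and choosing $\tau=\tau(M)$ so small that $C(M)\tau\le \tfrac12$ lets me absorb the integral term into the left-hand side, producing~\eqref{eq:loc_Lipschitz} with a constant $C=C(M)$ that is \emph{independent of} $T$ (which is precisely why the statement is local in time; a direct global Gronwall step would instead give a constant growing like $e^{C(M)T}$). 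Finally, for the ``in particular'' assertion I would take $\tilde{\psi}_0=\psi_0$, so $w(t)=0$ at $t=0$, and chain~\eqref{eq:loc_Lipschitz} over the finitely many windows of length $\tau(M)$ covering $[0,T]$ to obtain global Lipschitz dependence in $L^\infty(0,T;\Sigma^m)\hookrightarrow L^2(0,T;\Sigma)$; the remaining $H^1(0,T;\Sigma^*)$-component of the $\Upsilon(0,T)$-norm is then controlled directly from~\eqref{eq:schroed}, since $\partial_t w = -i\big(Hw + \lambda(F(\tilde{\psi})-F(\psi)) + (\tilde{\alpha} V\tilde{\psi} - \alpha V\psi)\big)$ and each term maps boundedly into $\Sigma^*$.
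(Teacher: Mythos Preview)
Your argument for the local Lipschitz estimate~\eqref{eq:loc_Lipschitz} is essentially the same as the paper's: write Duhamel for the difference, split the control term as $\tilde{\alpha}Vw + (\tilde{\alpha}-\alpha)V\psi$, invoke Lemma~\ref{lem:tech_nonlin} for the nonlinearity, and absorb the $\|w\|$-term on the right by choosing $\tau=\tau(M)$ small. The paper phrases the choice of $\tau$ as the fixed-point time scale (so that both solutions sit in the ball of radius $2M$), but your direct use of the hypothesis $\|\psi\|_{\Sigma^m},\|\tilde{\psi}\|_{\Sigma^m}\le M$ is equally valid and slightly cleaner.

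The gap is in your ``in particular'' paragraph. To chain~\eqref{eq:loc_Lipschitz} over successive windows of length $\tau(M)$ you need \emph{both} $\psi$ and $\tilde{\psi}=\psi(\tilde{\alpha})$ to obey the bound $M$ on each window. For the continuity statement, however, $\tilde{\alpha}$ is an arbitrary perturbation of $\alpha$, and all you know a priori (from Lemma~\ref{lem:Hm_solution}) is that $\psi(\tilde{\alpha})\in L^\infty(0,T;\Sigma^m)$ with some bound depending on $\tilde{\alpha}$; this bound is not uniform as $\tilde{\alpha}\to\alpha$, so $\tau(M)$ could shrink and the number of windows could blow up. Your chaining is therefore circular: you need the uniform bound on $\tilde{\psi}$ to apply~\eqref{eq:loc_Lipschitz}, but you need~\eqref{eq:loc_Lipschitz} (or rather closeness to $\psi$) to get the uniform bound.

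The paper resolves this with a continuation/bootstrap argument: set $t_*$ to be the infimum of times at which $\limsup_{\tilde{\alpha}\to\alpha}\|\tilde{\psi}-\psi\|_{L^\infty(0,t;\Sigma^m)}>0$, fix $M'\ge\|\psi\|_{L^\infty(0,T;\Sigma^m)}$, and note that for $\tilde{\alpha}$ close enough to $\alpha$ one has $\|\tilde{\psi}\|_{\Sigma^m}\le M'+1$ on $[0,t_*-\Delta t]$; then apply~\eqref{eq:loc_Lipschitz} with $M$ replaced by $M'+1$ on $[t_*-\Delta t, t_*-\Delta t+\tau(M'+1)]$ to push past $t_*$, a contradiction. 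This simultaneously propagates the bound and the continuity. Your argument would be repaired by inserting exactly this step in place of the direct chaining; note also that what one obtains this way is continuity (and, a posteriori, the uniform bound of Corollary~\ref{cor:psi_bound} along one-dimensional segments), not yet the global Lipschitz estimate you claim---that comes only in Proposition~\ref{prop:lipschitz_in_alpha} once the uniform bound is in hand.
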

\begin{proof} To simplify notation, let us assume $t+\tau \le T$. By construction, there exists a $\tau>0$ depending only on $M$, such that $\psi|_{I_t}$ is a fixed point of the mapping
 \[
  \psi \mapsto S(\,\cdot\,)\psi_0 - i \int_t^{\, \cdot} S(\cdot-s) \left( \lambda|\psi(s)|^{2\sigma}\psi(s) + \alpha(s) V \psi(s) \right) \;ds,
 \]
 which maps the set $$Y=\{ \psi \in L^\infty(I_t;\Sigma^m)  :  \| \psi \|_{L^\infty(I_t;\Sigma^m)} \le 2 M \}$$ into itself. 
 Of course, the same holds true for $\tilde{\psi}$ and $\tilde{\alpha}$ in place of $\psi$ and $\alpha$, respectively. In particular, the embedding $\Sigma^m(R^d)\hookrightarrow L^\infty(\R^d)$, $m>d/2$, yields
\[
 \| \psi \|_{L^\infty(I_t\times\R^d)} \le 2 C M.
\]
To proceed further, we recall the following result, which can be proved along the lines of \cite[Lemma~4.\,10.\,2]{caze}. 
\begin{lemma}
 \label{lem:tech_nonlin}
Let $M>0$, $\sigma \in \N$, and $m>d/2$. Then there exists a constant $C(M)>0$, such that for all $\psi,\tilde{\psi}\in \Sigma^m$ satisfying $\|\psi\|_{L^\infty_x}, \|\tilde{\psi}\|_{L^\infty_x}\le M$, it holds that
\[
 \big \||\psi|^{2\sigma} \psi - |\tilde{\psi}|^{2\sigma} \tilde{\psi}\big\|_{\Sigma^m} \le C(M) \|\psi-\tilde{\psi}\|_{\Sigma^m}.
\]
In other words, $\psi \mapsto |\psi|^{2\sigma} \psi$ is locally Lipschitz in $\Sigma^m$.
\end{lemma}
Subtracting the two fixed point expressions for $\tilde{\psi}$ and $\psi$ gives
\begin{align*}
&\tilde{\psi}(s) - \psi(s) =  \ S(s-t)(\tilde{\psi}(t)-\psi(t))\\ & \qquad- i\int_0^{s-t} S(s-r)\Big( \lambda(|\tilde{\psi}|^{2\sigma}\tilde{\psi}-|\psi|^{2\sigma}\psi) 
   + V(x) (\tilde{\alpha} \, \tilde{\psi}- \alpha \, \psi) \Big)(\tau) \;d\tau
\end{align*}
for all $s\in[t,t+\tau]$.
Taking the $L^\infty(I_t;\Sigma^m)$-norm and recalling Lemma \ref{lem:lin_Sigma^m}, together with $\|\psi(s)\|_{\Sigma^m},\|\tilde{\psi}(s)\|_{\Sigma^m}\le 2M$, for 
$s\le t+\tau$, yields
 \begin{align*}
  \|\tilde{\psi} - \psi\|_{L^\infty(I_t;\Sigma^m)} & \le C \|\tilde{\psi}(t) - \psi(t)\|_{\Sigma^m} +
   2 M \|\tilde{\alpha}-\alpha\|_{H^1_t} \|V\|_{W^{m,\infty}_x}\\& + C \tau \left( C(2M) + \|\tilde{\alpha}\|_{H^1_t} \|V\|_{W^{m,\infty}_x} \right) \|\tilde{\psi} - \psi\|_{L^\infty(t,t+\tau;\Sigma^m)},
 \end{align*}
 where $C(2M)$ is the constant appearing in Lemma~\ref{lem:tech_nonlin} with $2M$ replacing $M$. Since $\|\tilde{\alpha}\|_{H^1_t}\le M$, 
 the estimate~\eqref{eq:loc_Lipschitz} follows from possibly choosing $\tau$ even smaller.\\
 Finally, we show the continuity of the map $H^1(0,T) \rightarrow \Upsilon(0,T), \alpha \mapsto \psi(\alpha)$. Set
\[
 t_* := \inf \big\{ 0 \le t \le T : \limsup_{\tilde{\alpha}\to\alpha} \|\tilde{\psi} - \psi\|_{L^\infty(0,t;\Sigma^m)} > 0 \big\},
\]
with the convention $\inf\emptyset := +\infty$. We have to show that $t_* = +\infty$. 
Assuming $t_*\le T <+\infty$, fix $M'\ge M$ such that $\|\psi\|_{L^\infty(0,T;\Sigma^m)} \le M'$, let $\tau'=\tau(M'+1) > 0$ be chosen as above, with $M'+1$ replacing $M$. Furthermore let $\Delta t = \tau'/2$. The definition of of $t_*$ yields
\[
 \limsup_{\tilde{\alpha} \to \alpha} \|\tilde{\psi} - \psi\|_{L^\infty(0,t_* - \Delta t;\Sigma^m)} = 0.
\]
In particular, it holds that $\|\tilde{\psi}\|_{L^\infty(0,t_* - \Delta t;\Sigma^m)} \le M'+1$ for all $(\tilde{\alpha} - \alpha)$ small enough. 
But now we see that the Lipschitz continuity~\eqref{eq:loc_Lipschitz} is satisfied by $\tilde\psi$ and $\psi$ and such controls $\tilde{\alpha}$, $\alpha$ on the interval $[t_* - \Delta t, t_* - \Delta t + \tau']$. Hence
\[
 \limsup_{\tilde{\alpha} \to \alpha} \|\tilde{\psi} - \psi\|_{L^\infty(0,t_* - \Delta t + \tau';\Sigma^m)} = 0,
\]
a contradiction to the definition of $t_*$. Hence we must have $t_* = \infty$, and continuity holds.
\end{proof}
As a direct consequence of this continuity result, we obtain uniform boundedness of the solution $\psi(\alpha)$ on compact sets in $\alpha\in H^1(0,T)$. 
Of course, bounded sets in $H^1(0,T)$ are in general not compact and thus we have to restrict ourselves to finite-dimensional subsets.
\begin{corollary}
 \label{cor:psi_bound}
 Under the assumptions of Proposition~\ref{prop:loc_Lipschitz}, let $\delta_\alpha \in H^1(0,T)$ with $\delta_\alpha(0) = 0$ be a direction of change for $\alpha$ 
 and let $\psi(\alpha+\varepsilon \delta_\alpha)$ be the solution to \eqref{eq:schroed} with control $\alpha+\varepsilon \delta_\alpha$ and initial data $\psi_0\in \Sigma^m$, $m>d/2$. 
 Then there exists $M<\infty$ such that
\[
 \| \psi(\alpha+\varepsilon \delta_\alpha) \|_{L^\infty(0,T;\Sigma^m)} \le M, \quad \forall \, \varepsilon \in [-1,1].
\]
\end{corollary}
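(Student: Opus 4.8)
The plan is to deduce the uniform bound directly from the continuity of the control-to-state map established in Proposition~\ref{prop:loc_Lipschitz}, combined with the compactness of the one-parameter family of controls $\{\alpha+\varepsilon\delta_\alpha : \varepsilon\in[-1,1]\}$.

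First I would observe that, since $\alpha,\delta_\alpha\in H^1(0,T)$, the affine map $\iota:[-1,1]\to H^1(0,T)$, $\iota(\varepsilon):=\alpha+\varepsilon\delta_\alpha$, is Lipschitz (indeed $\|\iota(\varepsilon_1)-\iota(\varepsilon_2)\|_{H^1_t}=|\varepsilon_1-\varepsilon_2|\,\|\delta_\alpha\|_{H^1_t}$) and hence continuous; consequently its image $K:=\iota([-1,1])$ is a compact subset of $H^1(0,T)$. For each fixed $\varepsilon\in[-1,1]$ the control $\iota(\varepsilon)$ lies in $H^1(0,T)$ and the initial datum satisfies $\psi_0\in\Sigma^m$ with $m>d/2$, so Lemma~\ref{lem:Hm_solution} guarantees that the mild solution $\psi(\iota(\varepsilon))$ belongs to $L^\infty(0,T;\Sigma^m)$. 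In particular $\|\psi(\iota(\varepsilon))\|_{L^\infty(0,T;\Sigma^m)}<+\infty$ for every individual $\varepsilon$; the content of the corollary is precisely to upgrade this pointwise finiteness to a bound that is uniform in $\varepsilon$.

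The key point is that the continuation argument in the proof of Proposition~\ref{prop:loc_Lipschitz} in fact establishes continuity of the solution map $\alpha\mapsto\psi(\alpha)$ not merely into $\Upsilon(0,T)$, but into the stronger space $L^\infty(0,T;\Sigma^m)$: this is exactly the statement $\limsup_{\tilde\alpha\to\alpha}\|\tilde\psi-\psi\|_{L^\infty(0,T;\Sigma^m)}=0$ obtained there by showing $t_*=+\infty$. Composing this continuous map with the continuous embedding $\iota$, I obtain that $\varepsilon\mapsto\psi(\iota(\varepsilon))$ is continuous from $[-1,1]$ into $L^\infty(0,T;\Sigma^m)$. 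Since $[-1,1]$ is compact, its continuous image is a compact, hence bounded, subset of $L^\infty(0,T;\Sigma^m)$. Equivalently, $\varepsilon\mapsto\|\psi(\iota(\varepsilon))\|_{L^\infty(0,T;\Sigma^m)}$ is a continuous real-valued function on the compact interval $[-1,1]$ and therefore attains a finite maximum, which we take as $M$.

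The only genuine subtlety, and the step I would be most careful about, is ensuring that the continuity invoked above is with respect to the $L^\infty(0,T;\Sigma^m)$ topology rather than the weaker $\Upsilon(0,T)$ topology named in the statement of Proposition~\ref{prop:loc_Lipschitz}. A bound in $\Upsilon(0,T)$ would not control the $L^\infty(0,T;\Sigma^m)$ norm, so the argument truly relies on the stronger conclusion delivered by the $t_*$-continuation in that proof. A naive finite-covering argument based on the local-in-time estimate~\eqref{eq:loc_Lipschitz} alone would be circular here, since the length $\tau=\tau(M)$ of the interval of local Lipschitz continuity itself depends on the bound $M$ one is trying to establish; it is exactly the global continuation already carried out in Proposition~\ref{prop:loc_Lipschitz} that resolves this circularity, which is why I would route the proof through that result rather than re-deriving the estimate by hand.
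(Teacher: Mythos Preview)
Your proof is correct and follows exactly the approach the paper intends: the text preceding the corollary states that it is ``a direct consequence of this continuity result'' applied to compact (here, finite-dimensional) subsets of $H^1(0,T)$, and you have written out precisely that argument. Your observation that one needs continuity of $\alpha\mapsto\psi(\alpha)$ in $L^\infty(0,T;\Sigma^m)$ rather than merely in $\Upsilon(0,T)$---and that the $t_*$-continuation in the proof of Proposition~\ref{prop:loc_Lipschitz} delivers this stronger conclusion even though the proposition's statement only names $\Upsilon(0,T)$---is a correct and worthwhile clarification.
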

\begin{remark}
\label{rem:gateaux}
This bound on finite dimensional subsets of $H^1(0,T)$ is the reason why we can only prove G\^{a}teaux-differentiability. 
If we had a bound on $\psi(\alpha)$ in the $\Sigma^m$--norm which was uniform in $t\le T$ and $\|\alpha\|_{H^1_t}\le M$, we could prove Fr\'{e}chet-differentiability. 
For our further analysis, however, this will not be of any consequence.
\end{remark}
Now we are ready to prove Lipschitz-continuity of the solution $\psi(\alpha)$ with respect to the control parameter $\alpha \in H^1(0,T)$ on the whole control interval $[0,T]$.
\begin{proposition}
 \label{prop:lipschitz_in_alpha}
Let $\lambda\ge0$, $\sigma\in \N$ with $\sigma<2/(d-2)$, and $U\in C^\infty(\R^d)$ be subquadratic. 
For $m > d/2$, let $V\in W^{m,\infty}(\R^d)$, $\psi_0 \in \Sigma^m$, $\alpha\in H^1(0,T)$, and $\psi\equiv \psi(\alpha)\in L^\infty(0,T; \Sigma^m)$ be the solution to \eqref{eq:schroed}. Set $\tilde{\psi} \equiv \psi(\tilde{\alpha})$ where for any $\varepsilon \in [-1,1]$, we let
 $\tilde{\alpha}:=\alpha+\varepsilon\delta_\alpha$ with $\delta_\alpha \in H^1(0,T)$ such that $\delta_\alpha(0)=0$. Then there exists a constant $C>0$, such that
\begin{equation}
 \label{eq:lipschitz_in_alpha}
 \|\tilde{\psi} - \psi\|_{L^\infty(0,T;\Sigma^m)} \le C \|\tilde{\alpha} - \alpha\|_{H^1(0,T)} = C |\varepsilon| \|\delta_\alpha\|_{H^1(0,T)}.
\end{equation}
In other words, the solution to \eqref{eq:schroed} depends Lipschitz-continuously on the control $\alpha$ for each fixed direction $\delta_\alpha$.
\end{proposition}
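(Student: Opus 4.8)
The plan is to bootstrap the local-in-time estimate of Proposition~\ref{prop:loc_Lipschitz} to the whole interval $[0,T]$ by a finite covering argument, exploiting the fact that Corollary~\ref{cor:psi_bound} supplies a bound on the solutions that is \emph{uniform} both in time and in $\varepsilon\in[-1,1]$.

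First I would fix $M<+\infty$ large enough that, by Corollary~\ref{cor:psi_bound}, $\|\psi\|_{L^\infty(0,T;\Sigma^m)}\le M$ and $\|\tilde{\psi}\|_{L^\infty(0,T;\Sigma^m)}\le M$ for all $\varepsilon\in[-1,1]$, and (after possibly enlarging $M$) also $\|\alpha\|_{H^1_t},\|\tilde{\alpha}\|_{H^1_t}\le M$. The decisive point is that this $M$ does not depend on the base point of the time interval. Consequently Proposition~\ref{prop:loc_Lipschitz} furnishes a \emph{single} length $\tau=\tau(M)>0$ and a \emph{single} constant $C=C(M)<+\infty$ for which the local estimate
\[
\|\tilde{\psi}-\psi\|_{L^\infty(I_t;\Sigma^m)}\le C\left(\|\tilde{\psi}(t)-\psi(t)\|_{\Sigma^m}+\|\tilde{\alpha}-\alpha\|_{H^1_t}\right),\quad I_t=[t,t+\tau]\cap[0,T],
\]
holds simultaneously at \emph{every} starting time $t\in[0,T]$.

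Next I would partition $[0,T]$ into $N=\lceil T/\tau\rceil$ consecutive subintervals $[t_k,t_{k+1}]$ with $t_k=k\tau$, and set $e_k:=\|\tilde{\psi}(t_k)-\psi(t_k)\|_{\Sigma^m}$ together with $\eta:=\|\tilde{\alpha}-\alpha\|_{H^1_t}=|\varepsilon|\,\|\delta_\alpha\|_{H^1_t}$ (note that, since $H^1_t=H^1(0,T)$ is the full-interval norm, $\eta$ is the same on every subinterval). Applying the local estimate on $[t_k,t_{k+1}]$ and evaluating at the right endpoint gives the linear recursion $e_{k+1}\le C(e_k+\eta)$, while the common initial data $\tilde{\psi}(0)=\psi_0=\psi(0)$ force $e_0=0$. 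Iterating yields $e_k\le C\eta\,(C^k-1)/(C-1)$, so that $\max_{0\le k\le N}e_k\le C'\eta$ with a finite constant $C'=C'(M,T)$, precisely because the number of steps $N$ is finite. Taking the maximum of the local bounds over $k$ then produces
\[
\|\tilde{\psi}-\psi\|_{L^\infty(0,T;\Sigma^m)}\le\max_{0\le k\le N-1}C(e_k+\eta)\le C''\,\eta=C''\,|\varepsilon|\,\|\delta_\alpha\|_{H^1_t},
\]
which is exactly the claimed estimate~\eqref{eq:lipschitz_in_alpha}.

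The main obstacle, and the reason the argument closes, is the uniformity of the step length $\tau$ across the whole interval. Since $\tau$ and $C$ in Proposition~\ref{prop:loc_Lipschitz} depend only on the bound $M$, and Corollary~\ref{cor:psi_bound} guarantees this bound is uniform in $t$ and in $\varepsilon$, the number of subintervals $N$ remains finite and the geometric accumulation of the constant in the recursion stays controlled. Were $\tau$ permitted to shrink as time advances, the covering could require infinitely many steps and the accumulated constant could blow up; ruling this out is exactly what the a priori bound of Corollary~\ref{cor:psi_bound} secures.
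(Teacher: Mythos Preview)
Your proposal is correct and follows essentially the same approach as the paper: invoke Corollary~\ref{cor:psi_bound} for a uniform-in-$\varepsilon$ bound $M$, extract from Proposition~\ref{prop:loc_Lipschitz} a single step size $\tau=\tau(M)$ and constant $C=C(M)$, and then iterate the local estimate over the finitely many subintervals $[n\tau,(n+1)\tau]$ starting from $e_0=0$. The paper compresses the last step into the phrase ``finite summation of this estimate over intervals $[n\tau,(n+1)\tau]$,'' whereas you spell out the geometric recursion explicitly, but the argument is the same.
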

\begin{proof}
Since Corollary~\ref{cor:psi_bound} provides a uniform (in $\varepsilon$) bound on $\|\tilde{\psi}\|_{L^\infty(0,T;\Sigma^m)}$, 
the quantity $\tau$ in the local Lipschitz estimate~\eqref{eq:loc_Lipschitz} is now independent of $\epsilon$ and $t$ and the estimate indeed holds on every interval $[t, t + \tau]$, i.e.\
\[
 \| \tilde{\psi}- \psi \|_{L^\infty(t,t+\tau;\Sigma^m)} \le C \left( \| \tilde{\psi}(t) - \psi(t) \|_{\Sigma^m} + \| \tilde{\alpha} - \alpha \|_{H^1(t,t+\tau)} \right).
\]
Since both solutions $\tilde{\psi}$ and $\psi$ coincide at $t=0$, finite summation of this estimate over intervals $[n\tau,(n+1)\tau]$ yields \eqref{eq:lipschitz_in_alpha}.
\end{proof}

\subsection{Proof of differentiability and characterization of critical points}\label{sec:differ}

We are now in a position to state the second main result of this work.

\begin{theorem} \label{thm:J_prime} 
Let $\lambda\ge0$, $\sigma \in \N$ with $\sigma<2/(d-2)$, and $U\in C^\infty(\R^d)$ be subquadratic. In addition, let $\psi_0\in \Sigma^m$, $V\in W^{m,\infty}(\R^d)$ for some $m\in \N$, $m \ge 2$, and $\alpha\in H^1(0,T)$.\\ 
Then the solution of \eqref{eq:schroed} satisfies $\psi \in L^\infty(0,T; \Sigma^m)$ and the functional $ \mathcal J(\alpha)$ is G\^{a}teaux-differentiable for all $t\in [0,T]$,  with
\begin{equation}
\label{eq:J_prime}
 \mathcal J'(\alpha) =  \re  \int_{\R^d} \conj{\varphi}(t,x) V(x) \psi(t,x) \;dx - 2 \frac{d}{dt} \left( \dot{\alpha}(t) \pare{\gamma_2 + \gamma_1 \omega^2(t)} \right),
\end{equation}
in the sense of distributions, where $\omega(t)$ is the weight factor defined in \eqref{eq:weight} and $\varphi\in C([0,T];L^2(\R^d))$ is the solution of the adjoint equation
\begin{equation}
\label{eq:adjoint}
 \begin{aligned}
        i \partial_t \varphi= &\  -\frac{1}{2} \Delta \varphi + U(x) \varphi + \alpha(t) V(x) \varphi + \lambda (\sigma+1) |\psi|^{2\sigma} \varphi + \lambda \sigma |\psi|^{2\sigma-2} \psi^2  \conj{\varphi}  \\
        & \ +
         4 \gamma_1 (\dot{\alpha}(t))^2 \, \omega(t) V(x) \psi,
       \end{aligned}
\end{equation}
subject to Cauchy data $\varphi(T, x) =   4i \langle \psi(T, \cdot), A\psi(T, \cdot) \rangle_{L^2_x}\, A\psi(T, x)$.
\end{theorem}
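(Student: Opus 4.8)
The plan is to establish the differentiability formula by directly computing the Gâteaux derivative and then eliminating the state sensitivity through the adjoint (Lagrange multiplier) device already anticipated in \eqref{eq:grad_weak} and Remark~\ref{rem:lag}. The regularity claim $\psi\in L^\infty(0,T;\Sigma^m)$ is immediate from Lemma~\ref{lem:Hm_solution}, since $m\ge 2>d/2$ for $d\le 3$; this in particular gives $\psi\in L^\infty((0,T)\times\R^d)$, so that all $\psi$--dependent coefficients in \eqref{eq:adjoint} are bounded and Proposition~\ref{prop:soladjoint} supplies a unique $\varphi\in C([0,T];L^2(\R^d))$.

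First I would fix a direction $\delta_\alpha\in H^1(0,T)$ with $\delta_\alpha(0)=0$ and set $\psi_\varepsilon:=\psi(\alpha+\varepsilon\delta_\alpha)$ and $\eta_\varepsilon:=\varepsilon^{-1}(\psi_\varepsilon-\psi)$. By Corollary~\ref{cor:psi_bound} the family $\{\psi_\varepsilon\}_{\modu{\varepsilon}\le 1}$ is uniformly bounded in $L^\infty(0,T;\Sigma^m)$, while Proposition~\ref{prop:lipschitz_in_alpha} yields $\|\eta_\varepsilon\|_{L^\infty(0,T;\Sigma^m)}\le C\|\delta_\alpha\|_{H^1_t}$ uniformly in $\varepsilon$. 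Subtracting the Duhamel formulas for $\psi_\varepsilon$ and $\psi$ and dividing by $\varepsilon$, the function $\eta_\varepsilon$ solves the mild formulation of
\[
i\partial_t\eta_\varepsilon - H\eta_\varepsilon - \alpha V\eta_\varepsilon - \lambda\,\frac{\modu{\psi_\varepsilon}^{2\sigma}\psi_\varepsilon - \modu{\psi}^{2\sigma}\psi}{\varepsilon} = \delta_\alpha V\psi_\varepsilon, \qquad \eta_\varepsilon(0)=0.
\]
The principal technical step, and what I expect to be the main obstacle, is passing to the limit $\varepsilon\to 0$. Here the hypothesis $\sigma\in\N$ is essential: $\modu{\psi}^{2\sigma}\psi$ is a polynomial in $\psi,\conj{\psi}$, so a telescoping mean-value argument combined with the uniform $L^\infty$ bound and the local Lipschitz estimate of Lemma~\ref{lem:tech_nonlin} shows that the difference quotient of the nonlinearity converges to $(\sigma+1)\modu{\psi}^{2\sigma}\eta + \sigma\modu{\psi}^{2\sigma-2}\psi^2\conj{\eta}$. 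A Gronwall estimate on $\eta_\varepsilon-\eta$ then gives $\eta_\varepsilon\to\eta$, where $\eta=\psi'(\alpha)\delta_\alpha$ is the unique mild solution of the linearized equation $\partial_\psi P(\psi,\alpha)\eta = V\psi\delta_\alpha$ with $\eta(0)=0$.

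With $\eta$ identified, the chain rule gives $\langle\mathcal J'(\alpha),\delta_\alpha\rangle = \langle\partial_\psi J,\eta\rangle + \langle\partial_\alpha J,\delta_\alpha\rangle$, and I would evaluate the first summand by the adjoint trick. Testing the linearized equation for $\eta$ against $\varphi$ and the adjoint equation \eqref{eq:adjoint} against $\eta$, subtracting, and integrating by parts in time, all spatial contributions cancel because the spatial part of the linearized operator is symmetric with respect to the real inner product \eqref{eq:inner_prod} --- precisely the symmetry of $H$ and $B(t)$ used in Proposition~\ref{prop:soladjoint}. Only the $i\partial_t$ term survives and, since $\eta(0)=0$, produces the boundary term $\re\int_{\R^d} i\,\eta(T)\conj{\varphi(T)}\,dx$. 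Inserting $\varphi(T)=4i\langle\psi(T),A\psi(T)\rangle_{L^2_x}A\psi(T)$ and using the self-adjointness of $A$ shows this equals the variation of $\langle\psi(T),A\psi(T)\rangle_{L^2_x}^2$, whereas the interior source $\tfrac{\delta J}{\delta\psi(t)}=4\gamma_1(\dot{\alpha})^2\omega V\psi$ reproduces the variation of the running cost \eqref{eq:J_psi1}. The net identity reads
\[
\langle\partial_\psi J,\eta\rangle = \int_0^T \delta_\alpha(t)\,\re\int_{\R^d}\conj{\varphi(t,x)}V(x)\psi(t,x)\,dx\,dt,
\]
which is the first term of \eqref{eq:J_prime}.

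Finally, the explicit control dependence contributes $\langle\partial_\alpha J,\delta_\alpha\rangle = 2\int_0^T\dot{\alpha}(t)\dot{\delta_\alpha}(t)\pare{\gamma_2+\gamma_1\omega^2(t)}\,dt$. Integrating by parts in $t$ and using $\delta_\alpha(0)=0$ to discard the boundary contribution, this equals $-2\int_0^T\frac{d}{dt}\pare{\dot{\alpha}(\gamma_2+\gamma_1\omega^2)}\delta_\alpha\,dt$ in the sense of distributions, i.e.\ the second term of \eqref{eq:J_prime}. Summing the two contributions over arbitrary $\delta_\alpha$ with $\delta_\alpha(0)=0$ identifies $\mathcal J'(\alpha)$. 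Gâteaux-differentiability itself then follows once the difference quotient of $J$ is shown to converge to this limit, which reduces to continuity of $\psi_\varepsilon\mapsto\langle\psi_\varepsilon(T),A\psi_\varepsilon(T)\rangle_{L^2_x}$ and of $\omega_\varepsilon\to\omega$ --- both immediate from the Lipschitz estimate \eqref{eq:lipschitz_in_alpha} together with the embedding of $\Upsilon(0,T)$ into $C([0,T];L^2(\R^d))$.
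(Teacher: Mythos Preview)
Your argument is correct, but it is organized differently from the paper's. You first establish the existence of $\psi'(\alpha)\delta_\alpha$ as the limit $\eta$ of the difference quotients $\eta_\varepsilon$, then invoke the chain rule and eliminate $\eta$ via the adjoint pairing. The paper instead never constructs $\psi'(\alpha)$: it expands the finite difference $\mathcal J(\tilde\alpha)-\mathcal J(\alpha)$ term by term (splitting it into the pieces $\mathrm{I}$, $\mathrm{II}$, $\mathrm{III}$ corresponding to the three summands of $J$), and uses only the Lipschitz bound \eqref{eq:lipschitz_in_alpha} to show that every nonlinear remainder is $\mathcal O(\|\tilde\alpha-\alpha\|_{H^1_t}^2)$. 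The state variation $\tilde\psi-\psi$ is then converted into a control variation by testing the adjoint equation against $\tilde\psi-\psi$ directly and using that $\partial_\psi P(\psi,\alpha)(\tilde\psi-\psi)=(\tilde\alpha-\alpha)V\tilde\psi+\varrho(\tilde\psi,\psi)$, where the Taylor remainder $\varrho$ is pointwise quadratic in $\tilde\psi-\psi$ thanks to $\sigma\in\N$. Your route buys a clean modular statement (differentiability of $\alpha\mapsto\psi(\alpha)$), at the price of an extra Gronwall argument to pass to the limit in the linearized equation; the paper's route is leaner, since the Lipschitz estimate alone suffices and no convergence $\eta_\varepsilon\to\eta$ needs to be proved. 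One small point: when you integrate by parts in the $\partial_\alpha J$ contribution, only the boundary term at $t=0$ is discarded by $\delta_\alpha(0)=0$; the term at $t=T$ survives and is absorbed into the phrase ``in the sense of distributions'' (it is precisely what later yields the natural boundary condition $\dot\alpha_*(T)=0$ in Corollary~\ref{cor:ODE}).
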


\begin{remark} When compared to the assumptions of Theorem \ref{thm:ex_min}, the result of Theorem \ref{thm:J_prime} 
requires additional regularity (and stronger decay) 
of the initial data $\psi_0$ and the potential $V$ (plus, we need to restrict ourselves to $\sigma \in \N$). 
Note that the requirement $m\in \N$ and $m \ge 2$ implies $m>d/2$ for $d=1,2,3$ spatial dimensions. 
\end{remark}

\begin{proof} We need to prove that $ \mathcal J'(\alpha)$ is of the form \eqref{eq:dJweak}. For this purpose, let $\psi=\psi(\alpha)$, $\tilde{\psi}={\psi} (\tilde{\alpha})$ with $\tilde \alpha = \alpha + \varepsilon \delta_\alpha$, satisfy the assumptions of 
Lemma~\ref{prop:lipschitz_in_alpha} and consider the difference of the corresponding objective functionals $\mathcal J(\alpha), \mathcal J(\tilde{\alpha})$. 
This difference can be written as the sum of three terms
\begin{align*}
 \mathcal J(\tilde{\alpha}) - \mathcal J(\alpha) =  \text{I} + \text{II} + \text{III} ,
 \end{align*}
where we define
\begin{align*}
 \text{I} := \langle \tilde{\psi}(T), A\tilde{\psi}(T) \rangle_{L^2_x}^2 - \langle \psi(T), A\psi(T) \rangle_{L^2_x}^2,\quad \text{II}  := \gamma_2 \int_0^T (\dot{\tilde{\alpha}}(t))^2 - (\dot{\alpha}(t))^2 \;dt, 
\end{align*}
and
\begin{align*} 
\text{III} := & \ \gamma_1 \int_0^T (\dot{\tilde{\alpha}}(t))^2 \left(\int_{\R^d} V(x) |\tilde{\psi}(t,x)|^2 \;dx \right)^2 dt \\
& \ - \gamma_1 \int_0^T (\dot{\alpha}(t))^2 \left(\int_{\R^d} V(x) |\psi(t,x)|^2 \;dx \right)^2 dt.
\end{align*}
The general strategy will be to use the Lipschitz property established in Lemma~\ref{prop:lipschitz_in_alpha} and rewrite the terms I, II, and III in such a way that
\begin{align*}
 \mathcal J(\tilde{\alpha}) - \mathcal J(\alpha) =  \mbox{linear terms in $(\tilde \alpha - \alpha)$} + \mathcal O(\|  \tilde \alpha - \alpha\|_{H^1_t}^2).
 \end{align*}
Since $\tilde \alpha = \alpha + \varepsilon \delta_\alpha$ and thus $ \mathcal O(\|  \tilde \alpha - \alpha\|_{H^1_t}^2) =\mathcal O(\varepsilon^2)$, 
the limit $\varepsilon \to 0$ then yields the desired functional derivative.
\smallskip

We start by considering the term I. It can be rewritten in the form
\begin{align*}
 \text{I} &= \langle \tilde{\psi}(T), A\tilde{\psi}(T) \rangle_{L^2_x}^2 - \langle \psi(T), A\psi(T) \rangle_{L^2_x}^2\\
 &= 2\langle \psi(T), A\psi(T) \rangle_{L^2_x}\left(\langle \tilde{\psi}(T), A\tilde{\psi}(T) \rangle_{L^2_x} - \langle \psi(T), A\psi(T) \rangle_{L^2_x} \right)\\&\qquad + \left(\langle \tilde{\psi}(T), A\tilde{\psi}(T) \rangle_{L^2_x} - \langle \psi(T), A\psi(T) \rangle_{L^2_x} \right)^2.
\end{align*}
Using the essential self-adjointness of $A$, the terms within the parentheses yield
 \begin{align*}
 &\langle \tilde{\psi}(T), A\tilde{\psi}(T) \rangle_{L^2_x} - \langle \psi(T), A\psi(T) \rangle_{L^2_x}\\ & \quad= 2 \langle \tilde{\psi}(T) - \psi(T), A\psi(T) \rangle_{L^2_x} + \langle \tilde{\psi}(T) - \psi(T), A(\tilde{\psi}(T) - \psi(T)) \rangle_{L^2_x}.
\end{align*}
Using the Lipschitz-estimate~\eqref{eq:lipschitz_in_alpha}, we obtain 
\[
 \left|\langle \tilde{\psi}(T) - \psi(T), A(\tilde{\psi}(T) - \psi(T)) \rangle_{L^2_x}\right| \le \|A\|_{\mathcal{L}(\Sigma,L^2_x)} \|\tilde{\psi}(T) - \psi(T)\|_{\Sigma}^2 \le C \varepsilon^2 \|\delta_\alpha\|_{H^1_t}^2,
\]
and hence
\begin{equation*}
 \langle \tilde{\psi}(T), A\tilde{\psi}(T) \rangle_{L^2_x} - \langle \psi(T), A\psi(T) \rangle_{L^2_x} = 2 \langle \tilde{\psi}(T) - \psi(T), A\psi(T) \rangle_{L^2_x} \\+ \mathcal O(\|\tilde{\alpha} - \alpha\|_{H^1_t}^2).
\end{equation*}
Squaring the above result and plugging it into our expression for $I$ consequently yields
\begin{equation}\label{eq:I1}
\text{I} = 4 \langle \psi(T), A\psi(T) \rangle_{L^2_x}\ \langle \tilde{\psi}(T) - \psi(T), A\psi(T) \rangle_{L^2_x} + \mathcal O(\|\tilde{\alpha} - \alpha\|_{H^1_t}^2).
\end{equation}
Next we consider II, which can be written as
\begin{align*}
\text{II} = & \ 2\gamma_2 \int_0^T \dot{\alpha}(t) \left(\dot{\tilde{\alpha}}(t)-\dot{\alpha}(t)\right) dt + \gamma_2 \int_0^T \left(\dot{\tilde{\alpha}} (t)- \dot{\alpha}(t)\right)^2 dt\\
= & \ 2\gamma_2 \int_0^T \dot{\alpha}(t) \left(\dot{\tilde{\alpha}}(t)-\dot{\alpha}(t)\right) dt + \mathcal O(\|\tilde{\alpha} - \alpha\|_{H^1_t}^2).
\end{align*}
The first term in the second line is thereby seen to be of the form given in \eqref{eq:dJweak}. Finally we consider III, which in view of definition \eqref{eq:weight} can be written as
\begin{align*}
\text{III} = &\  \gamma_1\int_0^T \left( (\dot{\tilde{\alpha}}(t))^2 - (\dot{\alpha}(t))^2 \right) \omega^2(t)\, dt  \\
&\ + \gamma_1\int_0^T (\dot{\tilde{\alpha}}(t))^2 \left( \left( \int_{\R^d} V(x) |\tilde{\psi}(t,x)|^2 dx \right)^2 - \omega^2(t) \right) dt .
\end{align*}
As before, we can expand these terms using quadratic expansions in both $\tilde{\psi}$ and $\tilde{\alpha}$. 
In view of the Lipschitz estimate \eqref{eq:lipschitz_in_alpha}, any quadratic error $\|\tilde{\psi} - \psi\|_{L^\infty_t L^2_x}^2$ is bounded by $\mathcal{O}(\|\tilde{\alpha} - \alpha\|_{H^1_t}^2)$ and hence we obtain
\begin{equation}
\label{eq:III}
\begin{split}
\text{III} =  & \ 4\gamma_1 \int_0^T (\dot{\alpha}(t))^2 \, \omega(t) \, \Big(\re \int_{\R^d} \left((\conj{\tilde{\psi}}-\conj{\psi })V\psi\right) (t,x)\;dx\Big)  dt \\
& + \  2\gamma_1 \int_0^T \left(\dot{\tilde{\alpha}}(t) - \dot{\alpha}(t)\right) \dot{\alpha}(t)\, \omega^2(t) \;dt +\mathcal O(\|\tilde{\alpha} - \alpha\|_{H^1_t}^2).
\end{split}
\end{equation}
Here the second term on the right hand side is linear in $(\tilde{\alpha} - \alpha)$ and hence of the desired form. In order to treat the first term, we note that the expression
\[
 4\gamma_1 \left( (\dot{\alpha}(t))^2 \int_{\R^d} V(x) |\psi(t,x)|^2 \;dx \right) V(x) \psi (t,x)
\]
appears as a source term in the adjoint equation~\eqref{eq:adjoint}. Thus we obtain 
\begin{equation}
 \label{eq:help_integral}
 \begin{split}
&\ 4\gamma_1 \int_0^T (\dot{\alpha}(t))^2 \, \omega(t) \, \Big(\re \int_{\R^d} \left((\conj{\tilde{\psi}}-\conj{\psi })V\psi\right) (t,x)\;dx\Big)  dt \\
 & \ = \re \int_0^T \int_{\R^d} \conj{\varphi}(t,x) \left( \partial_\psi P(\psi,\alpha)(\tilde{\psi}-\psi) \right)(t,x) \,  dx\\
 &\quad \ - \re  \int_{\R^d} i\, \conj{\varphi}(T,x) \big(\tilde{\psi}(T,x)-\psi(T,x)\big)dx,
\end{split}
\end{equation}
where we recall that $\partial_\psi P(\psi,\alpha)$ denotes the linearized Schr\"odinger operator obtained in \eqref{eq:linNLS}.
The last term on the right hand side of \eqref{eq:help_integral} stems from the boundary condition at $t=T$. Note that the boundary term at $t=0$ vanishes since $\tilde{\psi}(0)=\psi_0=\psi(0)$ by assumption. 
We recall that $\varphi\in C([0,T];L^2(\R^d))$ and 
\[
 \tilde{\psi}, \psi \in L^\infty(0,T;\Sigma^m) \cap W^{1,\infty}(0,T;\Sigma^{m-2}), \quad \text{with $m\ge 2$},
\]
and hence the right hand side of \eqref{eq:help_integral} is well-defined. 
In addition, since both $\tilde{\psi}$ and $ \psi$ solve the nonlinear Schr\"odinger equation \eqref{eq:schroed}, we can write
\begin{equation}
\label{eq:linearized_III}
 \begin{split}
  \partial_\psi P(\psi,\alpha) (\tilde{\psi}-\psi) = & \ i \partial_t (\tilde{\psi}-\psi) - H (\tilde{\psi}-\psi) - V(x)( \tilde{\alpha}(t) \tilde{\psi}-\alpha(t) \psi ) \\
  & \ + \lambda |\psi|^{2\sigma} \psi - \lambda |\tilde{\psi}|^{2\sigma} \tilde{\psi}  + (\tilde{\alpha}(t)-\alpha(t)) V(x) \tilde{\psi} + \varrho(\tilde{\psi},\psi) \\
  = & \ (\tilde{\alpha}(t)-\alpha(t)) V (x)\tilde{\psi} + \varrho(\tilde{\psi},\psi),
 \end{split}
\end{equation}
where the remainder $ \varrho(\tilde{\psi},\psi)$ is given by
\[
 \frac{1}{\lambda} \varrho(\tilde{\psi},\psi) = |\tilde{\psi}|^{2\sigma}\tilde{\psi} - |\psi|^{2\sigma}\psi - (\sigma+1) |\psi|^{2\sigma} (\tilde{\psi}-\psi) - \sigma |\psi|^{2\sigma-2}\psi^2\left(\conj{\tilde{\psi}}-\conj{\psi}\right).
\]
Since $2\sigma\ge 2$ by assumption, $\|\tilde{\psi}\|_{L^\infty_tL^\infty_x}, \|\psi\|_{L^\infty_t L^\infty_x} \le C$ in view of Corollary~\ref{cor:psi_bound}, and $\Sigma^m \hookrightarrow L^\infty(\R^d)$, the remainder can be bounded by
\[
| \varrho(\tilde{\psi},\psi)|\le C\left( |\tilde{\psi}|^{2\sigma-1}+|\psi|^{2\sigma-1} \right) |\tilde{\psi}-\psi|^2\le C |\tilde{\psi}-\psi|^2.
\]
In addition, since $\varphi\in C([0,T];L^2(\R^d))$ and $\Sigma^m\subset H^m(\R^d)\hookrightarrow L^4(\R^d)$, we find that
\[
\int_{\R^d} |\varphi(t,x)| |\tilde{\psi}(t,x)-\psi(t,x)|^2 \;dx \le \|\varphi\|_{L^\infty_tL^2_x} \| \tilde{\psi} - \psi \|_{L^\infty_tL^4_x}^2  = O(\|\tilde{\alpha} - \alpha\|_{H^1_t}^2).
\]
Furthermore, the contribution of $(\tilde{\alpha}(t)-\alpha(t)) V(x) \tilde{\psi}$ in~\eqref{eq:linearized_III} equals
\[
 (\tilde{\alpha}(t)-\alpha(t)) V (x)\psi + (\tilde{\alpha}(t)-\alpha(t)) V(x) (\tilde{\psi} - \psi),
\]
where the latter term can be estimated by $O(\|\tilde{\alpha} - \alpha\|_{H^1_t}^2)$ as before.
In summary, this shows that  
\begin{equation}
\label{eq:help_integral2}
\begin{split}
\eqref{eq:help_integral}= & \int_0^T (\tilde{\alpha}(t) - \alpha(t)) \re \int_{\R^d} \conj{\varphi}(t,x) V(x) \psi(t,x) dx dt  + O(\|\tilde{\alpha}-\alpha\|_{H^1_t}^2)\\
 & - 4 \langle \psi(T), A\psi(T) \rangle_{L^2_x}\ \langle \tilde{\psi}(T) - \psi(T), A\psi(T) \rangle_{L^2_x},
\end{split}
\end{equation}
where we have used the fact that the data of the adjoint problem at $t=T$ is given by 
\[
 \varphi(T,x) =   4i \langle \psi(T, \cdot), A\psi(T, \cdot) \rangle_{L^2_x}\ A\psi(T,x).
\]
Thus, we infer that, up to quadratic errors, the second line in~\eqref{eq:help_integral2} cancels with the terms obtained in \eqref{eq:I1}. 
Collecting all the expressions obtained for $\text{I, II, III}$ and taking the limit $\varepsilon \to 0$, we have shown that $\mathcal J(\alpha)$ is G\^{a}teaux-differentiable with derivative $\mathcal J'(\alpha)$ 
given by \eqref{eq:dJweak}. 
This concludes proof of Theorem \ref{thm:J_prime}. \end{proof}

Equation~\eqref{eq:J_prime} yields the following characterization of the critical points $\alpha_* \in H^1(0,T)$, i.e.\ points where $\mathcal J'(\alpha_*) = 0$. 

\begin{corollary} \label{cor:ODE}
Let $\psi_*$ be the solution of \eqref{eq:schroed} with control $\alpha_*$. Also, let $\varphi_*$ be the corresponding solution of the adjoint equation \eqref{eq:adjoint}, and denote by 
$\omega_*$ the function defined in \eqref{eq:weight} with $\psi$ replaced by $\psi_*$. Then $\alpha_* \in C^2(0,T)$ 
is a classical solution of the following ordinary differential equation
\begin{equation}\label{eq:ODEalpha}
 \frac{d}{dt} \left( \dot{\alpha}_*(t) \pare{\gamma_2 + \gamma_1 \omega_*^2(t)} \right) =  \frac{1}{2} \, \re \int_{\R^d} \conj{\varphi_*}(t,x) V(x) \psi_*(t,x) \;dx,
\end{equation}
subject to $\alpha_*(0) = \alpha_0$, $\dot{\alpha}_*(T) = 0$. 
\end{corollary}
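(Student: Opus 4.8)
The plan is to read the Euler--Lagrange system off directly from the first-order derivative computed in Theorem~\ref{thm:J_prime}. A critical point $\alpha_*$ is by definition characterized by $\langle \mathcal J'(\alpha_*),\delta_\alpha\rangle_{H^{-1}_t,H^1_t}=0$ for every admissible perturbation $\delta_\alpha\in H^1(0,T)$ with $\delta_\alpha(0)=0$. By Theorem~\ref{thm:J_prime}, before integrating by parts, this pairing is given by the weak form \eqref{eq:dJweak}, i.e.
$$
\langle \mathcal J'(\alpha_*),\delta_\alpha\rangle = \int_0^T \delta_\alpha(t)\,\Big(\re\int_{\R^d}\conj{\varphi_*}(t,x)V(x)\psi_*(t,x)\,dx\Big)\,dt + 2\int_0^T \dot\alpha_*(t)\pare{\gamma_2+\gamma_1\omega_*^2(t)}\,\dot\delta_\alpha(t)\,dt,
$$
whose distributional form on the open interval $(0,T)$ is exactly \eqref{eq:J_prime}.

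First I would isolate the differential equation. Restricting to test functions $\delta_\alpha\in C_0^\infty(0,T)$, which vanish at both endpoints so that no boundary contribution survives, the vanishing of the pairing for all such $\delta_\alpha$ is precisely the statement that \eqref{eq:J_prime} equals zero in $\mathcal D'(0,T)$; by the fundamental lemma of the calculus of variations this yields \eqref{eq:ODEalpha} in the distributional sense. Next I would extract the natural boundary condition at $t=T$ by admitting the full class $\delta_\alpha\in H^1(0,T)$ with $\delta_\alpha(0)=0$ but $\delta_\alpha(T)$ arbitrary. Since the equation just obtained makes the bulk integrand vanish, integrating the second integral by parts leaves only the endpoint term $2\,\delta_\alpha(T)\,\dot\alpha_*(T)\pare{\gamma_2+\gamma_1\omega_*^2(T)}$; as $\delta_\alpha(T)$ is free and $\gamma_2+\gamma_1\omega_*^2(T)\ge\gamma_2>0$, this forces $\dot\alpha_*(T)=0$. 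The remaining condition $\alpha_*(0)=\alpha_0$ is not derived but is the a priori constraint fixing $\alpha(0)=\alpha_0$ throughout $\Lambda(0,T)$, which is exactly why only perturbations with $\delta_\alpha(0)=0$ were admitted.

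Finally I would promote the distributional solution to a classical one and establish $\alpha_*\in C^2(0,T)$ by a bootstrap. Write $p(t):=\gamma_2+\gamma_1\omega_*^2(t)$ and $g(t):=\tfrac12\re\int_{\R^d}\conj{\varphi_*}V\psi_*\,dx$. Since $\varphi_*\in C([0,T];L^2(\R^d))$ by Proposition~\ref{prop:soladjoint}, $\psi_*\in C([0,T];\Sigma)$ by Proposition~\ref{prop:NLS_sol}, and $V\in L^\infty$, the source $g$ is continuous, so $\int_0^t g\,ds\in C^1$. For $p$ I would differentiate $\omega_*(t)=\langle\psi_*(t),V\psi_*(t)\rangle$ using $\psi_*\in C([0,T];\Sigma)\cap C^1([0,T];\Sigma^*)$ together with $V\psi_*\in C([0,T];\Sigma)$ (here $V\in W^{1,\infty}$ guarantees $V\psi_*$ and $x\,V\psi_*$ remain in $L^2$), so that $\omega_*\in C^1$ and hence $p\in C^1$. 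Integrating \eqref{eq:ODEalpha} once gives $\dot\alpha_*(t)\,p(t)=c+\int_0^t g(s)\,ds$, whence $\dot\alpha_*=\pare{c+\int_0^t g}/p\in C^1$, using that $p\ge\gamma_2>0$ so that $1/p\in C^1$. Therefore $\alpha_*\in C^2(0,T)$ and \eqref{eq:ODEalpha} holds pointwise.

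The main obstacle is this last step: upgrading the weak Euler--Lagrange identity to a genuine $C^2$ solution. The delicate points are that the coefficient $p$ must simultaneously be $C^1$ \emph{and} uniformly bounded away from zero for division by $p$ to preserve regularity; both rely on $\gamma_2>0$ (or, alternatively, on the positivity $V\ge\delta>0$ discussed earlier in the paper), and the $C^1$ regularity of $\omega_*$ invokes the extra smoothness $V\in W^{1,\infty}$ together with the time-regularity of $\psi_*$ in $\Sigma$ and $\Sigma^*$. By contrast, the derivation of the interior ODE and of the natural boundary condition is routine once the representation \eqref{eq:dJweak} of $\mathcal J'(\alpha_*)$ is in hand.
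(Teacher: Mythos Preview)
Your proof is correct and follows the same overall scheme as the paper: pass from the weak identity for $\mathcal J'(\alpha_*)$ to the distributional ODE, and then upgrade to $C^2$ by checking that the coefficient $p=\gamma_2+\gamma_1\omega_*^2$ lies in $C^1$ (via $V\in W^{1,\infty}$ and $\psi_*\in C([0,T];\Sigma)\cap C^1([0,T];\Sigma^*)$) and that the source $g$ is continuous.

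Two points are worth flagging. First, you explicitly derive the natural boundary condition $\dot\alpha_*(T)=0$ by testing against $\delta_\alpha$ with $\delta_\alpha(T)$ free; the paper simply asserts both boundary conditions at the end without isolating this step, so your treatment here is more complete. Second, for the regularity upgrade you give a direct bootstrap (integrate once, divide by $p\ge\gamma_2>0$, conclude $\dot\alpha_*\in C^1$), whereas the paper argues indirectly: it first proves uniqueness of weak solutions with datum $\alpha_*(0)=\alpha_0$, then invokes the existence of a classical $C^2$ solution to the two-point boundary value problem, and identifies the two by uniqueness. Your route is more elementary and avoids the slightly awkward comparison between a problem with one boundary condition (weak) and one with two (classical); the paper's route, on the other hand, packages the ODE theory as a black box. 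Both lead to the same conclusion.
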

\begin{remark} In the case $\gamma_1=0$ this simplifies to the expression used in the physics literature; cf.\ \cite{schm}.\end{remark}

\begin{proof}

Let $\mu \in C_0^\infty(0,T)$ be a test function with compact support in $(0,T)$. Then, Theorem \ref{thm:ex_min} and Theorem \ref{thm:J_prime} imply that there 
exists $\alpha_*\in H^1(0,T)$ such that $\mathcal J'(\alpha_*) = 0$, satisfying \eqref{eq:J_prime} in the sense of distributions, i.e. 
\begin{equation*}\label{eq:dJweak}
 \int_0^T \dot{\alpha}_*(t) \dot{\mu}(t) \pare{\gamma_2 + \gamma_1 \omega_*^2(t)}dt =  \frac{1}{2} \re \int_0^T  \int_{\R^d} \mu(t) \conj{\varphi_*}(t,x) V(x) \psi_*(t,x) dx dt,
\end{equation*}
where we have used the fact that the boundary terms at $t=0$ and $t=T$ vanish due to the compact support of $\mu(t)$.
We shall show that the weak solution $\alpha_*$ is in fact unique. This can be seen by considering two different $\alpha^1_*(t), \alpha_*^2(t)$, satisfying $\alpha^1_*(0)= \alpha_*^2(0)=\alpha_0$. 
Denoting their difference by 
$\beta_*= \alpha^1_* - \alpha_*^2$, we have that $\beta_*(t)$ solves
\[
 \int_0^T \dot{\beta}_*(t) \dot{\mu}(t) \pare{\gamma_2 + \gamma_1 \omega_*^2(t)}dt = 0, \quad \text{for all $\mu \in C_0^\infty(0,T)$.}
\]
Since $\gamma_2 > 0$ and $\gamma_1\ge 0$, this implies that $\dot \beta_*(t) = 0$ in the sense of distributions. However, since 
$\alpha^1_*, \alpha_*^2\in H^1(0,T) \hookrightarrow C(0,T)$, we conclude that $\beta_* \in C(0,T)$ and thus $\beta_* (t) = \text{const}$ for all $t\in [0,T]$. 
Since $\beta_*(0) = 0$ by assumption, we infer uniqueness of the weak solution $\alpha_*(t)$. 
On the other hand, standard arguments imply that \eqref{eq:ODEalpha} admits a unique classical solution $\alpha_* \in C^2(0,T)$, provided 
$\omega_* \in C^1(0,T)$ and the (source term on the) right hand side is continuous in time. The latter is obviously true in view of Proposition \ref{prop:NLS_sol} and Proposition \ref{prop:soladjoint}. 
In addition, since $V\in W^{1, \infty}(\R^d)$, we infer that for all $\psi(t) \in \Sigma$ it holds that $\chi(t):= (V(x)\psi(t)) \in \Sigma$. From Proposition \ref{prop:NLS_sol} it follows that
\[
\dot \omega_*(t) = 2 \re \int_{\R^d} V(x) \partial_t \psi_*(t,x) \conj{ \psi_*}(t,x) \, dx = 2 \langle \chi (t), \dot \psi_*(t) \rangle_{\Sigma, \Sigma^*}< +\infty.
\]
Thus, $\omega(t) \in C^1(0,T)$, yielding the existence of a unique classical solution $\alpha_*\in C^2(0,T)$. 
We therefore conclude that the unique weak solution $\alpha_*$ obtained above is in 
fact a classical solution, satisfying \eqref{eq:ODEalpha} subject to $\alpha_*(0) = \alpha_0$, $\dot{\alpha}_*(T) = 0$.
\end{proof}
We call $\alpha_*\in H^1(0,T)$ a {\it critical} or {\it stationary} point of the problem
\begin{equation}\label{Pcontrol}
\text{min}\quad\mathcal{J}(\alpha)\quad\text{over}\quad\alpha\in H^1(0,T)
\end{equation}
if $\mathcal{J}'(\alpha_*)=0$, where $\mathcal{J}'$ is given in Theorem~\ref{thm:J_prime}. In order the check computationally whether $\alpha_*$ is critical, one needs to solve \eqref{eq:schroed} for $\alpha=\alpha_*$ to obtain $\psi_*$ and then the adjoint equation \eqref{eq:adjoint}  with $\psi=\psi_*$ and $\alpha=\alpha_*$ to compute $\varphi_*$. Inserting $(\alpha,\psi,\varphi)=(\alpha_*,\psi_*,\varphi_*)$ in \eqref{eq:J_prime} yields $\mathcal{J}'(\alpha_*)$ which has to vanish for $\alpha_*$ to be critical, i.e., \eqref{eq:ODEalpha} is satisfied. We therefore call
\eqref{eq:schroed}, \eqref{eq:adjoint} and \eqref{eq:ODEalpha} the first order optimality conditions associated with \eqref{Pcontrol}.

\section{Numerical simulation of the optimal control problem} \label{sec:num}

For our numerical treatment we simplify to the case $d=\sigma = 1$. In this case, the first order optimality conditions for our optimal control problem are given by:
\begin{equation*}
\left \{
\begin{aligned}
& \frac{d}{dt} \left( \dot{\alpha}(t) \pare{\gamma_2 + \gamma_1 \omega^2(t)} \right) = \frac{1}{2} \, \re \int_{\R^d} \conj{\varphi(t,x)} V(x) \psi(t,x) \;dx,\\
& i \partial_t \psi +\frac{1}{2}\partial_x^2 \psi = (U(x) +  \alpha(t) V(x) )\psi +\lambda |\psi|^{2}\psi ,\\
 &i \partial_t \varphi  +\frac{1}{2} \partial_x^2 \varphi = (U(x) + \alpha(t) V(x) )\varphi  + 2 \lambda |\psi|^{2} \varphi +  \lambda\psi^2  \conj{\varphi}+
         4 \gamma_1 (\dot{\alpha})^2 \, \omega(t) V(x) \psi, \end{aligned}
\right. 
\end{equation*}
subject to the following conditions: $\alpha(0) = \alpha_0$, $\dot{\alpha}(T) = 0$, and 
\begin{equation*}
\psi(0,x) = \psi_0(x), \quad  \varphi(T,x) = 4i \langle \psi(T, \cdot), A\psi(T, \cdot) \rangle_{L^2_x} A\psi(T, x).
\end{equation*}

In our numerical simulations, the resulting Cauchy problems for Schr\"odinger-type equations are solved by a {\it time-splitting spectral method} 
of second order (Strang-splitting), as can be found in \cite{BJM}. 
This computational approach is unconditionally stable and allows for spectral accuracy in the resolution of the wave function $\psi(t,x)$. This is needed due to the 
highly oscillatory nature of solutions to (nonlinear) Schr\"odinger--type equations.
We consequently perform our simulations on a {\it numerical domain} $\Omega\subset \R$, equipped with periodic boundary conditions. The 
trapping potential $U(x)$ is thereby chosen such that the ``effective'' (i.e.\ the numerically relevant) support of the wave function $\psi(t,x)$ stays away from the boundary. 
In doing so, the boundary conditions do not significantly influence our results. 
A good test of the accuracy of our numerical code is given by the fact that the Gross-Pitaevskii equation conserves the physical mass (i.e. the $L^2$-norm of $\psi(t)$). 
Indeed, in all our numerical examples presented in Section \ref{sec:numex} below, we find that the $L^2$-norm is numerically preserved up to relative errors of the order $10^{-13}$.

\subsection{Gradient-related descent method} Once a suitable solver for the state and the adjoint equations is at hand, our gradient-related descent scheme operates as follows. We determine a
sequence of descent directions $(\delta_\alpha^k)\subset H^1(0,T)$, i.e., for every $k\in\mathbb{N}$
\[
 \mathcal{J}(\alpha_k + \delta_\alpha^k) < \mathcal{J}(\alpha_k)\equiv J(\psi(\alpha_k), \alpha_k).
\]
is satisfied. Note that a simple Taylor expansion of $\mathcal{J}$ around $\alpha_k$ shows that $\langle \mathcal{J}'(\alpha_k), \delta_\alpha^k \rangle<0$ is sufficient for $\delta_\alpha^k$ to be a descent direction for $\mathcal{J}$ at $\alpha_k$. We are in particular interested in gradient-related descent directions which satisfy
\[
M\delta_\alpha^k = -\mathcal{J}'(\alpha_k)
\]
with $M$ a suitably chosen positive definite operator.

A rather straightforward choice of $M$ is given by $M = \partial^2_\alpha J(\psi, \alpha_k)$. In this case $\delta_\alpha^k$ is obtained as the solution of the following ordinary differential equation (of second order):
\begin{align*}
\mathcal{J}'(\alpha_k) = 2 \frac{d}{dt} \left( \dot{\delta_\alpha^k}(t) \pare{\gamma_2 + \gamma_1 \left(\int_{\R} V(x) |\psi_k(t,x)|^2 dx\right)^2} \right),
\end{align*}
with $ \delta_\alpha^k(0) = 0$ and $\dot{\delta_\alpha^k}(T) = 0.$ Here $\psi_k(t,x)$ denotes the solution of the Gross--Pitaevskii equation with $\alpha(t) = \alpha_k(t)$.
With this choice of a descent direction, we then perform a \emph{line search} in order to decide on the length of the step taken along $\delta_\alpha^k$. In fact, we seek for $\nu_k > 0$ such that
\begin{equation}
 \label{eq:line_search}
 \mathcal J(\alpha_k + \nu_k \delta_\alpha^k) \leq \mathcal J(\alpha_k) + \mu\nu_k \langle \mathcal{J}'(\alpha_k), \delta_\alpha^k \rangle
\end{equation}
with some fixed $\mu\in (0,1)$.
Within each line search, we determine $\nu_k$ iteratively by a backtracking strategy. Thus, the whole procedure amounts to an Armijo line search method with backtracking. Of course, more elaborate strategies based on interpolation or alternative line search criteria are possible; see, e.g., \cite{NW_book} for more details.

We stop the gradient descent method whenever
\begin{equation}
\label{eq:term_cond}
 \|\mathcal J'(\alpha_k)\|_{H^{-1}_t} \leq \text{TOL} \cdot \|\mathcal J'(\alpha_1)\|_{H^{-1}_t}
\end{equation}
is satisfied for the first time. Here, $\textnormal{TOL} \in (0,1)$ is a given stopping tolerance and $\alpha_1 \in H^1(0,T)$ is the initial guess satisfying the boundary conditions $\alpha_1(0) = \alpha_0$ and $\dot{\alpha}_1(T) = 0$. As a safeguard, also an upper bound on the number of iterations is implemented.

In our tests, we observe the usual behavior of steepest descent type algorithms, i.e., the method exhibits rather fast progress towards a stationary point in early iterations, but then suffers from
scaling effects reducing the convergence speed. Therefore, often the maximum number of iterations is reached. Thus, we connect
the first-order, gradient method to a Newton-type method which relies on second derivatives
or approximations thereof.  

\subsection{Newton method}
The majority of iterations within our simulations are performed via a second order method, \emph{Newton's method}, for which we use the full Hessian
\[
M := D^2_{\alpha} \mathcal{J}(\alpha_k) : H^1(0,T)\times H^1(0,T) \to \R,
\]
or a sufficiently close positive definite approximation thereof. Note that we can also consider the Hessian as a map $D^2_\alpha \mathcal{J} : H^1(0,T) \to H^1(0,T)^*$. Recall that the gradient-related method above simply uses $M= \partial^2_{\alpha} J(\psi, \alpha_k)$. 

We derive $D^2_{\alpha} \mathcal{J}$ formally form the Lagrangian formulation; see Remark \ref{rem:lag}. The Lagrangian is given by
\[
L(\psi,\alpha,\varphi) = J(\psi,\alpha) - \langle \varphi, P(\psi,\alpha) \rangle_{L^2_{t,x}},
\]
where $\varphi$ is the solution to the adjoint equation \eqref{eq:adjoint} and $P(\psi,\alpha)$ is the Gross--Pitaevksii operator written in abstract form. Proceeding formally, we find 
\begin{align*}
 \langle (D^2_{\alpha}\mathcal{J}) \delta_\alpha, \tilde \delta_\alpha \rangle_{L^2_t} &= \langle( \partial^2_\psi L) \delta_\psi, \tilde \delta_\psi \rangle_{L^2_{t,x}} + \langle (\partial_{\psi \alpha} L) \delta_\alpha, \tilde \delta_\psi \rangle_{L^2_{t,x}}\\&\qquad + \langle (\partial_{\alpha \psi} L) \tilde\delta_\alpha, \delta_\psi \rangle_{L^2_t} + \langle (\partial^2_\alpha L) \delta_\alpha, \tilde \delta_\alpha \rangle_{L^2_t},
\end{align*}
where $\delta_\psi$ and $\tilde \delta_\psi$ solve the linearized Gross--Pitaevksii equation with controls $\delta_\alpha, \tilde \delta_\alpha$, respectively. 
In view of the derivation given in Section~\ref{sec:ident} we have 
\[
 \delta_\psi = \psi'(\alpha)\delta_\alpha = -\partial_\psi P(\psi(\alpha),\alpha)^{-1}\partial_\alpha P(\psi(\alpha),\alpha) \delta_\alpha,
\]
and analogously for $\tilde \delta_\psi$. Hence we conclude that
\begin{equation}
 \label{eq:hess}
\begin{aligned}
 & \ \langle (D^2_{\alpha}\mathcal{J}) \delta_\alpha, \tilde \delta_\alpha \rangle_{L^2_t}  = 
 \langle (\partial^2_\psi J) \psi'(\alpha)\delta_\alpha, \psi'(\alpha)\tilde \delta_\alpha\rangle_{L^2_{t,x}} + \langle (\partial_{\alpha\psi}J) \delta_\alpha, \psi'(\alpha) \tilde \delta_\alpha\rangle_{L^2_{t,x}}\\
 & \ + \langle (\partial_{\psi\alpha}J) \psi'(\alpha) \delta_\alpha, \tilde \delta_\alpha\rangle_{L^2_t} + \langle (\partial^2_\alpha J) \delta_\alpha,\tilde \delta_\alpha\rangle_{L^2_t} - 
 \langle \varphi , \big(D_\alpha^2 P (\psi, \alpha) \delta_\alpha\big) \tilde\delta_\alpha \rangle_{L^2_{t,x}},
 \end{aligned}
 \end{equation}
 where 
\begin{align*}
& \ (D_\alpha^2 P (\psi, \alpha) \delta_\alpha) \tilde\delta_\alpha = \big(\partial^2_\psi P(\psi, \alpha) (\psi'(\alpha)\delta_\alpha)\big)  (\psi'(\alpha) \tilde\delta_\alpha)\\
&\  + \big(\partial_{\alpha\psi} P(\psi, \alpha) \delta_\alpha\big) (\psi'(\alpha) \tilde \delta_\alpha) + \big(\partial_{\psi\alpha} P(\psi, \alpha)(\psi'(\alpha) \delta_\alpha)\big)\tilde \delta_\alpha,
\end{align*}
since $ \partial^2_\alpha P(\psi, \alpha) = 0$. All of the terms appearing on the right hand side of \eqref{eq:hess} can be evaluated by replacing $ \psi'(\alpha)\delta_\alpha$ by $-\partial_\psi P(\psi,\alpha)^{-1}\partial_\alpha P(\psi,\alpha)\delta_\alpha$. Consequently for calculating the action of the Hessian this requires to solve several linearized Schr\"odinger-type equations with different source terms and boundary data.
For example, the term involving $(\partial_{\alpha\psi}J)$ can be evaluated by using
\[
 \chi:=\partial_\psi P(\psi, \alpha) ^{-*} ((\partial_{\alpha\psi} J )\delta_\alpha),
\]
which solves the following Cauchy problem
\begin{align*}
  i \partial_t \chi + \frac{1}{2} \partial^2_x \chi = U(x) \chi + \alpha(t) V(x) \chi +  2\lambda |\psi|^{2} \chi + \lambda \psi^2  \conj{\chi} 
  + 8 \gamma_1 h(t,x) \psi,
\end{align*}
where $h(t,x): = \omega(t) \dot{\alpha}(t) \dot{\delta}_\alpha (t) V(x)$ and
$$\chi(T,x) = \frac{\delta^2J(\psi, \alpha)}{\delta\psi(T, x)\, \delta\alpha(T)} = 0.$$

Bearing this in mind, we have to solve the following equation for $\delta_\alpha^k \in H^1(0,T)$:
\begin{equation}
\label{eq:newt}
M \delta_\alpha^k\equiv  D^2_\alpha \mathcal{J} \delta_\alpha^k = -\mathcal{J}'(\alpha^k) \in H^{1}(0,T)^*.
\end{equation}
Hence, we need to invert $D^2_\alpha \mathcal{J}$, which, in view of \eqref{eq:hess} is not directly possible. Rather we resort to an iterative method, the \emph{preconditioned MINRES algorithm}, see~\cite{PS}, with the preconditioner $(\partial_\alpha^2 J(\psi, \alpha))^{-1} : H^1(0,T)^* \to H^1(0,T)$. 

We emphasize that here we aim to study the behavior of solutions of our control problem rather than at optimizing the respective solution algorithm or its implementation.

\subsection{Numerical examples}\label{sec:numex}
In all our examples, we choose the numerical domain $\Omega = [-L, L]$ with $L = 20$ and periodic boundary conditions. The number of spatial grid points is $N=256$. In addition, we set the final control time to be $T = 10$, and we use $M=1024$ equidistant time steps. In order to avoid the influence of the boundary, we choose a trapping potential $U(x) = 30\left(\frac{x}{L}\right)^2$. The initial guess for the control is taken to be just $\alpha_1 \equiv 0$ in the linear case ($\lambda = 0$), whereas each algorithm in the nonlinear case ($\lambda \neq 0$) is started from the control obtained by solving the linear problem.
In our tests of the first-order gradient method, we choose $\textnormal{TOL} = 10^{-8}$ in the terminating condition~\eqref{eq:term_cond} for the whole algorithm, $\mu = 10^{-3}$, and a maximum number of $20000$ iterations. For the Newton method, we likewise set $\textnormal{TOL} = 10^{-8}$ and we stop the algorithm after at most $45$ Newton steps.

\subsubsection{Example: shifting a linear wave packet} 
For validation purposes, we consider the time-evolution of a {\it linear} wave packet, i.e.\ $\lambda = 0$, whose center of mass we aim to shift towards a prescribed point $y_1 \in [-L, L]$.
For this purpose consider a control potential $$V(x) =  \frac{3}{10} + \frac{3x}{200}\ge 0,\quad \forall x \in [-L,L],$$ and the observable
 \[A(x) = 1-e^{-(\kappa(x-y_1))^2/L^2}.\] 
In this case, we find that the algorithm converges well even if we only invoke the first order gradient method. 
Indeed, as we decrease the regularization parameters $\gamma_1, \gamma_2 \ll 1$, we approach an optimal solution which, as it seems, cannot be improved upon. 
This optimal solution, or, more precisely, its spatial density $\rho = |\psi|^2$, is depicted in Figure~\ref{fig:shift_wave_pkt} (right plot), where we denote by ``target'' the function proportional to $1-A(x)$ with $\kappa=0.07$ and $y_1=-2L/8$, such that it has the same $L^2$--norm as $\psi_0$. The left plot shows the associated control.\\
\begin{figure}[ht]
  \centering
  \subfloat{\includegraphics[width=0.5\textwidth]{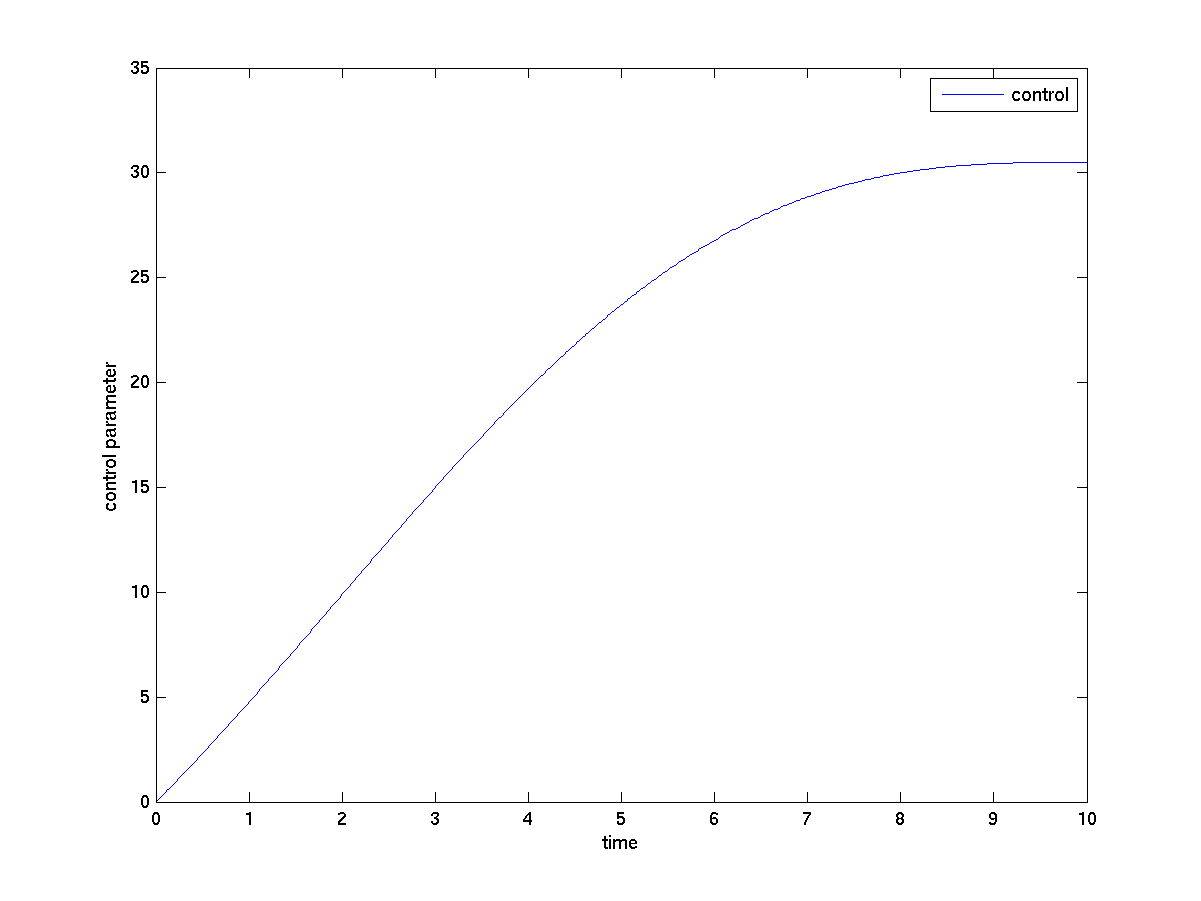}}
  \subfloat{\includegraphics[width=0.5\textwidth]{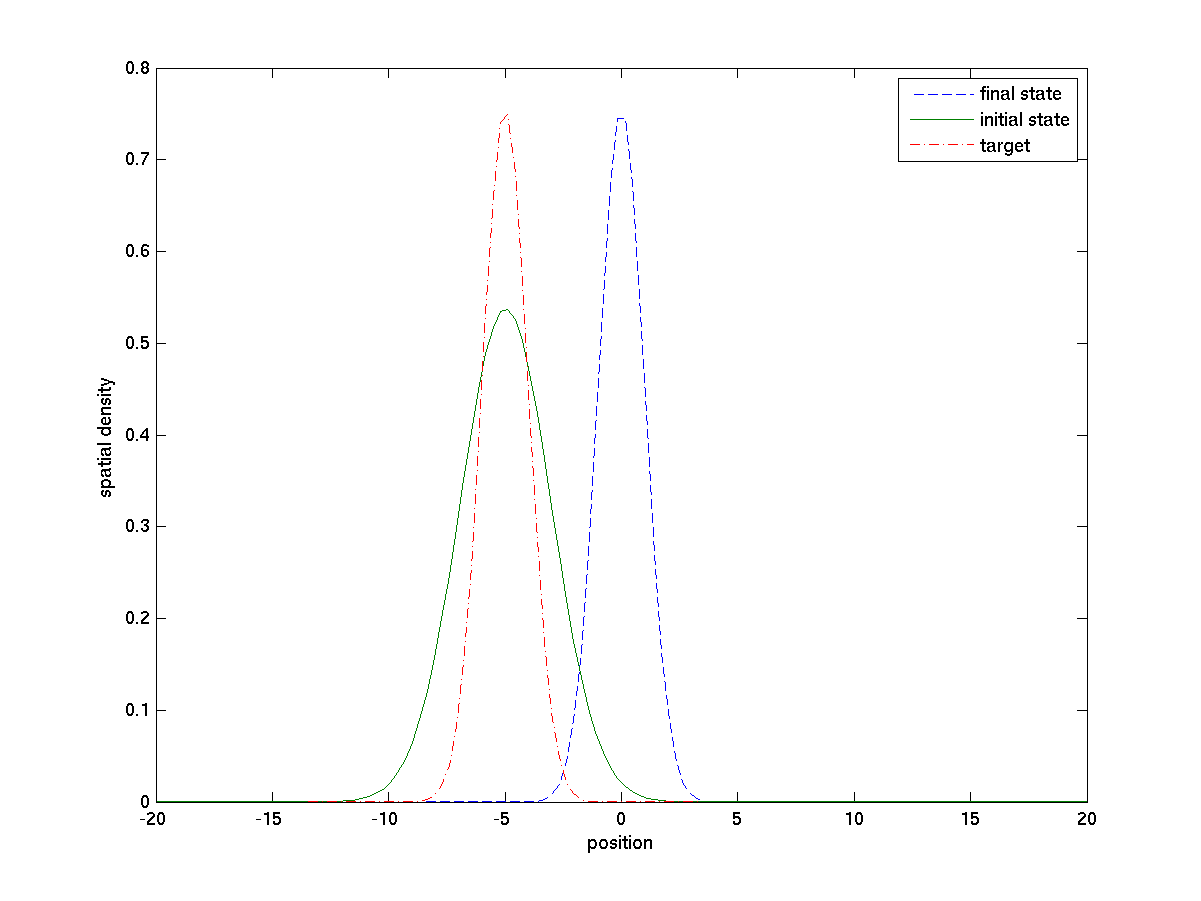}}
  \caption{Shifting a linear wave paket}
  \label{fig:shift_wave_pkt}
\end{figure}

Since this solution seems optimal, the choice of $\gamma_1, \gamma_2$ becomes negligible below a certain threshold. Thus, it suffices to consider $\gamma_1 = 0$ and only include the cost term 
proportional to $\gamma_2$. Similar results hold for any other given point $y_1\in \Omega$, provided $y_1$ stays sufficiently far away from the boundary.

\subsubsection{Example: splitting a linear wave paket}\label{ex:splitlin} 
We still consider the linear case, i.e., $\lambda = 0$, and aim to split a given initial wave packet into two separate packets centered around $y_1$ and $y_2$, respectively. The control potential is chosen as 
$$V(x) = e^{-8x^2/L^2} \ge 0,$$ 
and the observable  
\[A(x) =  1-\left(e^{-(\kappa(x-y_1))^2/L^2}+ e^{-(\kappa(x-y_2))^2/L^2}\right).\]  
In the following we fix $\kappa=0.07$, $y_1=-2L/8$, and $y_2=2L/8$.
In this case we find that the residual of the first order gradient method does not drop below the tolerance given in~\eqref{eq:term_cond} before the maximum number of iterations is reached.
With the Newton method, however, we find a (local) minimum of the objective functional $J(\psi, \alpha)$ in less than 20 Newton iterations. Of course there is no guarantee that this is a global minimum.

In order to illustrate our results, we consider the case where $\gamma_1 = 0$, $\gamma_2 = 1.5\times 10^{-6}$. At the final control time $T=10$ we then obtain: 
\[ \langle A\psi(T),\psi(T)\rangle_{L_x^2}^2 \approx 2.261 \times 10^{-3} .\]
The spatial density $\rho = |\psi|^2$ of the corresponding solution is shown in the right plot of Figure~\ref{fig:split_wave_pkt}. The associated control is depicted in the left plot.
\begin{figure}[ht]
  \centering
  \subfloat{\includegraphics[width=0.5\textwidth]{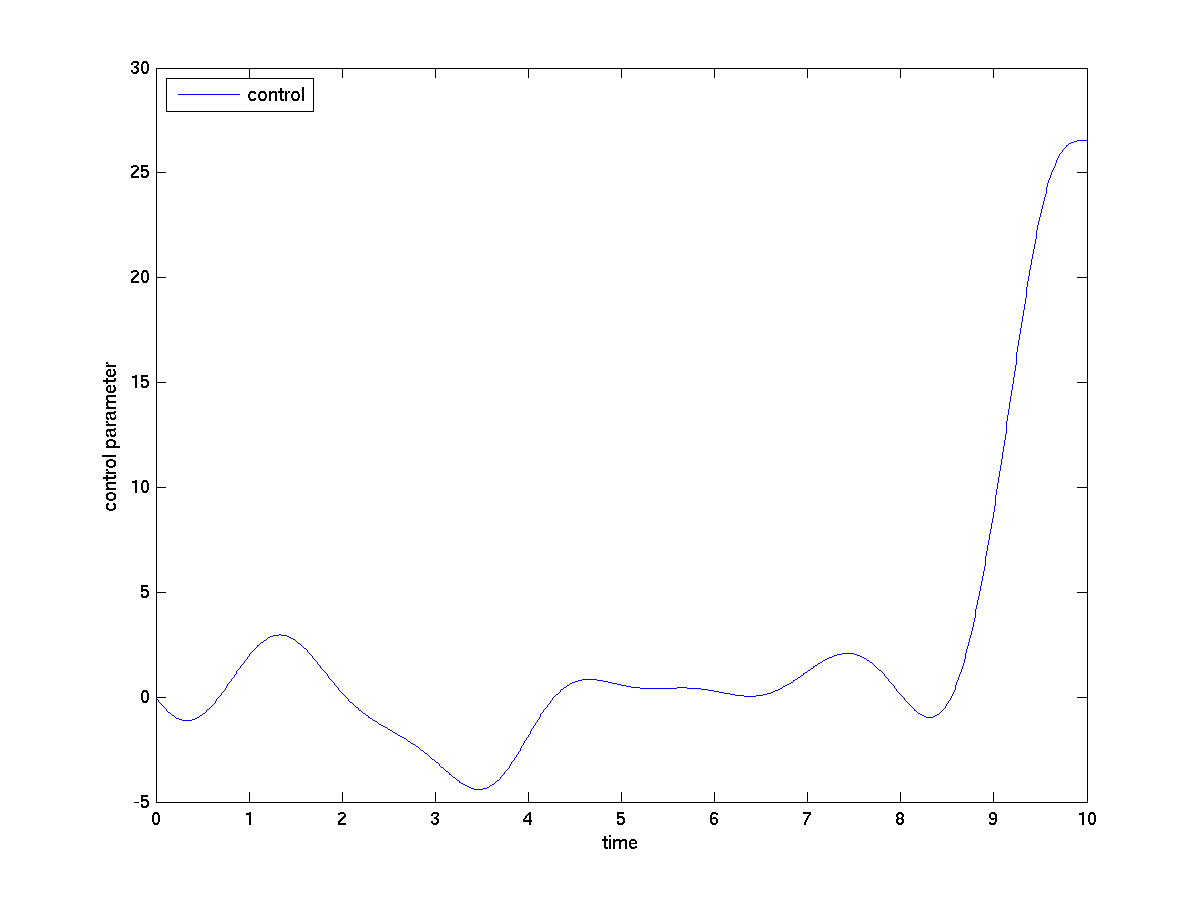}}
  \subfloat{\includegraphics[width=0.5\textwidth]{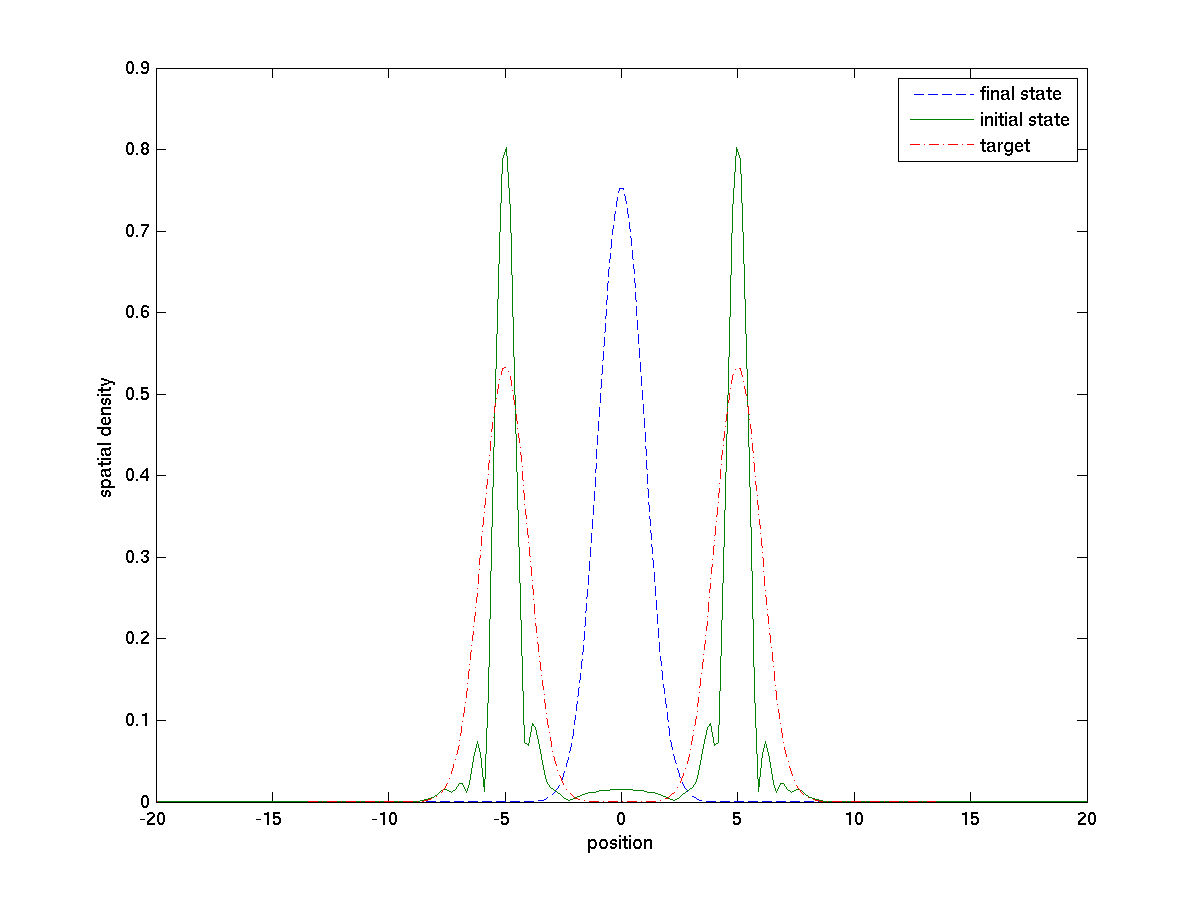}}
  \caption{Splitting a linear wave paket with $\gamma_1 = 0$}
  \label{fig:split_wave_pkt}
\end{figure}
If, instead, we choose $\gamma_1 = 4\times 10^{-5}$, $\gamma_2 = 1\times 10^{-9}$, we find 
\[\langle A\psi(T),\psi(T)\rangle_{L_x^2}^2 \approx 2.269\times 10^{-3},\]
and the corresponding solution is given in Figure~\ref{fig:split_wave_pkt2}. Here the intermediate state is a plot of $\rho(t)$ at $t=4=0.4\times T$.
\begin{figure}[ht]
  \centering
  \subfloat{\includegraphics[width=0.5\textwidth]{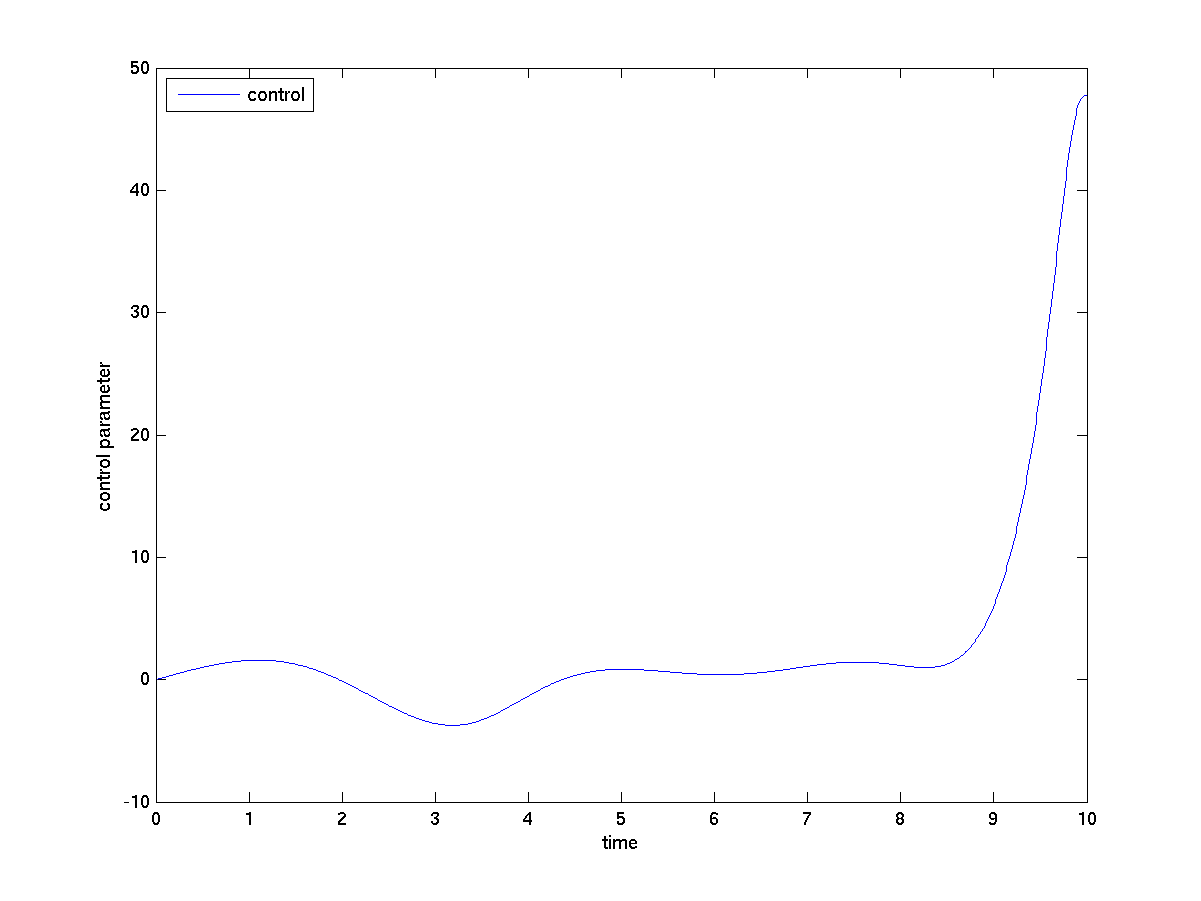}}
  \subfloat{\includegraphics[width=0.5\textwidth]{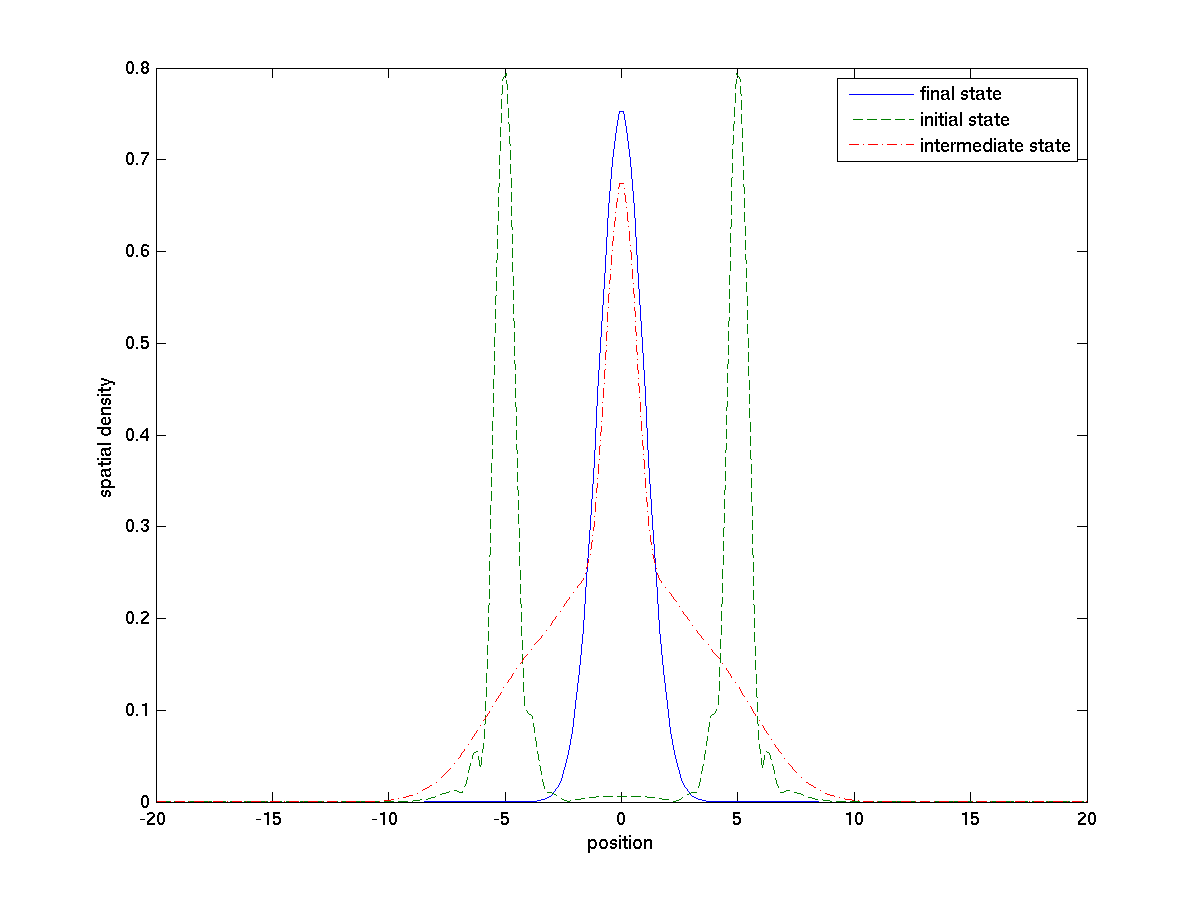}}
  \caption{Splitting a linear wave paket with $\gamma_1 > 0$}
  \label{fig:split_wave_pkt2}
\end{figure}

A direct comparison of the (spatial densities of the) resulting wave functions and the respective controls is given in Figure~\ref{fig:direct_comp}.
\begin{figure}[ht]
  \centering
  \subfloat{\includegraphics[width=0.5\textwidth]{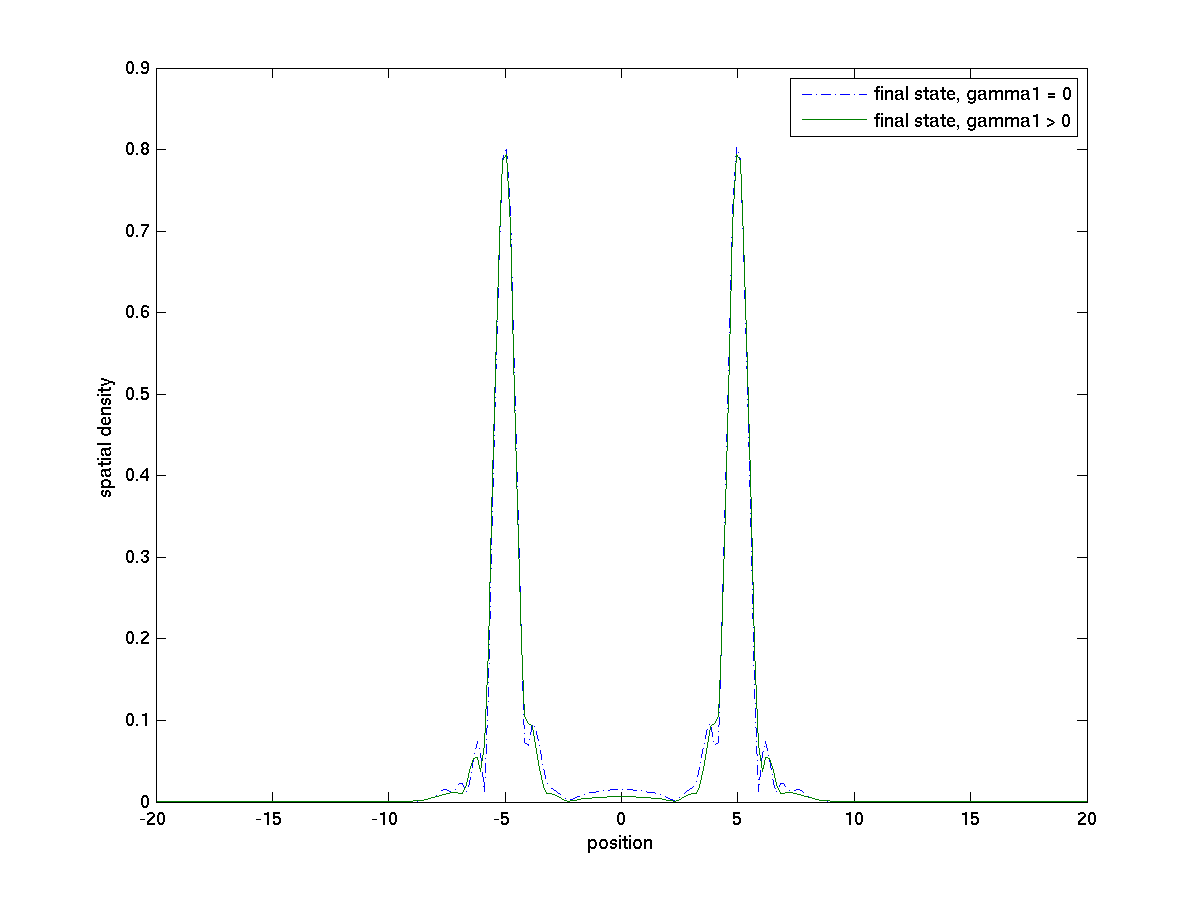}}                
  \subfloat{\includegraphics[width=0.5\textwidth]{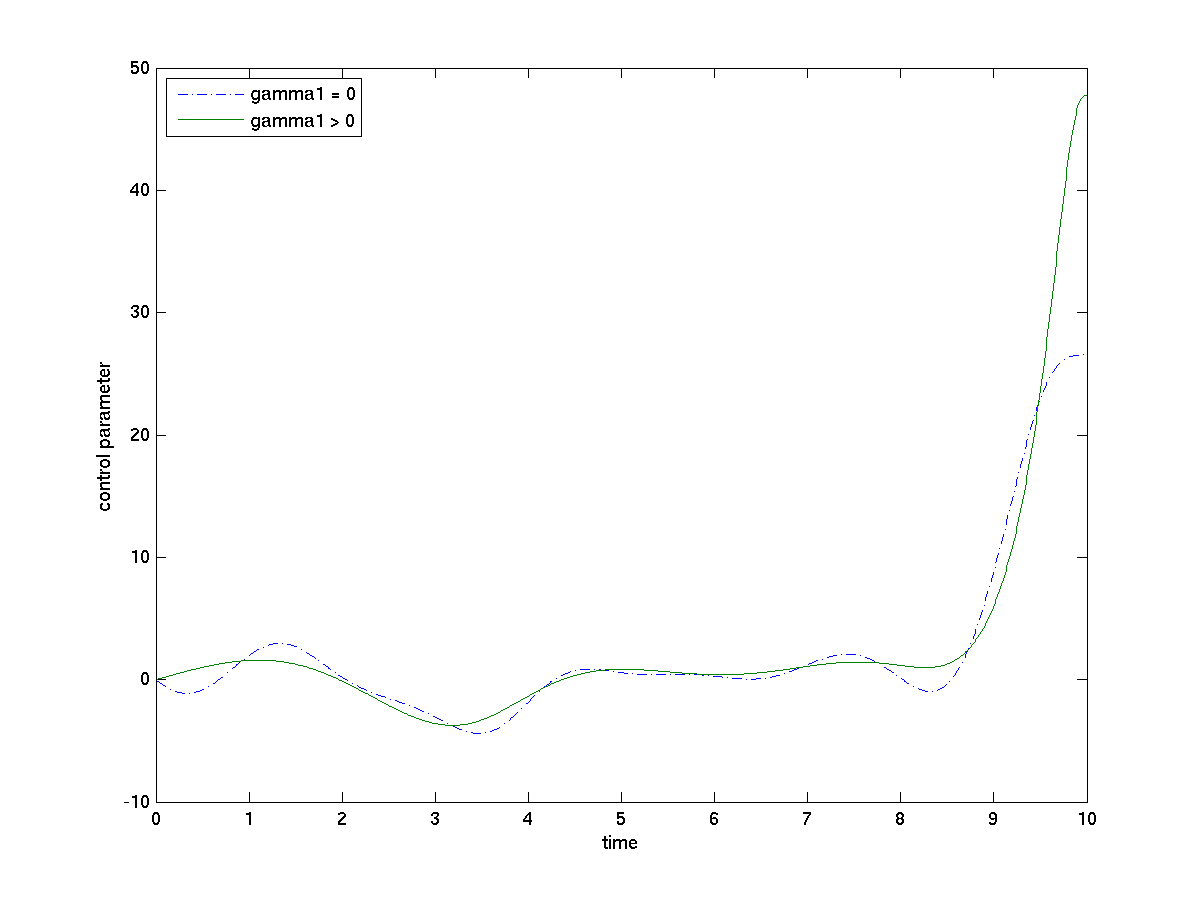}}
  \caption{Direct comparison between results}
  \label{fig:direct_comp}
\end{figure}
We see that the spatial densities are nearly identical, but the variability of the respective control parameters is not the same.
This is, of course, related to time--evolution of the weight factor $\omega(t)$, defined in \eqref{eq:weight}, which is shown in Figure~\ref{fig:int_V_psi} for the case of $\gamma_1 = 4\times 10^{-5}$ and $\gamma_2 = 1\times 10^{-9}$.\\
\begin{figure}[ht]
\begin{center}\includegraphics[width=0.5\linewidth]{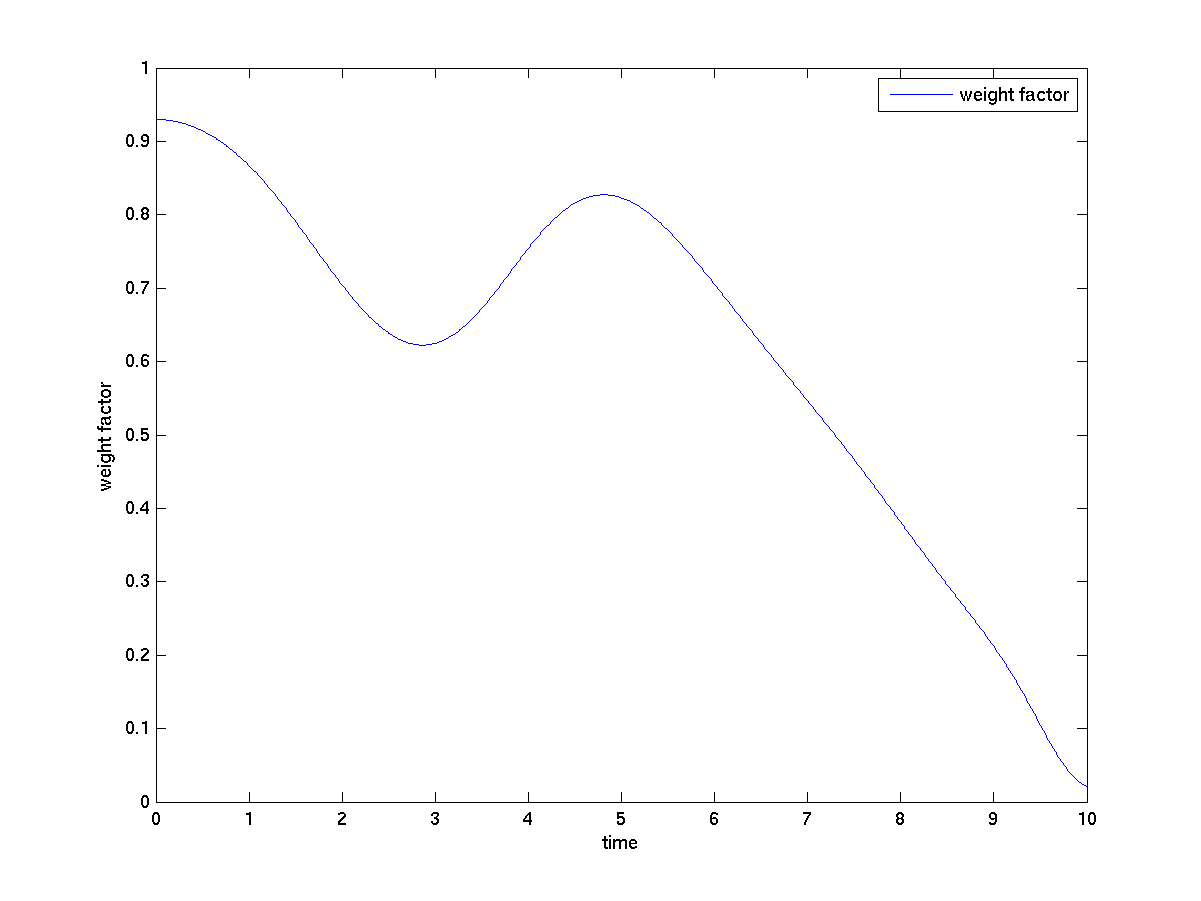}\end{center}
\caption{The weight factor $\omega = \int V |\psi|^2dx$ over time}
\label{fig:int_V_psi}
\end{figure}
By construction, the time--integral of $\omega(t)$ can be interpreted as the physical work performed during the control process. We find that compared to the case $\gamma_1>0$, the term $\|E(\cdot)\|_{L^2_t}^2$ is around $30\%$ larger (64.5 versus 49.1) and $\|\dot E (\cdot)\|_{L^2_t}^2$ is around twice as large (95.0 versus 43.4) in the case where $\gamma_1 = 0$, yielding a significant advantage of our control cost over terms considering the $H^1$-norm only; see \cite{schm} for the latter.

Finally, Figure~\ref{fig:J} shows an example of the evolution of the objective functional $J(\psi, \alpha)$ over the number of iterations of the Newton method, here for the case where $\gamma_1 = 0$.\\
\begin{figure}[ht]
\begin{center}\includegraphics[width=0.5\linewidth]{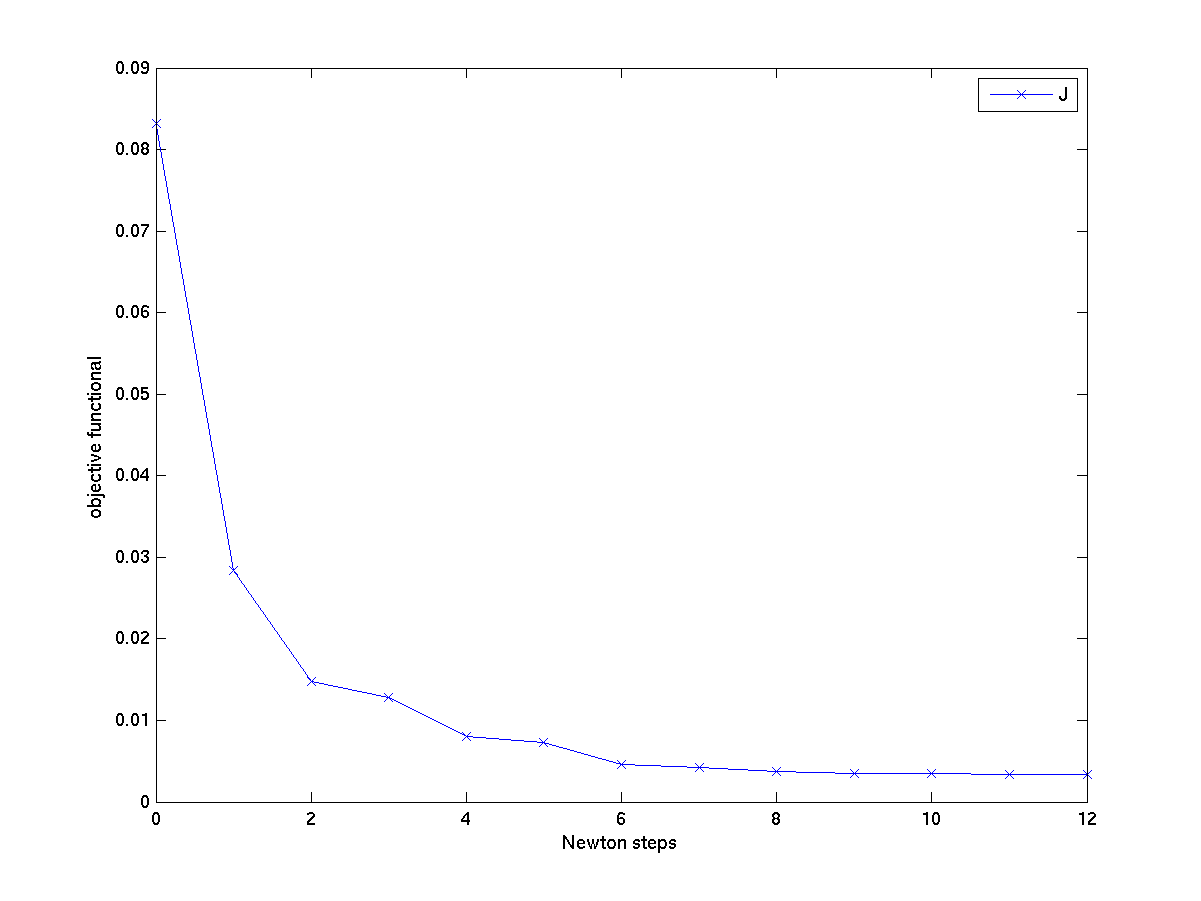}\end{center}
\caption{Value of $J(\psi, \alpha)$ over number of iterations}
\label{fig:J}
\end{figure}

\subsubsection{Example: splitting a Bose--Einstein condensate}
We consider the same situation as in the previous example, but with an additional (cubic) nonlinearity. More precisely, we choose $\lambda = 8 > 0$. It turns out that the conclusions are similar to the ones found in the linear case ($\lambda = 0$). Qualitatively, the main difference is that during the time--evolution, the wave function spreads out more because of the additionally repulsive (defocusing) nonlinearity. In the linear case, the widest extension of the wave packet is always comparable to its final value. 
Choosing as before $\gamma_1 = 4\times 10^{-5}$ and $\gamma_2 = 1\times 10^{-9}$, we obtain the solution depicted in the right plot of Figure~\ref{fig:split_cond}, where we show the spatial density at the times $t=0$, $t=T=10$ and at the intermediate time $t=4$. The control is shown in the left plot.
In comparison to the linear case ($\lambda = 0$), the observable term in the objective functional $J(\psi, \alpha)$ is found to be slightly larger. Indeed, we obtain $$\langle A\psi(T),\psi(T)\rangle_{L_x^2}^2\approx 3.720 \times 10^{-3}.$$ 
This seems to indicate that nonlinear effects counteract the influence of the control potential.\\
\begin{figure}[ht]
  \centering
  \subfloat{\includegraphics[width=0.5\textwidth]{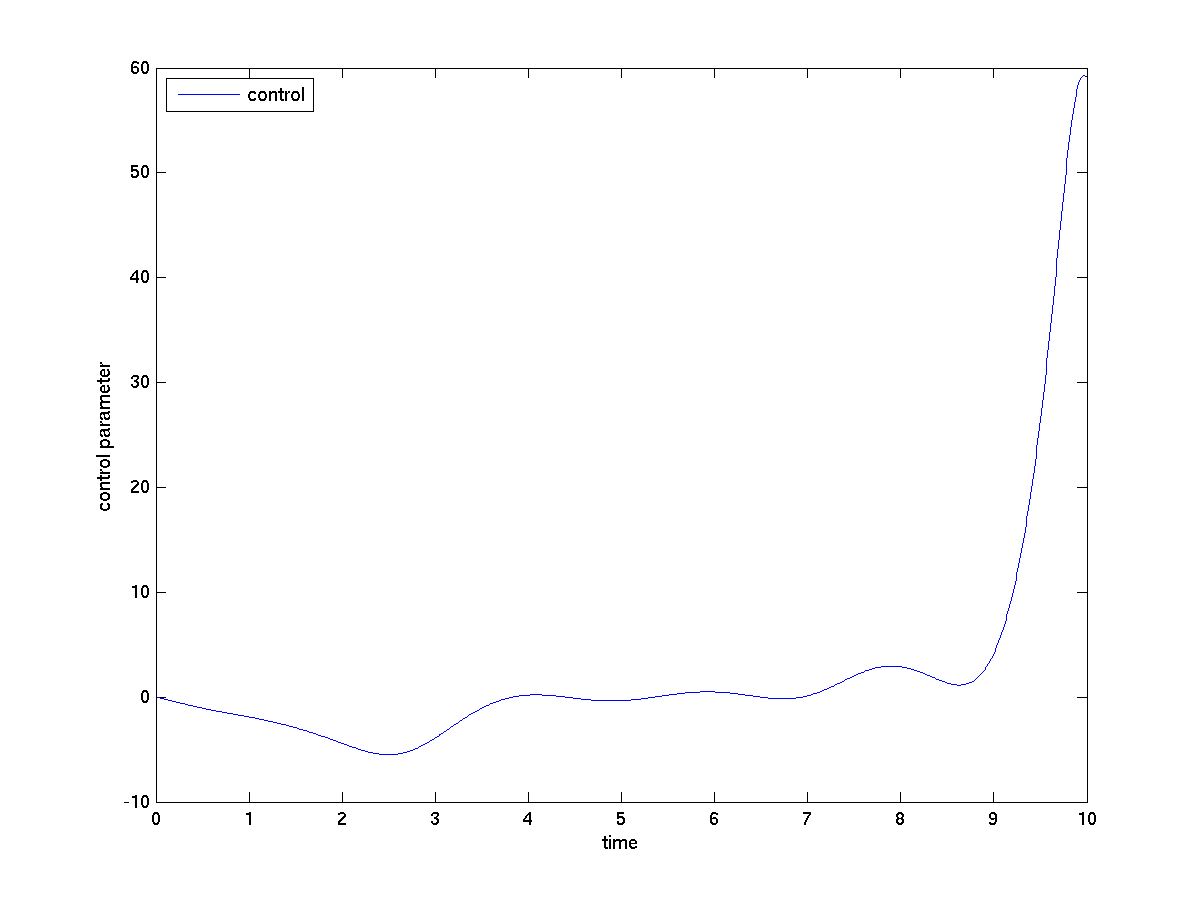}}
  \subfloat{\includegraphics[width=0.5\textwidth]{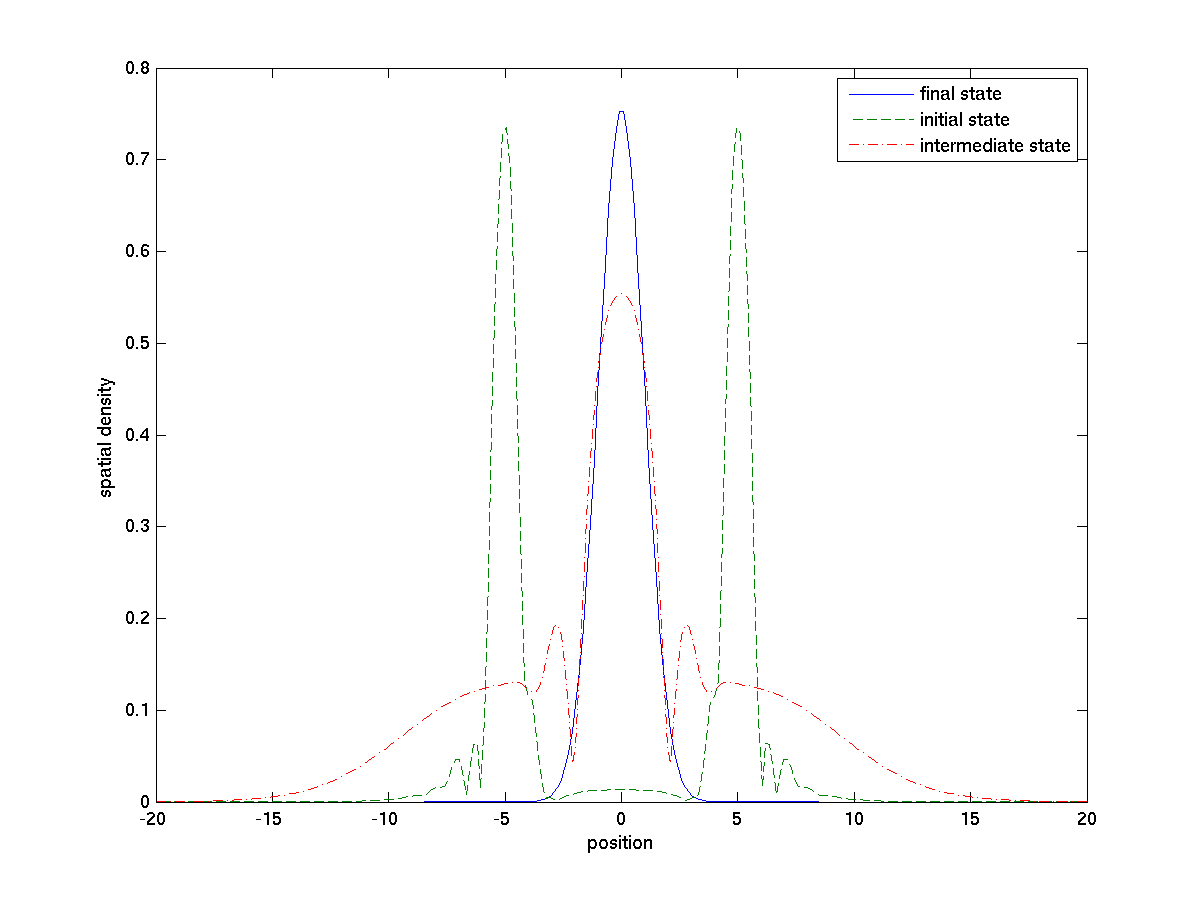}}
  \caption{Splitting a condensate with $\gamma_1 > 0$}
  \label{fig:split_cond}
\end{figure}

We again compare the present case with the one where $\gamma_1 = 0$ (i.e.\ no cost term proportional to the physical work) 
and $\gamma_2 = 1.5\times10^{-6}$. First, we find that $$\langle A\psi(T),\psi(T)\rangle_{L_x^2}^2\approx 3.382 \times 10^{-3}.$$ 
Moreover, $\|\dot{E}\|_{L_t^2}^2$ is about $150\%$ 
larger (172.1 versus 68.5) than in the case where $\gamma_1 \not =0$. Similarly, the total energy $\|E\|_{L_t^2}^2$ is around $15\%$ larger (91.8 versus 79.5). 

\subsubsection{Example: splitting an attractive Bose--Einstein condensate} 

Our numerical method allows us to go beyond the rigorous mathematical theory developed in the early chapters. In particular we may try to control the behavior of attractive condensates, which are modeled by \eqref{eq:schroed} with $\lambda < 0$, i.e.\ a focusing nonlinearity. Here we choose $\lambda=-1$, whereas the parameters $\gamma_1 = 4\times 10^{-5}$, $\gamma_2 = 1\times 10^{-9}$ are the same as before.
The results are shown in Figure~\ref{fig:focusing} (control in the left plot and the state at times $t=0,10,4$ in the right plot). The observable part of the objective functional satisfies \[\langle A\psi(T),\psi(T)\rangle_{L_x^2}^2\approx 2.143 \times 10^{-3}.\]\\
\begin{figure}[ht]
  \centering
  \subfloat{\includegraphics[width=0.5\textwidth]{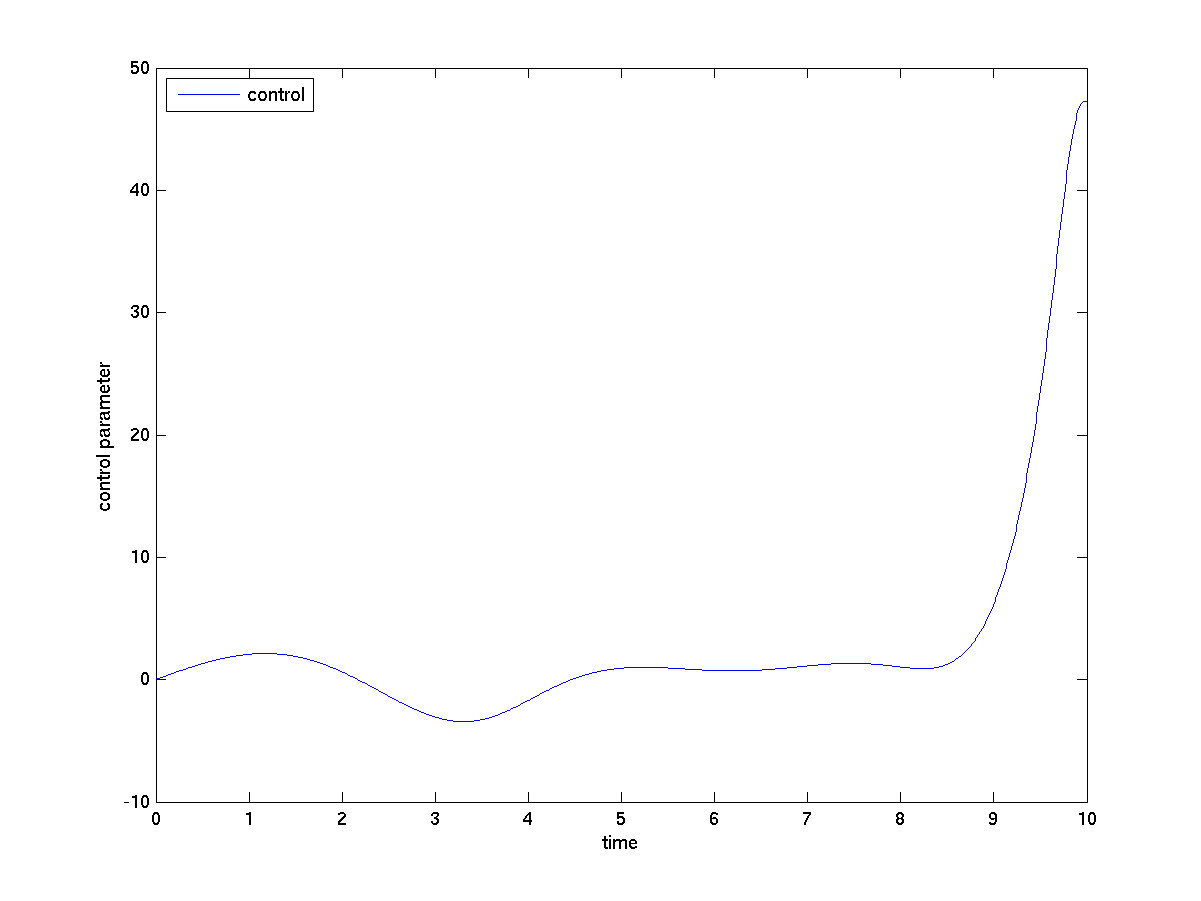}}
  \subfloat{\includegraphics[width=0.5\textwidth]{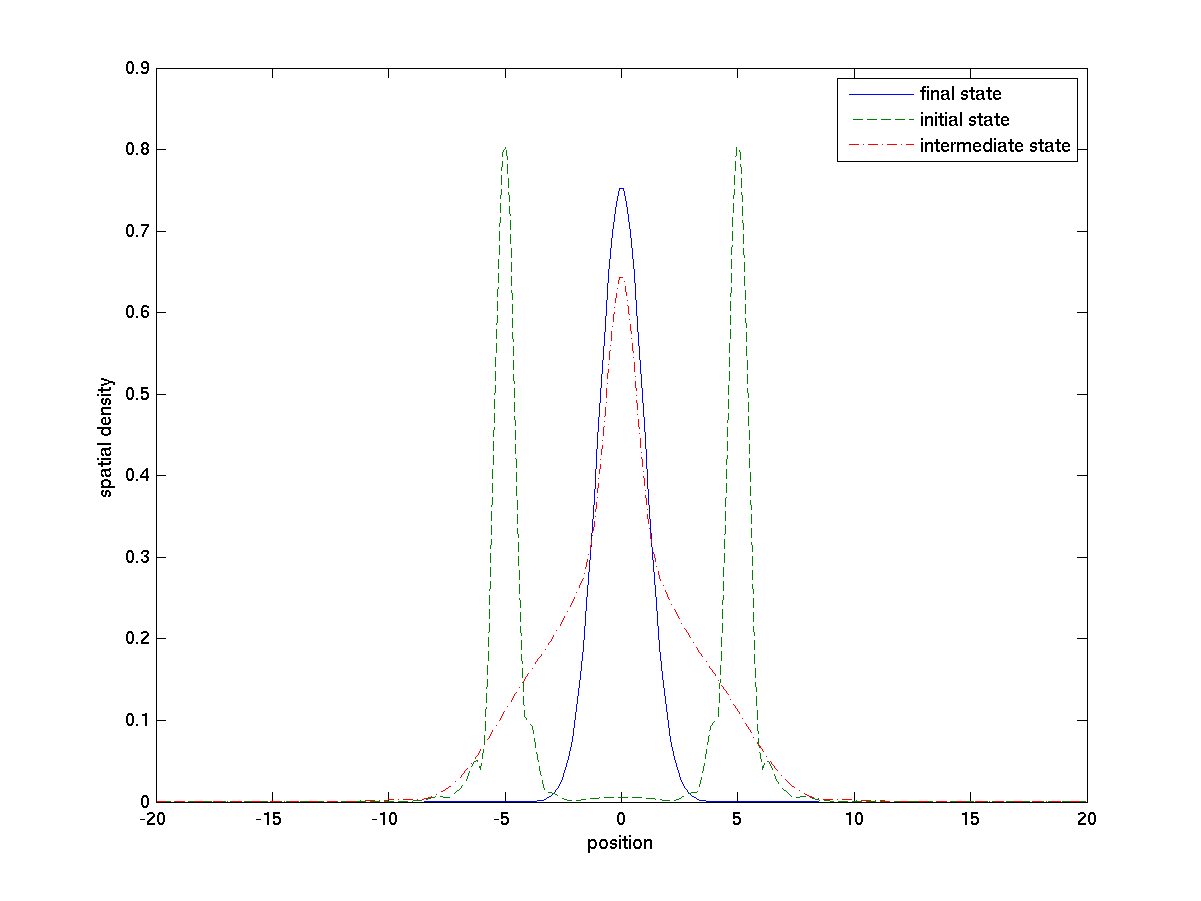}}
  \caption{Splitting a focusing condensate with $\gamma_1 > 0$}
  \label{fig:focusing}
\end{figure}

In comparison to the case of a repulsive (defocusing) nonlinearity the final value for the observable term $\langle A\psi(T),\psi(T)\rangle_{L_x^2}^2$ is much smaller, confirming the basic intuition that an attractive condensate does not tend to spread out as much as in the repulsive case.

\section{Concluding remarks}\label{sec:concl}

In this work, we have introduced a rigorous mathematical framework for optimal quantum control of linear and nonlinear Schr\"odinger equations of Gross-Pitaevskii type. 
We remark that in the physics literature, $L^2(\R^d)$ is usually considered as a complex Hilbert space, equipped with the inner product 
$\langle \varphi, \xi \rangle = \int_{\R^d} \varphi(x) \overline{\xi(x)} dx, $
whereas we consider $L^2(\R^d)$ as a real Hilbert space (of complex functions), equipped with \eqref{eq:inner_prod}. Note, however, that the expectation value 
of any physical observable $A$ and thus also $J(\psi, \alpha)$ is the same for both choices.

Let us briefly discuss possible generalizations for which our results remain valid. First, we point out that in our analysis above, we did not take advantage of the fact that $\gamma_1 > 0$ and hence all of our results remain true in the case $\gamma_1 = 0$. 
However, Example \ref{ex:splitlin} shows a significant quantitative difference in the behavior of the cost functionals with and without the term proportional to $\gamma_1$. 

Second, it is straightforward to extend our analysis to the case of {\it several} control parameters, i.e.
\[
{V}(t,x) = \sum_{k=1}^K \alpha_k(t) V_k(x), \quad K \ge 2.
\]
Clearly, for $V_k\in W^{m, \infty}(\R^d)$, $m\ge 2>d/2$, all of our results remain valid. In addition, it is not difficult to extend our framework to cases of 
{\it more general control potentials} $V(\alpha(t), x)$, not necessarily given in the form of a product. 
Such potentials are of physical significance; see cf.\ \cite{schm}. 
From the mathematical point of view, all of our results still apply provided that
$$\|V(\alpha, \cdot)\|_{W^{m,\infty}_x} \le C_1, \quad \|\partial^s_\alpha V(\alpha, \cdot)\|_{L^\infty_x} \le C_2, \ \forall |s| \le 2.$$ 
Note that in this case, the cost term in $J(\psi, \alpha)$, which is proportional to the physical work performed throughout the control process, reads
\[
 \int_0^T (\dot{E}(t))^2 dt  = \int_0^T (\dot{\alpha}(t))^2 \left( \int_{\R^d} \partial_\alpha V(\alpha(t),x) |\psi(t,x)|^2 \; dx \right)^2 dt.
\]

It is more problematic to provide a rigorous mathematical framework for control potentials $V(\alpha,x)$ which are {\it unbounded} with respect to $x\in \R^d$. 
Only in the case where $V(\alpha, x)$ is subquadratic with respect to $x$ and in $L^\infty(\R^d)$ with respect to $\alpha$, existence of a minimizer can be proved along the 
lines of the proof of Theorem~\ref{thm:ex_min}. More general unbounded control potentials $V(\alpha,x)$ definitely require new mathematical techniques. Note that in this case, even the existence of solutions to the nonlinear Schr\"odinger equation is not obvious.

Finally, we want to mention that it is possible to extend our results (with some technical effort) to the case of {\it focusing nonlinearities}, $\lambda < 0$, provided $\sigma < 2/d$. The latter prohibits 
the appearance of finite-time blow-up in the dynamics of the Gross--Pitaevskii equation. Clearly, the optimal control problem ceases to make sense if the solution to the 
underlying partial differential equation no longer exists.

\end{document}